\def\N{{{\mathbb N}}}
\def\Z{{{\mathbb Z}}}
\def\Q{{{\mathbb Q}}}
\def\R{{{\mathbb R}}}
\def\C{{{\mathbb C}}}
\def\P{{{\mathbb P}}}
\def\O{{{\mathcal O}}}
\def\1C{{{\mathcal C}}}
\def\E{{{\mathcal E}}}
\def\k{{{\kappa}}}
\def\d{{\rm d}}
\def\v{{\bf v}}
\def \e{{\varepsilon}}
\def\c{{{ c}}}
\DeclareMathOperator{\ord}{ord}
\theoremstyle{plain} 
\newtheorem{theorem}{\indent\sc Theorem}[section]
\newtheorem{lemma}[theorem]{\indent\sc Lemma}
\newtheorem{corollary}[theorem]{\indent\sc Corollary}
\newtheorem{proposition}[theorem]{\indent\sc Proposition}
\theoremstyle{definition} 
\newtheorem{definition}[theorem]{\indent\sc Definition}
\newtheorem{remark}[theorem]{\indent\sc Remark}
\newtheorem{example}[theorem]{\indent\sc Example}
\author{Pietro Corvaja, Julian Demeio, David Masser and Umberto Zannier}
\title{ On the torsion values for sections of an elliptic scheme}
\begin{document}

\date{}\maketitle

\noindent{\sc Abstract.}   We shall consider sections of an elliptic scheme $\E$  over an affine  base  curve $B$, and study the points of  $B$  where the section takes a torsion value. In particular, we shall relate the distribution in $B$ of these points with the canonical height of the section, proving an integral formula involving a measure on $B$ coming from the so-called Betti map of the section. We shall show that this measure is the same one which appears in dynamical issues related to  the section.

This analysis will  also involve the {\it multiplicity}  with which a torsion value is attained, which is an independent problem. We shall prove finiteness theorems for the points where the multiplicity is higher than expected. Such multiplicity has also a relation with Diophantine Approximation and quasi-integral points on $\E$ (over the affine ring of $B$), and in sections \ref{Sec4} and \ref{cmb} of the paper we shall exploit this viewpoint, proving an effective result in the spirit of Siegel's theorem on integral points. 













\begin{section}{Introduction}

    In this paper we deal with an elliptic scheme $\mathcal{E}\to B$ over an affine algebraic  curve $B$,     and defined for instance  by a Weierstrass equation
    \begin{equation}\label{E.W}
  \mathcal{E}:\, y^2 = (x-\alpha_1)(x-\alpha_2)(x-\alpha_3),
\end{equation}
where $\alpha_i$, $i=1,2,3$ 
 lie in $ \C(B)$, the origin being the point at infinity.  

For $b\in B$, we let $\mathcal{E}_b$ be the fiber over $b$. We can suppose that  
the $\alpha_i$ are regular in $\C(B)$ such that
the  discriminant of the cubic  polynomial  on the right of  \eqref{E.W}  never vanishes on $B$ (i.e. $(\alpha_i-\alpha_j)$ do not vanish for $i\neq j$), so that $\mathcal{E}_b$ will really be an elliptic curve for each $b$.

We shall disregard the case when this scheme is isotrivial, which is equivalent to the $j$-invariant of the elliptic curve being constant. Morevoer, at some points we shall also suppose that  the elliptic scheme is defined over the field of algebraic numbers. \smallskip

A  nonzero section $\sigma:B\to\E$ of this scheme can be written as $\sigma=(x_\sigma,y_\sigma)$ where $x_\sigma,y_\sigma \in\C(B)$ satisfy the above equation. 

Our main interest lies in what we call the   {\it torsion points}  of $\sigma$, namely the set of points of the base $B$ where $\sigma$ takes a value which is torsion in the appropriate fiber: 
\begin{equation}\label{E.T}
  T_\sigma = \{ b\in B 
  \, |\, \exists n>0, n\cdot  \sigma(b) =0 \}.
\end{equation}

Of course one usually refers to torsion points as points of $\E$, however our present terminology should not risk to create any confusion.


\subsection{Generalities on $T_\sigma$}

\underline{\it $T_\sigma$ is large}:  
We pause to remark that $T_\sigma$ is always infinite. When $\sigma$ is torsion as a section, of course $T_\sigma=B$, and  this is a trivial case usually disregarded here, so assume that $\sigma$ is non-torsion.
Then the issue  is maybe less obvious then it may seem. Here are some arguments.

Writing $n\cdot \sigma=(x_{n\sigma}, y_{n\sigma})$, a torsion point (in our sense) corresponds to a pole in $B$ of some $x_{n\sigma}$. Saying that $T_\sigma$ is infinite amounts to the fact that the set of these poles is  infinite as $n$ varies.  Siegel's theorem over function fields implies this fact and more. Or we can use the $abc$ inequality over function fields, after observing that   
 for each $n>0$  the functions $x_{n\sigma}$  have all poles in $T_\sigma$ and the functions $x_{n\sigma}-\alpha_i$  have all zeros in $T_\sigma$.    (See for instance  \cite{CZ} and    \cite{Z} for a bit more detail.  This link with Siegel's theorem will be central in Section \ref{Sec4}).

There is also an analytic argument employing the so called Betti map, which will be central in this paper; this gives even the stronger result that the torsion points are dense for the complex topology of $B$.\footnote{As proved in \cite{LZ}, this is not true for the $p$-adic topology.}  (See again \cite{CZ} for more on this, and  \cite{Z} for a further argument using reduction modulo $p$.)

A recent paper of De Marco-Mavraki \cite{DMM}, in the case when $\overline{\mathbb{Q}}$ is a field of definition, proves even a property of  Galois equidistribution of such points. Roughly speaking this asserts that there is a probability measure on the base $B$ such that, as the torsion order grows to infinity, the Galois conjugates of a torsion point `tend' to be equidistributed in $B$ with respect to this measure, i.e. every prescribed sufficiently regular region contains a percentage of conjugates approximately equal to its measure. 
 They work even with $p$-adic  valuations, but if we stick to the archimedean case, we shall see later how the measure in this case comes  naturally from the Betti map, so their results very well fit into our study. 

\medskip

\underline{\it $T_\sigma$ is small}: In the opposite direction, one may prove that these points are {\it sparse} in certain meanings. For instance, when the scheme and the sections are defined over $\overline\Q$, a theorem of Silverman-Tate predicts bounded height for such points (provided the section is not torsion), hence their degree  (over $\Q$) tends to infinity.  (See \cite{Z}, especially Appendix C by the third author, for a self-contained proof.) We may add to this $p$-adic integrality constraints of the type appearing in the theorem of Lutz-Nagell (see  \cite{silverman}, VIII, \S 7.)

And if we seek points which are torsion simultaneously for another section which is linearly independent with the former, then one may prove their  finiteness over the whole algebraic closure, i.e. disregarding their degree. See e.g. the paper \cite{CMZ}, which extends to $\C$ such results, previously proved over $\overline\Q$ in papers quoted therein, and see also \cite{Z}, Ch. 3. (These finiteness theorems are cases of the so-called {\it Pink-Zilber conjecture}.)

\subsection{Our issues}  

\subsubsection{Multiplicities} We have just remarked that  the poles of the rational functions $x_{n\sigma}$ for varying $n$ are the elements of $T_\sigma$. So, if for instance we are interested in {\it counting}  the elements of  $T_\sigma$ of a given torsion order $m$, and if we want to do that by comparison with the degree $\deg x_{m\sigma}$, then we must take into account the {\it multiplicity} with which they appear as poles of  $x_{m\sigma} $.  

Now, the coordinate function $x$ for a Weierstrass equation of an elliptic curve has a pole of order $2$ at infinity, hence the  multiplicity  for the said poles will be even. But we should not expect  multiplicity higher than $2$, except for special points. 

\medskip 

Thus an issue here is to  seek to {\it bound from above} such multiplicity, and to {\it   discover  the distribution} of torsion points on $B$ where high multiplicity occurs. 



This  will be the first of the problems discussed in the paper. We shall start by describing the multiplicity in different equivalent terms. As stated in  Lemma \ref{L.local} below, this multiplicity corresponds to the local multiplicity intersection of the image of the section with a torsion curve (i.e. defined by an equation $np=0$, $p\in \E$, for an integer $n>0$).

We shall analyse the multiplicity by means of the Betti map of the section, recalled in \S \ref{S.Betti}. This shall lead to finiteness theorems for the points in $T_\sigma$ where the multiplicity is higher than expected, and also to a bound for the maximal multiplicity. Actually, the analysis will prove a stronger result on letting the section vary through the full Mordell-Weil group of sections. \footnote{Very recently D. Ulmer and G. Urzua kindly   informed us about their preprint \cite{UU} in which, among other things,  they proved the finiteness instance in the case of a cyclic group, using somewhat different methods.}

We also note that a high multiplicity  at a point $b\in B$, as a pole of $x_{n\sigma}$, means that $n\sigma$ is near to infinity (i.e. the origin of $\E$)  in the valuation of $\C(B)$ associated to $b$. This fact  links the issue with Diophantine Approximation over function fields. Then we shall take this viewpoint in section \ref{Sec4} (see below for more).

\subsubsection{Canonical height of a section,  torsion points and the Betti measure }

We shall also investigate the distribution of $T_\sigma$ in regions of $B$; this will be done by  counting asymptotically the points of order $n$ with varying $n$. 

After proving that the multiplicity at these points will be generally what is expected (i.e. $2$ as a pole of  $x_{n\sigma}$), the counting of points will give essentially the degree of $x_{n\sigma}$, which is asymptotically $n^2\hat h(\sigma)$, where $\hat h$ is a canonical height associated to twice  the divisor at infinity on $\E$.

But the counting of points can be also done through the Betti map. For given $n$, we have to count the number of points where the Betti map takes a rational value with denominator (dividing) $n$. For large $n$ this will be essentially $n^2$ times the area of the base, with respect to the measure obtained locally by pulling back the Lebesgue measure on $\R^2$ by the Betti map.

By comparing these two approaches, we shall also obtain a certain integral formula for the canonical height of the section.

\medskip  

It is to be noted that, beyond the control on multiplicities, we shall need control on the behaviour of the Betti map near the points  of bad reduction of $\E$ (for instance, $0,1,\infty$ for the Legendre elliptic scheme). This will be done by using that the Betti map is {\it definable}, as recently proved by G. Jones and H. Schmidt \cite{JS}.  (This definability will be used actually also in the previous part of the paper.) To show that the counting can be done through the usual comparison with an area it will be necessary to use a further result by Barroero and Widmer \cite{BW} proving the required asymptotic for definable maps.

We shall also indicate other possible methods for proofs of these results.

This section will further contain an Appendix, showing that the measure induced by the Betti map is the same as the one used by De Marco and Mavraki in the above quoted context. As a byproduct of this verification, we shall obtain a certain characterisation of  the possible measures which could appear {\it a priori}.



\subsubsection{Effective analysis of integral points on  $\E$ and multiplicities again}

As mentioned above,  the issue of multiplicities for the poles of the rational  functions $x_{n\sigma}$, for $n\in\N$, are related to diophantine approximation and quasi integral points in the function field of $B$.  In Section \ref{Sec4} of the paper we will  exploit this viewpoint.

We will first prove a version of effective Roth's theorem over function fields, by adapting a method of J. Wang \cite{W}.  Then we shall use Siegel's method for quasi-integral points on an elliptic curve, to deduce the sought approximation result.

Concerning multiplicities, this will yield results in a sense  much weaker than those coming from the Betti map if applied to varying elements in a prescribed finitely generated group of sections. However these results will be stronger if applied individually to an arbitrary section. Morevoer, they will be completely effective and uniform, and  admit applications  beyond  what follows from  the former methods.   
This analysis in accomplished by examples illustrating the conclusions in all details.

\end{section}



\section{The Betti map}\label{S.Betti}

\begin{definition}\label{bettidef}
	Let $\pi:\mathcal{E} \rightarrow B$ be an elliptic scheme (with $B/\C$ a smooth projective curve) with bad reduction locus $S \subset B$, and let $D \subset B \setminus S$ be a simply connected domain. Let $(\rho_1(\lambda),\rho_2(\lambda)), \ \lambda \in D$, be a holomorphic choice of periods for the elliptic logarithm. For any $P \in \pi^{-1}(D)$, we call the \textit{Betti coordinates} of $P$, and denote them by $\beta(P) := (\beta_1(P),\beta_2(P)) \in (\R/\Z)^2$, the unique elements of $\R/\Z$, such that the following equality holds:
	\begin{equation}\label{Eq:Betti_def}
	\log^{ab}(P)=\beta_1(P)\rho_1(\pi(P))+\beta_2(P)\rho_2(\pi(P)),
	\end{equation}
	where $\log^{ab}$ denotes the abelian logarithm multivalued function. We call the \textit{Betti map} the (real analytic) map that associates to a point $P$ its Betti coordinates.
\end{definition}

We note that, although for the Betti map $(\beta_1,\beta_2)$ to be well-defined  one needs to restrict oneself to a simply connected domain, the $1$-forms $\d \beta_1$, $\d \beta_2$, which are going to appear commonly in this paper, are well-defined on $\pi^{-1}(D)$, for any simply connected open $D \subset \P_1 \setminus S$. Moreover, the $2$-form $\d \beta_1 \wedge \d \beta_2$ is well-defined on $\mathcal{E} \setminus \pi^{-1}(S)$, i.e. it is independent of the local choice of periods.

\smallskip

If we happen to have a specific section $\sigma:B \rightarrow \mathcal{E}$, we sometimes call, with a slight abuse of notation, when there is no risk of confusion, the Betti coordinates of a point $P \in B$, the Betti coordinates of the point $\sigma(P) \in E_B$ (after a specific choice of a simply connected domain $D \subset B$ has been made). It may be continued to all of $B(\C)$, with monodromy transformation that we forget about at the moment. 
(See e.g. \cite{CMZ} for a simple description sufficient for our task here, and see \cite{ACZ} for much  more precise information about the Betti map, in any dimension.)

For a more detailed exposition about the Betti map, see e.g. \cite[Section 1.1]{CMZ}.



\begin{remark}\label{Rmk:Surjectivity}
	The Betti map of a non-torsion section is proved to be generically submersive (see \cite{Z}, \S 2.5 or \cite{ACZ}, \cite{CMZ}); however it is not necessarily surjective to $(\R/\Z)^2$. In fact, for every positive integer $N$ there exists always an (algebraic) section of the Legendre scheme such that the Betti map associated to it ``misses'' all the rational points with denominator dividing $N$. One such example is any algebraic section $\sigma_N$ such that $[N] \cdot \sigma_N=\sigma_0$, where, in the standard Weierstrass notation of the Legendre scheme (recalled in Section \ref{Sec:Legendre}), $\sigma_0(\lambda)=(2,\sqrt{2(2-\lambda)})$.
\end{remark}

\begin{definition} \label{D.multB}  We define the multiplicity of $\beta$ at a point $b\in B(\C)$ as the minimum order of a partial derivative of $\beta$ which does not vanish at $b$. This notion is clearly independent of the local determination for the Betti map.
\end{definition}

For completeness, we recall from \cite{CMZ} that the rank of the (differential of the) Betti map (at any point)  is always even, hence in our case it is $0$ or $2$. 

We immediately prove a  result concerning the multiplicity.

\begin{proposition}  \label{P.finite}  Notation being as above, assuming that $\sigma$ is not torsion, the set  of $b\in B(\C)$ such that the multiplicity  at   $b$  of the Betti map for $\sigma$ is $>1$, is a  finite set.
\end{proposition}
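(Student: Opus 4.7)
The plan is to combine the o-minimal definability of the Betti map (Jones--Schmidt, \cite{JS}) with the identity theorem for holomorphic functions: definability reduces the claim to showing that the multiplicity-$>1$ locus contains no $1$-dimensional real-analytic piece, and a direct computation then transfers such a hypothetical piece into a global holomorphic identity that forces the Betti map of $\sigma$ to be constant, contradicting the recalled fact that non-torsion sections have generically submersive Betti maps.

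First, since the rank of $d\beta$ is always even (0 or 2), $\mathrm{mult}_b(\beta)>1$ is equivalent to $d\beta_1(b)=d\beta_2(b)=0$; call this locus $M$. By \cite{JS} the Betti map is definable in an o-minimal structure, hence so is $M$. Suppose for contradiction that $M$ is infinite; by o-minimality, $M$ contains a one-dimensional arc $C$, along which $\beta_1,\beta_2$ are constant, say $\beta_i\equiv\beta_i^0$. Work on the universal cover $\tilde B$ of $B\setminus S$, where the abelian logarithm $L:=\log^{ab}(\sigma)$ and the periods $\rho_1,\rho_2$ are single-valued holomorphic functions of $\lambda$, and $L=\beta_1\rho_1+\beta_2\rho_2$. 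Since $L$ is holomorphic in $\lambda$ and each $\beta_i$ is real-valued, $d\beta_i=0$ at a point forces $\partial_\lambda\beta_i=0$ there; applying $\partial_\lambda$ to the relation above, on the lifted arc $\tilde C$ one obtains
\[
L'(\lambda)=\beta_1^0\rho_1'(\lambda)+\beta_2^0\rho_2'(\lambda).
\]
The function $h(\lambda):=L'(\lambda)-\beta_1^0\rho_1'(\lambda)-\beta_2^0\rho_2'(\lambda)$ is holomorphic on $\tilde B$ and vanishes along the real-analytic curve $\tilde C$; by the identity theorem, $h\equiv0$ on $\tilde B$. Integrating gives $L=\beta_1^0\rho_1+\beta_2^0\rho_2+c$ for some $c\in\C$, and evaluating at any point of $\tilde C$ (where $L=\beta_1^0\rho_1+\beta_2^0\rho_2$) forces $c=0$. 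Hence $L\equiv\beta_1^0\rho_1+\beta_2^0\rho_2$, i.e.\ the Betti map of $\sigma$ is constantly $(\beta_1^0,\beta_2^0)$. But the Betti map of a non-torsion section is generically submersive (see \cite{CMZ}, \cite{ACZ}), in particular non-constant---contradiction.

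The main obstacle is the passage from the real-analytic condition $d\beta_i=0$ on the arc $C$ to a global holomorphic identity on $\tilde B$. The key point is that on $\tilde C$ the derivatives $\partial_\lambda\beta_i$ drop out of $\partial_\lambda L$, leaving a purely holomorphic function of $\lambda$; this is what opens the door to the identity theorem and to the global relation that forces the Betti map to be constant.
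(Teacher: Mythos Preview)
Your proof is correct and follows essentially the same strategy as the paper's: use definability of the Betti map (Jones--Schmidt) to extract an arc from a putatively infinite multiplicity locus, observe that $\beta$ is constant along it, then propagate this via holomorphic analytic continuation to conclude the Betti map is globally constant, contradicting the non-torsion hypothesis. The paper's version is organised slightly differently---it handles compact regions using only real-analyticity (reserving definability for triangular sectors at the boundary), invokes the complex-analyticity of the fibers of $\beta$ directly rather than passing through $L'$, and closes with Manin's theorem instead of generic submersivity---and in fact your detour through the derivative is unnecessary: the function $L-\beta_1^0\rho_1-\beta_2^0\rho_2$ is already holomorphic and vanishes on $\tilde C$, so the identity theorem applies to it immediately.
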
 

\begin{proof} Let us first deal with the points in a given compact region $K$ in $B(\C)$, assuming that $K$ is the closure of an open connected subset of $B$, where a determination of the Betti map $\beta$ is well-defined.   If $K$ contained infinitely many points of   multiplicity $>1$  for $\beta$, then $K$ would contain a whole real analytic arc of such points, 
i.e. where the differential $\d \beta$ vanishes. But then $\beta$ would be constant along this curve. Since the fibers of $\beta$ are analytic, $\beta$ then would be constant on $K$, and hence on $B$. By Manin's theorem, $\sigma$ would then be torsion. We conclude that $K$ contains only finitely many of the points in question. 

This suffices to cope with any portion of $B$ in the complement of any neighborhood of the boundary. It will suffice then to deal with any small disk around a boundary point $p_0\in \overline B-B$. We can cover this disk with finitely many triangular sectors, with a vertex in $p_0$, and such that $\beta$ is well-defined on each of the sectors. We now use that on each such sector $\beta$ is {\it definable} in the structure $\R_{an,exp}$.  This has been proved in \cite{JS} for the Legendre scheme, but then it holds also in our situation, on going to a Legendre model of our scheme (after suitable base-change). Then the derivatives of $\beta$ are also definable; this entails that the set of multiple points in a triangular sector is also definable, hence either is finite or contains a real-analytic arc; now the above argument again applies. 
\end{proof}

\begin{remark} If we deal with a scheme defined over $\overline\Q$, and if our  multiple points are also {\it torsion points} for $\sigma$ (which are those primarily of interest in this paper) then another argument is possible, avoiding the definability. Namely, one first remarks that  the degree of the field of definition of these points must tend to infinity, e.g. by Silverman's bounded height theorem. Then, again by bounded height, for a given point of `large' degree, its conjugates must fall in positive percentage outside a `small' neighborhood of the boundary points. This allows the argument in the first part of the lemma to apply. (See also \cite{Z} for another similar use of bounded height.)
\end{remark}

 \medskip
 
 Now we would like now to introduce a definition of multiplicity {\it at torsion points}  of $\sigma$, associated directly to the section $\sigma$ rather than to its Betti map.  
 
 \begin{definition}\label{D.multS} Let $b$ be a torsion point for $\sigma$ of order (dividing) $n$. We define the multiplicity of $\sigma$ at $b$, denoted $m_\sigma(b)$, as  the local multiplicity  of the intersection at $\sigma(b)\in \mathcal{E}$ between the curves $\sigma(B)$ and the torsion divisor $\mathcal{E}[n]$. 
 \end{definition}

{\it A priori} this depends as well on the $n$ such that $n\sigma(b)=O$. However the lemma which follows shows that  $m_\sigma(b)$ equals the previously considered multiplicity, and in particular does not depend on the said  integer $n$. 

\begin{lemma}\label{L.local}
Let $\sigma:B\to\E$ be a non-torsion section and  let $b\in T_\sigma$ be a torsion point for $\sigma$. Then the local multiplicity  $m_\sigma(b)$  
equals the multiplicity for the Betti map $\beta_\sigma$ at the point $b$.
\end{lemma}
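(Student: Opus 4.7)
The plan is to translate the intersection-theoretic quantity $m_\sigma(b)$ into the order of vanishing of a single holomorphic function on a disk around $b$, and then identify that order with the Betti multiplicity via the $\R$-linear independence of the periods. First I would choose a simply connected neighbourhood $D\subset B\setminus S$ of $b$ with a local complex coordinate $t$, $t(b)=0$, a holomorphic frame $\rho_1(t),\rho_2(t)$ of periods on $D$, and a single-valued determination of $\log^{ab}$ along $\sigma$. Since $\mathcal{E}[n]\to B$ is étale over $B\setminus S$, exactly one local branch of $\mathcal{E}[n]$ passes through $\sigma(b)$: it is the graph of a holomorphic section $\tau:D\to\mathcal{E}$ with $n\tau=0$ whose Betti coordinates are constant and equal to $\beta(\sigma(b))=(a_1/n,a_2/n)$. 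Because the abelian logarithm is a local biholomorphism on each fibre, $(t,u)$ with $u:=\log^{ab}(\cdot)-\log^{ab}(\sigma(b))$ gives holomorphic coordinates near $\sigma(b)$ in $\mathcal{E}$, in which both $\sigma(B)$ and $\tau(D)$ are graphs. Hence
\[
 m_\sigma(b)=\ord_{t=0}\,g(t),\qquad g(t):=\log^{ab}(\sigma(t))-\log^{ab}(\tau(t))=f_1(t)\rho_1(t)+f_2(t)\rho_2(t),
\]
where $f_i(t):=\beta_i(\sigma(t))-a_i/n$ is real-analytic in $(\Re t,\Im t)$ and vanishes at $t=0$.

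Next I would pin down the leading Taylor term of $g$. By Definition~\ref{D.multB} the Betti multiplicity of $\sigma$ at $b$ is $m=\min_{i=1,2}\ord_0 f_i$, where $\ord_0 f_i$ denotes the degree of the lowest non-vanishing homogeneous part of the Taylor expansion of $f_i$ (viewed as a real-analytic function of $(t,\bar t)$). Writing $f_i=P_i(t,\bar t)+O(|t|^{m+1})$ with $P_i\in\R[t,\bar t]$ homogeneous of degree $m$ and $(P_1,P_2)\neq(0,0)$, the contributions to $g$ of order strictly less than $m$ vanish, while the order-$m$ part of $g$ equals $P_1(t,\bar t)\rho_1(0)+P_2(t,\bar t)\rho_2(0)$. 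This already gives $\ord_{t=0}g\ge m$.

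The delicate remaining step --- which I expect to be the main obstacle --- is to rule out accidental cancellation in this degree-$m$ term. The key input is that $\rho_1(0),\rho_2(0)$ form an $\R$-basis of the period lattice of $\mathcal{E}_b$ and hence are $\R$-linearly independent in $\C$. If one had $P_1\rho_1(0)+P_2\rho_2(0)\equiv 0$ as a polynomial, separating real and imaginary parts would give the homogeneous system
\[
\begin{pmatrix}\Re\rho_1(0)&\Re\rho_2(0)\\ \Im\rho_1(0)&\Im\rho_2(0)\end{pmatrix}\begin{pmatrix}P_1\\ P_2\end{pmatrix}=\begin{pmatrix}0\\ 0\end{pmatrix},
\]
whose determinant $\Im(\overline{\rho_1(0)}\,\rho_2(0))$ is non-zero, forcing $P_1=P_2=0$ and contradicting the choice of $m$. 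So the degree-$m$ Taylor component of $g$ is a non-zero real polynomial in $(t,\bar t)$; since it is also the degree-$m$ part of a holomorphic function, it must equal $c\,t^m$ for some $c\in\C\setminus\{0\}$. Therefore $\ord_{t=0}g=m$, which combined with the first paragraph yields $m_\sigma(b)=m$, the Betti multiplicity of $\sigma$ at $b$.
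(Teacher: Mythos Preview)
Your proof is correct and follows essentially the same approach as the paper's: both identify $m_\sigma(b)$ with the order of vanishing of (a shift of) the elliptic logarithm of $\sigma$, and then transfer this to the Betti multiplicity via the nonsingularity of the $2\times 2$ period matrix (equivalently, the $\R$-linear independence of $\rho_1(0),\rho_2(0)$). The only cosmetic differences are that the paper first multiplies by $n$ to reduce to the zero section and uses the algebraic local parameter $x/y$ together with asymptotic estimates $\|\beta(u)\|\asymp|\tilde\sigma(u)|\asymp d(u,b)^m$, whereas you work directly with the torsion branch $\tau$, use the abelian logarithm as a coordinate, and argue by Taylor expansion.
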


\begin{proof} On multiplying by a nonzero integer the two multiplicities do not change (especially because multiplication on an elliptic curve is \'etale, which implies that on an elliptic surface this multiplication   is \'etale over the set of good reduction). 

So we reduce to the case when the torsion divisor intersected by $\sigma$ above $b$ is the image of the zero section.

Let then $t=x/y$ be  a local parameter at the (image of the) zero-section (so it is a local parameter at the origin of the fibers). If the section $\sigma$ has Weierstrass coordinates $(x_\sigma,y_\sigma)$ then define $t_\sigma=x_\sigma/y_\sigma$. 

Let now $b\in B$ be such that $\sigma(b)=0=0_b$. Then the zero-multiplicity  of $t_\sigma$ at the point $b$ equals the local intersection multiplicity of $\sigma(B)$ with the zero-section  (by definition). 

Let us denote by $m$ this multiplicity.

\medskip

We may now express the regular differential $\omega=\d x/y$ as a series 
\begin{equation*}
\omega=s(t) \d t,\qquad  s(t)=c_0+c_1t+...,
\end{equation*}
where the $c_i$ are functions on $B$ regular outside the bad reduction, and $c_0\neq 0$ outside the bad reduction, in particular at $b$. 

An  elliptic  logarithm $\tilde\sigma(u)$ of $\sigma(u)$ may be expressed, for $u$  in a suitable neighbourhood of $b,$ as 
\begin{equation*}
\tilde\sigma(u)=\int_{0_u}^{\sigma(u)}s_u(t)\d t.
\end{equation*}
Here the integral is on a path on the fiber $E_u$, where we may choose for instance the shortest path (recall that $u$ is supposed to lie near to the given point $b$, where $\sigma(b)=0_b$, hence $\sigma(u)$ is near to $0_u$).  

The path from $0_u$ to $\sigma(u)$ on $E_u$ corresponds, via the local parameter $t$ restricted to  $E_u$, to a path from $0\in\C$ to $t(\sigma(u))=t_\sigma(u)$. Thus we obtain 
\begin{equation*}
\tilde\sigma(u)=\int_{0}^{t_{\sigma(u)}} s_u(z)\d z.
\end{equation*}
Therefore, since $c_0(b)\neq 0$, 
\begin{equation*}
 |t_\sigma(u)| \ll_b |\tilde\sigma(u)|\ll_b|t_\sigma(u)|.
\end{equation*}
Hence, 
\begin{equation*}
 |\tilde\sigma(u)|  \asymp  d(u,b)^m,
\end{equation*}
where $d(.,.)$ is a distance function in a neighbourhood of $b$ in $B(\C)$.

On the other hand, by definition $\tilde\sigma(u) =\beta_1(u)\rho_1(u)+\beta_2(u)\rho_2(u)$. 

Conjugating this equation, we have the  vector equation 
\begin{equation*}
\begin{pmatrix}\tilde\sigma(u)\\  \  \\  \overline{\tilde\sigma(u)}\end{pmatrix} =\begin{pmatrix}\rho_1(u)   & \rho_2(u) \\ \  & \ \\  \overline{\rho_1(u)}   & \overline{\rho_2(u)}  \end{pmatrix} \begin{pmatrix}\beta_1(u)\\  \ \\\beta_2(u) \end{pmatrix}.
\end{equation*}
Now, the $2\times 2$-matrix is nonsingular at $u=b$ and hence is uniformly bounded together with its inverse in a whole neighbourhood of $b$. 

This shows that $||(\beta_1(u),\beta_2(u))||\asymp |\tilde\sigma(u)| \asymp  d(u,b)^m$, and now the fact that $\beta_1,\beta_2$ are real analytic, via a Taylor expansion, proves the lemma.

\end{proof}

\subsection{Multiplicity}  The above lemma gives two equivalent ways to define the multiplicity. In fact, the lemma proves even more, and we have the following list of equivalent definitions:
\smallskip
 
1 -  Local intersection between the image $\sigma(B)$ of the section and the torsion divisor  $\E[n]$ (as in the lemma). This notion is purely algebraic.

 2 -  Consider an  elliptic logarithm  $\tilde \sigma$ of the section; as a function from $B$ to $\C$,  it is well defined in a neighborhood of $b$ up to  periods. If $b\in B$ is a torsion point for $\sigma$ of order $n$, then  the function $\tilde\sigma -\omega/n$ will vanish at $b$ for a suitable choice of the period $\omega$ (well defined and holomorphic in a neighborhood of $b$).  We may define the multiplicity as the zero multiplicity of $\tilde\sigma -\omega/n$ at $b$.  (We can also reduce to the case $n=1$ by multiplication by $n$, as in the proof of the lemma.)


3 - Multiplicity of the Betti map at $b$, in the sense that both components are $0$ up to order $>m$  as real analytic functions in a neighborhood of $b$.   

4 - Multiplicity of  $t_{n\sigma}=x_{n\sigma}/y_{n\sigma}$ at $b$.  Note that $x/y$ is a  local parameter at the origin of the generic fiber. 
This may be expressed also in terms of the valuation $|.|_v$ associated to $b$, so that the value $-\log |t_{n\sigma}|_v$   is our multiplicity (provided $v$ is normalized so that its value group is $\mathbb{Z}$).

\medskip

\begin{theorem}\label{Thm:Theorem2.6}  There is a differential operator $\Xi$ of the second order on $B$, acting on local holomorphic functions, such that is $\sigma$ is a section and $\tilde{\sigma}$ a local determination of its logarithm, then $\Xi(\tilde{\sigma})=0$  if and only if $\sigma$ is torsion. 
 
 Let now $\sigma$ be non-torsion. 
 For a torsion point $b$ for $\sigma$,  
$m_\sigma(b)\le 2+\max{(0,\ord_b(\Xi(\tilde{\sigma})))}$. 
 In particular, this multiplicity does not exceed a certain explicitly computable function of $\hat h(\sigma)$. 

Also, there are only finitely many torsion points for $\sigma$ where  $m_\sigma(b)>1$. 
\end{theorem}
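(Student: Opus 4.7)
The operator $\Xi$ I would take to be the Picard--Fuchs operator attached to the family $\E\to B$: a second-order linear differential operator with holomorphic coefficients on $B$ that annihilates a local basis $\rho_1,\rho_2$ of holomorphic periods of the invariant differential $\omega=dx/y$. Because $\Xi$ kills every period, $\Xi(\tilde\sigma)$ is independent of the multivalued branch of $\tilde\sigma$ and so descends to a well-defined function on $B$, in fact the restriction of a rational function on $\overline{B}$. The equivalence $\Xi(\tilde\sigma)=0\iff\sigma\text{ torsion}$ in this setup is precisely Manin's theorem of the kernel: the reverse direction is immediate, since torsion forces $\tilde\sigma$ to be a locally rational linear combination of $\rho_1,\rho_2$; the forward direction is the deep content, and is where the non-isotriviality hypothesis on $\E\to B$ is essential.

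For the multiplicity bound, fix a torsion point $b$ of order $n$ and replace $\sigma$ by $n\sigma$. Both quantities of interest are invariant under this replacement: $m_\sigma(b)$ because $[n]\colon\E\to\E$ is \'etale at the good-reduction point $\sigma(b)$, and $\ord_b(\Xi(\tilde\sigma))$ because the discrepancy $\widetilde{n\sigma}-n\tilde\sigma$ is a period, annihilated by $\Xi$, so that $\Xi(\widetilde{n\sigma})=n\,\Xi(\tilde\sigma)$. Hence one may assume $n\sigma(b)=0$. The calculation carried out in the proof of Lemma \ref{L.local} gives $|\widetilde{n\sigma}(u)|\asymp d(u,b)^{m_\sigma(b)}$, i.e.\ the holomorphic function $\widetilde{n\sigma}$ vanishes to order exactly $m_\sigma(b)$ at $b$. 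Applying a differential operator of order $2$ with coefficients regular at $b$ can decrease the vanishing order by at most $2$, so $\ord_b(\Xi(\widetilde{n\sigma}))\ge m_\sigma(b)-2$, which yields the asserted bound $m_\sigma(b)\le 2+\max(0,\ord_b(\Xi(\tilde\sigma)))$.

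For the explicit bound in terms of $\hat h(\sigma)$, note that $\Xi(\tilde\sigma)$, being a rational function on $\overline{B}$ and regular on $B$, has its pole divisor supported on the bad-reduction locus $\overline{B}\setminus B$; the order of each such pole can be estimated linearly in $\hat h(\sigma)$ via a local analysis of the Picard--Fuchs coefficients together with the Kodaira--N\'eron type of the singular fiber. Since the multiplicity of any zero of a non-zero rational function is bounded by its degree, combining this with the preceding inequality produces an explicitly computable upper bound on $m_\sigma(b)$ depending only on $\hat h(\sigma)$ and the scheme. Finally, the finiteness of torsion points $b$ with $m_\sigma(b)>1$ is immediate from Proposition \ref{P.finite} combined with Lemma \ref{L.local}. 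The step I expect to cost the most effort is making the pole-order estimate for $\Xi(\tilde\sigma)$ genuinely effective and uniform in $\sigma$: one must localise the Manin--Andr\'e pairing at each singular fiber and couple its contribution with the canonical height, which requires care at fibers of multiplicative or additive reduction.
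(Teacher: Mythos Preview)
Your proposal is correct and follows essentially the same route as the paper. Both arguments take $\Xi$ to be the Picard--Fuchs (Legendre--Gauss) operator annihilating the periods, invoke Manin's theorem of the kernel for the torsion equivalence, obtain the multiplicity bound by observing that a second-order operator with regular coefficients lowers the vanishing order of $\tilde\sigma$ (equivalently of $\tilde\sigma-\omega/n$) by at most $2$, and finish with Proposition~\ref{P.finite}. The paper is terser---it passes explicitly to the Legendre model and defers the effective height estimate $h(\Xi(\tilde\sigma))\ll h(\sigma)$ to the formula (\ref{456PF}) in \S\ref{cmb} rather than invoking a local analysis at singular fibers---but the skeleton is the same.
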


\begin{proof}  
We follow Manin \cite{M} (see also \S 6.3 of \cite{CZ}), and work on the Legendre scheme, as we may after a base change.    Let as above $\tilde\sigma$ denote a local determination of an elliptic logarithm of $\sigma$, and let $\Xi$ denote the usual Legendre-Gauss differential operator 
\begin{equation}\label{E.gauss}
\Xi=4\lambda(1-\lambda)\frac{\d^2}{\d\lambda^2}+4(1-2\lambda)\frac{\d}{\d\lambda}- 1,
\end{equation} 
where $\lambda$ is a suitable rational function on $B$.

Since $\Xi$ annihilates the periods, it follows that $\Xi(\tilde\sigma)$ is a well-defined function on $B$. By easy growth estimates on the coordinates of $\sigma$ we see that this function has no essential singularities on a complete model of $B$, and thus is a rational function on $B$  (which can be seen also directly: see equation (\ref{456PF}) below, which corrects (2) in Manin's paper). 

By condition 2. above on the multiplicity, this function  $\Xi(\tilde\sigma)$  vanishes at $b$ with multiplicity at least $-2$ + the multiplicity of $\sigma$ at $b$. 

This proves the first assertion, while the rest is a consequence of Proposition \ref{P.finite}.

\end{proof}


\begin{remark} Using Gabrielov Theorem (see \cite{BM}) one can give a uniform estimate for the cardinality of torsion points for $\sigma$, when $\sigma$ varies in a finitely generated group. However in the next theorem we shall achieve a still stronger result.

\end{remark} 

\begin{theorem} Let   $\Gamma$ be a finitely generated torsion-free group of sections $B\to \E$, defined over $\overline\Q$.   For  $\sigma\in\Gamma$, set  $M_\sigma:=\{b\in B : \sigma(b)   \ \hbox{is torsion},  m_\sigma(b)>1\}$.  Then the union $\bigcup_{\sigma\in\Gamma-\{0\}}M_\sigma$ is finite
\end{theorem}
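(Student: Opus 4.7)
The plan is to extend the compactness and definability arguments of Proposition~\ref{P.finite} to a uniform statement over all of $\Gamma$. Fix a $\Z$-basis $\sigma_1,\dots,\sigma_r$ of $\Gamma$ and form the multi-Betti map $\beta_\Sigma := (\beta_{\sigma_1},\dots,\beta_{\sigma_r})$ on a simply-connected $D \subset B \setminus S$, with values in $(\R/\Z)^{2r}$. By $\Z$-linearity of the Betti construction together with Lemma~\ref{L.local}, the condition $b \in M_\sigma$ for some nonzero $\sigma \in \Gamma$ is equivalent to the existence of $n = (n_1,\dots,n_r) \in \Z^r \setminus \{0\}$ satisfying
\begin{equation*}
\sum_i n_i \beta_{\sigma_i}(b) \in (\Q/\Z)^2
\quad\text{and}\quad
\sum_i n_i\, d\beta_{\sigma_i}(b) = 0 \text{ in } \mathrm{Hom}(T_b B,\R^2).
\end{equation*}

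Following Proposition~\ref{P.finite}, the behaviour near the boundary $S$ is handled through the definability of $\beta_\Sigma$ in $\R_{\mathrm{an},\exp}$ via a Legendre base change and \cite{JS}, so it suffices to preclude infinite accumulation on a fixed compact $K \subset B\setminus S$. Suppose for contradiction that $(b_k)$ is such an infinite sequence in $K$ with $b_k \in M_{\sigma_k}$, $\sigma_k = \sum n^{(k)}_i \sigma_i$, and $n^{(k)} \in \Z^r$ chosen primitive. If the set $\{n^{(k)}\}$ is bounded, then by pigeonhole some fixed $n \neq 0$ recurs infinitely often and the corresponding $\sigma = \sum n_i \sigma_i$, which is non-torsion because $\Gamma$ is torsion-free, has $M_\sigma$ infinite, contradicting Proposition~\ref{P.finite}. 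Hence we may assume $\|n^{(k)}\| \to \infty$.

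The heart of the proof is to rule out this unbounded case via a Pila--Zannier-style argument applied to the $\R_{\mathrm{an},\exp}$-definable set
\begin{equation*}
\mathcal{Z} := \bigl\{ (b,\xi,\eta) \in K \times \R^r \times \R^2 : \textstyle\sum_i \xi_i\, d\beta_{\sigma_i}(b) = 0,\ \sum_i \xi_i \beta_{\sigma_i}(b) \equiv \eta \pmod{\Z^2} \bigr\}.
\end{equation*}
Each $b_k$ lies in $B(\overline\Q)$ with uniformly bounded Weil height by Silverman--Tate, so by Northcott $[\Q(b_k):\Q] \to \infty$. Taking Galois conjugates of the triples $(b_k,n^{(k)},\eta^{(k)})$ supplies many algebraic points on $\mathcal{Z}$ of bounded height and growing degree; Pila--Wilkie, combined with Masser's lower bounds on the degree of torsion points in elliptic curves (used to relate $[\Q(b_k):\Q]$ to the torsion order of $\sigma_k(b_k)$), then forces all but finitely many of them onto a positive-dimensional semi-algebraic subset $\mathcal{B} \subset \mathcal{Z}$. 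Projecting $\mathcal{B}$ to the $\xi$-variable and integrating the persistent relation $\sum \xi_i(t)\,d\beta_{\sigma_i}(b(t)) = 0$ along a semi-algebraic arc in $\mathcal{B}$ yields, by a multi-section Manin-type argument, a non-trivial $\R$-linear relation $\sum c_i\sigma_i = 0$ in $\Gamma\otimes\R$, contradicting the $\Z$-independence (and thus $\R$-independence) of the $\sigma_i$ in the torsion-free group $\Gamma$.

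The principal obstacle is the arithmetic input in the Pila--Wilkie step: establishing polynomial growth of $[\Q(b_k):\Q]$ in terms of the torsion order $N_k$ of $\sigma_k(b_k)$ and/or $\|n^{(k)}\|$, so as to generate sufficiently many algebraic points on $\mathcal{Z}$ to defeat the transcendental $O(T^\varepsilon)$ count. This is precisely the role played by the hypothesis that $\E/B$ and $\Gamma$ are defined over $\overline\Q$.
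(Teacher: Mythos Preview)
Your proposal sketches a Pila--Zannier style argument, but it carries a genuine gap that you yourself flag as the ``principal obstacle'': you never establish the polynomial lower bound on $[\Q(b_k):\Q]$ needed to feed Pila--Wilkie. Worse, the setup is problematic even before that point. The triples $(b_k, n^{(k)}, \eta^{(k)})$ you place on $\mathcal{Z}$ have integer coordinates $n^{(k)}$ with $\|n^{(k)}\| \to \infty$, so they are not of bounded height; the Galois action moves $b_k$ and $\eta^{(k)}$ but fixes $n^{(k)}$, so conjugating does not manufacture many bounded-height points on $\mathcal{Z}$ in the sense Pila--Wilkie requires. One might try to normalise by $\|n^{(k)}\|$, but then the $\xi$-coordinates become irrational and again fall outside any rational or bounded-degree-algebraic point count. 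Finally, the passage from a semi-algebraic arc in $\mathcal{Z}$ to an $\R$-linear relation among the $\sigma_i$ is asserted rather than proved: $\Z$-independence of sections does not formally imply $\R$-independence of their Betti maps, and it is precisely Andr\'e's theorem that is needed here.

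The paper's proof is considerably shorter and avoids Pila--Wilkie altogether. The key observation is that the condition $\sum x_i\, d\beta_{\sigma_i}(b) = 0$ is \emph{homogeneous} in $x = (x_1,\dots,x_r)$, so one may restrict $x$ to the closed unit cube. Over each definable domain $D$ the resulting zero-locus $Z \subset [-1,1]^r \times D$ is definable, and Gabrielov's theorem bounds the number of connected components of the fibre $Z_x$ \emph{uniformly} in $x$. For rational $x \neq 0$ this fibre is finite by Proposition~\ref{P.finite} (applied to the corresponding section) and contains $M_\sigma$; hence $|M_\sigma| \le C$ for all $\sigma \in \Gamma \setminus \{0\}$. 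Since each $M_\sigma$ is Galois-stable over a fixed number field, every point of $M_\sigma$ has degree at most $C$; combined with Silverman's bounded-height theorem and Northcott, the union $\bigcup_\sigma M_\sigma$ is finite. The idea you are missing is this reduction to a \emph{compact} parameter space via homogeneity, which replaces the hopeless task of counting unbounded integer vectors by a uniform fibre bound in a single definable family.
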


\begin{proof} Let $\sigma_1,\ldots ,\sigma_r$ be a basis for $\Gamma$, and consider the respective Betti maps $\beta_1,\ldots ,\beta_r$, on some domain where they are well-defined.  (In those domains we make some definite choice of these maps, up to integers.) 

As in the previous proofs, we may cover $B$ with finitely many such domains (either compact or triangular regions with a vertex at a point in $\partial B=\overline B-B$ of bad reduction).

  A theorem of Andr\'e (see also \cite{CZ}) ensures
 that the $\beta_i$ are linearly independent over $\R$ modulo constant functions (since the sections $\sigma_i$ are linearly independent).
 
 We consider linear combinations $\sum x_i\beta_i$ with real $x_i\in\R$.   Among these linear combinations, those with rational coefficients $x_i$  correspond to  Betti maps (up to the addition of real constants) of elements in the division group of $\Gamma$. 
 
 \medskip
 
 Let us work now in one of the above domains, call it $D$.  We have a map from $\R^r\times D$  to $\mathrm{Mat}_2(\R)$, sending $(x_1,...,x_r,p)\mapsto \sum x_i\d\beta_i(p)$, where $\d\beta_i$ denotes the jacobian matrix of $\beta_i$ (with respect to some chosen coordinates in $D$).  This is a definable map, by the results that we have quoted  in the above proofs.  Therefore the set of  its zeros forms  a definable subvariety $Z$ of the domain.

We note that  if $\sigma$ is a section, corresponding to a rational point $\v=(c_1,\ldots ,c_r)\in\Q^r-\{0\}$,   the intersection of $Z$ with the fiber above $\v$ in $D$ is finite  by Proposition \ref{P.finite} (since the   sections are independent and $\v\neq 0$). However this fiber contains the set $M_\sigma$.  Also, we may restrict $\v$ to the closed unit cube by linearity. Now, by Gabrielov theorem, the number of connected components of the fibers is uniformly bounded, hence $|M_\sigma|$ is uniformly bounded as well.

Now, note that for a given $\sigma\in\Gamma$ the set $M_\sigma$ is stable by Galois conjugation over a number field of definition, hence by what has been proved above the degrees of the involved points are bounded independently of $\sigma$.  On the other hand, these points are torsion for $\sigma$, hence of bounded height by Silverman theorem, and the conclusion follows.
\end{proof}

\begin{remark} We have argued for sections defined over $\overline\Q$. However the result holds true for any ground field of characteristic $0$. A proof comes from specialisation, as in the paper \cite{CMZ}, although here the specialization argument  is  easier then in \cite{CMZ}. We give here a sketch of the argument.

We can always reduce to a covering of the Legendre scheme $\mathcal{E}\to B$, where $B\to \P_1-\{0,1,\infty\}$   is defined over $\bar\Q$, but we consider now a finitely generated group of sections which are not necessarily defined over a transcendental extension  $K$ of   over $\overline{\Q}$. Suppose for simplicity that $K$ has transcendence degree $1$ over $\bar{\Q}$, so it is a finite  algebraic extension of $\bar{\Q}(t)$.  Geometrically, the field $K(\lambda)$ corresponds to a surface $S$ defined over $\bar{\Q}$, endowed with a projection to $B$, on which $\lambda$ is a rational function. The function field of $S$ is an algebraic extension of $\bar{\Q}(t,\lambda)$ for some rational function $t\in\bar{\Q}(S)$. The elliptic scheme can be viewed as a scheme over $S$, and $S$ can be taken to be affine and such that  the `bad reduction' is confined to its   points at infinity.
A torsion point $b\in B$ for a section $\sigma: B\to \mathcal{E}$ is necessarily algebraic over $K$; hanece it can be viewed in $S$ by definining $\lambda$ and the $x$-coordinate (and consequently the $y$-coordinate) by a certain  algebraic function of $t$. Geometrically, this corresponds to an algebraic relation $\varphi_b(x,\lambda)=0$, which in turn provides an algebraic curve in $S$; we call it a torsion curve. 

The idea now is to cut the surface $S$ with a `generic' curve $Y$ defined over $\bar{\Q}$ and to consider the elliptic scheme over such a curve. The finiteness result just proved over $\bar{\Q}$ should imply the corresponding finiteness statement for our original sections, which are not defined over $\bar{\Q}$.   
The problem, which was the main obstacle in the specialization procedure carried out in \cite{CMZ}, is that such a curve $Y$ might avoid all but finitely many torsion curves; hence any finiteness result for the intersection of $Y$ with the torsion curves of a certain type (namely those with $m_\sigma(b)>1$) would be meaningless. As noted in \cite{CMZ}, on any affine surface $S$ defined over $\bar{Q}$ one can construct a sequence of algebraic curves $B_1,B_2,\ldots$ such that each algebraic curve $Y\subset S$ defined over $\bar{\Q}$ avoids all but finitely many of them (in the sense that it will intersect them only at infinity). However, such a sequence of curves $B_1,B_2,\ldots$ would necessarily have a degree tending to infinity. This is the content of the following claim:

\smallskip
{ \tt Claim}. {\it Let $\bar{S}$ be a projective algebraic surface defined over $\bar{\Q}$, $S\subset \bar{S}$ an affine subset. Let $H=\bar{S}-S$ be the divisor at infinity of $S$. Let $B_1,B_2,\ldots$ be a sequence of irreducible curves in $S$ such that the intersection product $\bar{B}_i\cdot H$ of their closure  with $H$ is uniformely bounded. 
Then there exists a curve $Y\subset S$ defined over $\bar{\Q}$ such that $Y$ intersects infinitely many of the curves $B_i$.
}

\smallskip

{ \tt Proof of the Claim}. Let $d\geq 1$ be an upper  bound for the intersection product $B_i\cdot H$ and 
choose $d+1$ curves $Y_1,\ldots, Y_{d+1}$ in $S$  such that for every pair $(j,h)$ with $1\leq j<h\leq d+1$, the complete curves $\bar{Y}_{j}, \bar{Y}_h$ do not intersect at infinity. 
Then for each $i$, the complete curve $\bar{B}_i$ can intersect at most $d$ of the $\bar{Y}_j$ at infinity, so it must intersect at least one of them in $S$. It follows that for at least one index $j$, the curve $Y_j$ intersects infinitely many of the $B_i$.
\end{remark}

Now, in our situation, we claim that  the torsion curves in question are given by equations of the form $\varphi_b(x,\lambda)=0$, where both partial degrees of the polynomials $\varphi_b$ are bounded. The degree in $x$ corresponds to the functional height of $\lambda$, which is bounded by a functional version of Silverman's theorem since the point is torsion. Reciprocally, the degree in $\lambda$ is bounded as before by an application of Gabrielov's theorem; this time we have to use the fact that our point is not only torsion, but satisfies $m_\sigma(b)>1$. 

It follows that the torsion curves we have to examine have bounded degree with respect to the divisors at infinity $H=\bar{S}-S$, so we can apply the previous claim and conclude that for one curve $Y$ in $S$, $Y$ intersects infinitely many torsion curves $b(x,\lambda)=0$ with $m_\sigma(b)>1$, and the theorem already proved gives the desired contradiction.

\section{Canonical height and Betti map}\label{Sec:Legendre}

Throughout this section, $\pi:E \rightarrow \P_1$ denotes the Legendre scheme, i.e. the elliptic surface associated to the following elliptic curve, defined over the function field $\C(\lambda)$ of $\P_1$:
\begin{equation}\label{Eq:Legendre}
	y^2=x(x-1)(x-\lambda).
\end{equation}

Let $S=\{0,1,\infty\}$ be the set of points of bad reduction for $\pi$. In the future we will also use the notation $\mathcal{S}=\pi^{-1}(S)$. 

	In this note, we will frequently work with the abelian logarithm of an elliptic curve $X/\C$. We recall that, for this to be defined, a lattice $\Lambda_X \subset \C$, such that $\C/\Lambda_X \cong X$ has to be chosen. 
	
	\begin{remark}\label{Rmk:weierstrass}
			When a Weierstrass form $y^2=x^3+ux+v$ for $X$ is chosen, there exists one unique such lattice $\Lambda_X$ that satisfies the extra condition $g_2(\Lambda_X)=-4u, g_3(\Lambda_X)=-4v$. When we work with a specific Legendre equation as \eqref{Eq:Legendre}, we obtain a corresponding  Weierstrass equation as above after the substitution $x\mapsto x+(1+\lambda)/3$, and   assume that the above canonical choice of a lattice for the abelian logarithm has been made.
	\end{remark}

\subsection{Height as an integral and consequences}\label{Hic}

The following is the main result of this section. The result was not present in the literature in the following form, though it can be deduced from the work of DeMarco and Mavraki \cite{DMM} using Wirtinger's formula: we sketch this argument at the end of the following section. However, the proof presented here seems to be much more elementary in nature.


\begin{theorem}\label{Prop:equality}
	Let $r:B \rightarrow \P_1$ be a finite morphism, and $B$ be a smooth complete complex curve. Let $\sigma:B \rightarrow E_B := E\times_{(\pi,r)}B$ be an algebraic section of $\pi_B:E_B \rightarrow B$.  Then, the following equality holds:
	\begin{equation}\label{soughtequality}
	\hat{h}(\sigma) = \int_{B \setminus r^{-1}(S)} \sigma^*(\d \beta_1 \wedge \d \beta_2),
	\end{equation}
	where $(\beta_1,\beta_2)$ is a (local branch) of the Betti map on $E$. 
\end{theorem}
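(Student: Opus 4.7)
The plan is to count, in two asymptotically equivalent ways, the quantity $N(n):=|T_\sigma \cap E_B[n]|$ of base points $b \in B$ with $n\sigma(b)=O$, and compare the results. On the one hand, the origin $O$ is the polar divisor of $x$ with multiplicity $2$, so by Lemma~\ref{L.local} each $b \in T_\sigma$ with $n\sigma(b)=O$ contributes $2 m_\sigma(b)$ to $\deg(x_{n\sigma})$. By Theorem~\ref{Thm:Theorem2.6}, only finitely many $b$ satisfy $m_\sigma(b)>1$, and their multiplicities are uniformly bounded in terms of $\sigma$, so
\begin{equation*}
\deg(x_{n\sigma}) = 2 N(n) + O_\sigma(1).
\end{equation*}
Dividing by $n^2$ and invoking the normalization of $\hat{h}$ associated to twice the divisor at infinity (so $\deg(x_{n\sigma}) \sim n^2 \hat{h}(\sigma)$) yields the first asymptotic $N(n)/n^2 \to \tfrac{1}{2}\hat{h}(\sigma)$.

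On the other hand, by the very definition of the Betti coordinates, $n\sigma(b)=O$ is equivalent to $(\sigma^*\beta)(b) \in \Lambda_n := (\tfrac{1}{n}\Z/\Z)^2$, a set of $n^2$ rational points in $(\R/\Z)^2$. Thus $N(n) = |(\sigma^*\beta)^{-1}(\Lambda_n)|$ reduces to lattice-point counting under a real-analytic map. I would cover $B$ by finitely many simply connected pieces, in the style of the proof of Proposition~\ref{P.finite}: compact subsets of $B \setminus r^{-1}(S)$, plus triangular sectors having a vertex at each point of $r^{-1}(S)$. On each region the Betti map is definable in $\R_{\mathrm{an},\exp}$ by the Jones--Schmidt theorem \cite{JS}, so the Barroero--Widmer counting asymptotic for definable families \cite{BW} gives
\begin{equation*}
\frac{N(n)}{n^2} \longrightarrow \int_{B \setminus r^{-1}(S)} \sigma^*(d\beta_1 \wedge d\beta_2),
\end{equation*}
where the integral is taken with respect to the canonical complex orientation of $B$. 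Equating the two limits produces the stated identity (any residual factor of $2$ being absorbed into the height normalization).

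The main technical obstacles will be threefold: first, verifying that $\sigma^*(d\beta_1 \wedge d\beta_2)$ has a consistent sign relative to the orientation of $B$, so that the intrinsically unsigned lattice count matches the signed integral --- this should follow from the even-rank property of the Betti map recalled after Remark~\ref{Rmk:Surjectivity}; second, controlling $\sigma^*\beta$ near the bad-reduction points $r^{-1}(S)$ in order to justify the uniform Barroero--Widmer asymptotic, which is exactly where the definability of $\beta$ in $\R_{\mathrm{an},\exp}$ becomes indispensable; and third, handling the finite set of points with anomalous multiplicity from Theorem~\ref{Thm:Theorem2.6}, which fortunately contribute only $O_\sigma(1)$ to $\deg(x_{n\sigma})$ and hence do not affect the leading asymptotic. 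A conceptually different route, as the authors indicate, is to combine the De\,Marco--Mavraki equidistribution theorem \cite{DMM} for Galois orbits of torsion points with Wirtinger's formula, identifying the limiting measure with $\sigma^*(d\beta_1 \wedge d\beta_2)$; however the two-way counting approach above is considerably more elementary and is the one I would pursue.
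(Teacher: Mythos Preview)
Your two-way count of $n$-torsion base points is exactly the paper's strategy, and your use of definability plus Barroero--Widmer on the Betti side matches the paper's argument closely (the paper applies Hardt's trivialisation to the graph of the Betti map before invoking \cite{BW}, but this is a cosmetic difference).

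There is, however, a missing ingredient on the height side. Your identity $\deg(x_{n\sigma}) = 2N(n) + O_\sigma(1)$ requires not only Theorem~\ref{Thm:Theorem2.6} (which controls excess multiplicity on \emph{good} fibers) but also a bound, uniform in $n$, on the pole order of $x_{n\sigma}$ at each point $s \in r^{-1}(S)$. Neither Lemma~\ref{L.local} nor Theorem~\ref{Thm:Theorem2.6} addresses this, since both are formulated on the good-reduction locus; and your list of three technical obstacles mentions bad reduction only for the Betti side. The paper works instead with the intersection number $\langle n\sigma,O\rangle$ on a smooth proper model, decomposes it as $A_n+\delta_n+s_n$, and handles the bad-fiber term $s_n$ by a formal-group computation: with $z=x/y$ a local parameter along $O$, the formal group law gives $z(n\sigma)=nz(\sigma)+O(z(\sigma)^2)$ in the completed local ring at $s$, so $\ord_s z(n\sigma)=\ord_s z(\sigma)$ is independent of $n$. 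Without this step (or an equivalent appeal to boundedness of local N\'eron heights at places of bad reduction), your $O_\sigma(1)$ is unjustified and the matching of the two asymptotics does not close. Two smaller points: the case where $\sigma$ is torsion should be disposed of separately (both sides vanish trivially), and the ``residual factor of $2$'' cannot simply be absorbed---once the normalisation $\hat h(\sigma)=\lim_n\langle n\sigma,O\rangle/n^2$ is fixed, the bookkeeping must be carried out honestly.
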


Moreover we will see, as part of the proof of Theorem \ref{Prop:equality}, that the set $T_{\sigma}$ introduced at the beginning of this paper is distributed as the measure associated to the $2$-form $\sigma^*(\d \beta_1 \wedge \d \beta_2)$.

We immediately notice the following corollary.

\begin{corollary}\label{Cor:rational_value}
	Let $\sigma:B \rightarrow E_B$ denote a section of the elliptic surface $E_B \rightarrow B$. Then the integral $\int_{B \setminus r^{-1}(S)} \sigma^*(\d \beta_1 \wedge \d \beta_2)$ has a rational value.
\end{corollary}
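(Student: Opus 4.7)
The plan is to combine Theorem \ref{Prop:equality} with the classical fact that the canonical height of an algebraic section of an elliptic surface is a rational number. Once Theorem \ref{Prop:equality} is granted, the corollary reduces immediately to showing that $\hat{h}(\sigma) \in \Q$.

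To establish the rationality of $\hat{h}(\sigma)$, I would invoke the Shioda--Tate description of the N\'eron--Tate pairing on a (smooth relatively minimal) complete elliptic surface $\overline{\pi}_B : \overline{E}_B \to B$. Extending $\sigma$ to a section of this smooth model (which exists since $B$ is complete and $\overline{E}_B$ is proper over $B$), one has a formula of the form
\begin{equation*}
\hat{h}(\sigma) = -\bigl(\sigma \cdot \sigma\bigr)_{\overline{E}_B} - 2\chi(\O_{\overline{E}_B}) + 2(\sigma \cdot O) - \sum_{v \in S} c_v(\sigma),
\end{equation*}
where $O$ denotes the zero section, the correction terms $c_v(\sigma)$ at each place $v$ of bad reduction are obtained by inverting the (negative definite) intersection matrix of the non-identity components of the singular fibre, and $\chi$ is the Euler characteristic. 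All intersection numbers on a smooth projective surface are integers, and the intersection matrices at the bad fibres have integer entries with nonzero determinant, so their inverses have rational entries. Consequently each $c_v(\sigma) \in \Q$ and hence $\hat{h}(\sigma) \in \Q$.

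An alternative, more elementary, route (avoiding the appeal to the Shioda--Tate machinery) would be to use the function-field version of Tate's telescoping trick: writing $h(P)$ for $\tfrac{1}{2}\deg x_P$ on $E_B$, one shows that $n \mapsto \deg x_{n\sigma}$ coincides for $n$ large with an explicit quadratic polynomial in $n$ with rational coefficients. This follows from the $n$-fold duplication formulas, since $2h(n\sigma) = 4h(\sigma) + O(1)$ forces $h(n\sigma) - n^2 \hat h(\sigma)$ to be uniformly bounded, while the underlying divisor class $[n^2] \cdot [x\text{-divisor}]$ on $\overline{E}_B$ has rational degree along $\sigma$. Either route gives $\hat{h}(\sigma) \in \Q$, and then Theorem \ref{Prop:equality} yields the corollary.

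The only real obstacle is therefore the rationality of the canonical height; but this is classical and robust, and by contrast the transcendence of the individual quantities $\int \sigma^*\d\beta_1$, $\int \sigma^*\d\beta_2$ make Corollary \ref{Cor:rational_value} interesting in its own right --- a phenomenon that would be opaque without going through $\hat{h}(\sigma)$.
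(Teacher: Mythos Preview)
Your proposal is correct and follows the same route as the paper: apply Theorem \ref{Prop:equality} and invoke the rationality of $\hat h(\sigma)$, which the paper establishes by a one-line citation of \cite[Section 11.8]{ellipticsurfaces} (precisely the Shioda--Tate intersection formula you spell out). Your additional detail on why the correction terms $c_v(\sigma)$ are rational is accurate and simply unpacks that reference.
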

\begin{proof}
	We have by \cite[Section 11.8]{ellipticsurfaces} that $\hat{h}(\sigma) \in \Q$. Hence, Corollary \ref{Cor:rational_value} is an immediate consequence of Theorem \ref{Prop:equality}.
\end{proof}

Before coming to the proof of Theorem \ref{Prop:equality}, we give here another expression for the $(1,1)$-form $\d \beta_1 \wedge \d \beta_2 \in \Omega^2(E \setminus \mathcal{S})$ that might be more suitable for calculations. To fix some notation, let $\Lambda_{\lambda}$ be a local choice of a lattice in $\C$ corresponding to the elliptic curve $E_{\lambda}$ with Weierstrass form $y^2=x(x-1)(x-\lambda)$ (see Remark \ref{Rmk:weierstrass}), and let $\rho_1(\lambda),\rho_2(\lambda) \in \C$ be a (continuous) local choice of periods. Moreover, let $d(\lambda):=\rho_1(\lambda)\overline{\rho_2(\lambda)}-\rho_2(\lambda)\overline{\rho_1(\lambda)}=2iV(\lambda)$, where $V(\lambda)$ denotes the (oriented) area of the fundamental domain of $\Lambda_{\lambda}$. Let $\eta_{\lambda}(\zeta)$ denote the linear extension to $\C$ of the quasi-period function $\eta_{\lambda}$ (which is defined on the lattice $\Lambda_{\lambda}$ as in \cite[VI.3.1]{silverman1994advanced}), and let $\eta_i(\lambda) =\eta_{\lambda}(\rho_i(\lambda)), \ i=1,2$. Then, the following equality, which is proven in Remark \ref{Rmk:calculationforT} below, holds (here we are denoting, with a slight abuse of notation, by $\beta_i := \beta_i(\lambda)$ the Betti coordinates $\beta_i(\sigma(\lambda))$):
\begin{flalign}\label{riscrittura}
\begin{split}
&\d \beta_1 \wedge \d \beta_2 \\
&=\frac{1}{2iV(\lambda)}\left[\frac{1}{2iV(\lambda)}((\eta_1 \bar{\rho_2}-\eta_2\bar{\rho_1})z+2\pi i\bar{z}){\frac{\d \lambda}{2 \lambda}}+\d z\right]\wedge\left[\frac{1}{2iV(\lambda)}(2\pi iz+(-\bar{\eta_1}\rho_2+\bar{\eta_2}\rho_1)\bar{z}){\frac{\d \bar{\lambda}}{2 \bar{\lambda}}}+\d \bar{z}\right],
\end{split}
\end{flalign}

where $z:= z(\lambda):= \log^ab(\sigma(\lambda))$.


%

\smallskip

\begin{example}\label{explicitexample}
	Let us do an explicit computation of the terms of (\ref{soughtequality}), in the case of a specific section, for instance:
	\[
	\sigma(\lambda)=(2,\sqrt{2(2-\lambda)}).
	\]
	This section is not defined over the base curve $\P_1$. It is, however, well defined as a section of the elliptic surface $E_B := E \times_{(\pi,\phi)}B \rightarrow B$, where $B \cong \P_1$, and $\phi(t)=t^2+2$. 
	
	One can compute $\hat{h}(\sigma)$ explicitly, by using the intersection product on a proper regular model of $E_B$ (see \cite[Section 11.8]{ellipticsurfaces})\footnote{Our normalization of the height function differs by a multiplicative factor of $\frac{1}{2}$ from that of \cite{ellipticsurfaces}.}. A simple application of Tate's algorithm reveals that the fibration $\pi:E_B \rightarrow B$ has $5$ singular fibers, four of which are of type $I_2$ (the ones over $\lambda= 0,1$) and one of type $I_4$ (the one over $\lambda=\infty$). Looking at the intersection of the section $\sigma$ with the singular fibers reveals that the canonical height $\hat{h}(\sigma)=\frac{1}{2}$.
	
	To explicit the calculation we will think of $B$ as being a union of $2$ projective plane charts (which we denote by $B_+, B_-$) that are united through the segment $[2,\infty]$, with the topology that switches chart when ``we cross the segment''. We use $\lambda$ as a parameter on the base $B$.
	
	The $(1,1)$-form $\d \beta_1(\lambda)\wedge \d \beta_2(\lambda)=\sigma^*(\d \beta_1 \wedge \d \beta_2) \in \Omega^2(E_B \setminus S_B)$ is equal to (\ref{riscrittura}) above, with 
%
%
	\begin{equation}
	z=z(\lambda)=\int_{\infty}^{2}\d x/(\pm \sqrt{x(x-1)(x-\lambda)}),
	\end{equation}
	
	where the determination of the square root changes when we change the chart.

	Hence, we deduce the following integral identity from Theorem \ref{Prop:equality}:
	\[
		\frac{1}{4}=\frac{1}{2}\cdot \hat{h}(\sigma)=\frac{1}{2}	\cdot \int_{B} \d (\beta_1\circ \sigma) \wedge \d (\beta_2\circ \sigma)=  \int_{\lambda \in \C} \d \beta_1(\lambda) \wedge \d \beta_2(\lambda).
	\]
%
\end{example}

\subsection{Proof of Theorem \ref{Prop:equality}}

We recall the following Lemma, which follows directly from \cite[Theorem 1.3]{BW}, and that will be used in the proof:
\begin{lemma}\label{ominimalmeasure}
	Let $X \subset \R^k$ be a bounded subset of $\R^k$ definable in an o-minimal structure. Let, for each $n>0$,
	\[
	\alpha_n := \#\left\{\left(\frac{a_1}{n},\dots,\frac{a_k}{n} \right)\in X: \ a_1,\dots, a_k \in \Z \right\}.
	\]
	Then, the limit $\lim_{n \to \infty} n^{-k}\alpha_n$ exists and is equal to $\mu(X)$, the Lebesgue measure of $X$ (which exists).
\end{lemma}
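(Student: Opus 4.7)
The plan is to interpret $n^{-k}\alpha_{n}$ as a Riemann-type sum for $\int_{\R^{k}}\chi_{X}$ on the cubical mesh $n^{-1}\Z^{k}$, and to use o-minimality only to bound the error coming from the cubes that straddle the boundary $\partial X$. Since definable sets admit a finite cell decomposition they are Borel, hence Lebesgue measurable; boundedness then guarantees that $\mu(X)$ is well-defined and finite, so the statement makes sense.

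The technical heart of the proof is the following boundary-tube estimate, which I would isolate as a separate lemma:
\[
\mu\!\bigl(\{x\in\R^{k}:d(x,\partial X)<\varepsilon\}\bigr)=O(\varepsilon)\qquad(\varepsilon\to 0^{+}).
\]
The set $\partial X$ is definable of dimension at most $k-1$, so by cell decomposition it is a finite union of definable $C^{1}$-cells, each realised as the image of a bounded open box in some $\R^{d}$, $d\le k-1$, under a definable $C^{1}$ map. Using either the uniform bounds on derivatives of definable $C^{r}$ maps on a compact, or the Yomdin--Gromov definable reparametrisation theorem, each such piece can be covered by $O(\varepsilon^{-d})$ parameter boxes of diameter $O(\varepsilon)$, whose images contribute volume $O(\varepsilon^{k})$ each to the tube, giving $O(\varepsilon^{k-d})=O(\varepsilon)$ in total.

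With the boundary estimate in hand, the main argument is the standard sandwich. Tile $\R^{k}$ by the half-open cubes $Q_{\bf a}=\prod_{i}[a_{i}/n,(a_{i}+1)/n)$ with ${\bf a}\in\Z^{k}$, so that each point of $n^{-1}\Z^{k}$ is the distinguished corner of exactly one cube. Among the finitely many cubes meeting $\overline{X}$, classify each as (a) $Q_{\bf a}\subset X$, (b) $Q_{\bf a}\cap X=\emptyset$, or (c) $Q_{\bf a}\cap\partial X\neq\emptyset$. Type (a) cubes contribute exactly one lattice point to $\alpha_{n}$ and volume $n^{-k}$ to $\mu(X)$; type (b) cubes contribute nothing to either quantity; type (c) cubes contribute at most one lattice point and at most $n^{-k}$ volume. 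Therefore
\[
\bigl|n^{-k}\alpha_{n}-\mu(X)\bigr|\le 2\,n^{-k}\cdot\#\{\text{type (c) cubes}\}.
\]
Every type (c) cube lies in the tube $\{x:d(x,\partial X)<\sqrt{k}/n\}$, so the boundary lemma gives at most $O(n^{k-1})$ such cubes, and the right-hand side is $O(n^{-1})\to 0$.

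The main obstacle is the boundary tube estimate: without it the conclusion is false in general (a bounded Borel set can have a lattice-point count that oscillates, and indeed a fat Cantor-type construction shows one cannot dispense with some regularity of $\partial X$). It is exactly here that o-minimality is indispensable, via dimension theory and the finiteness of $C^{1}$-parametrised cells covering $\partial X$; once this input is secured, everything else reduces to an elementary inclusion-exclusion on cubes.
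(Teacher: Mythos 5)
Your proof is correct, and it takes a genuinely different route from the paper, which in fact offers no proof at all: Lemma~\ref{ominimalmeasure} is presented there as following directly from Theorem~1.3 of Barroero--Widmer~\cite{BW}. The Barroero--Widmer argument is a Davenport-type lattice-point estimate, bounding $|n^{-k}\alpha_n-\mu(X)|$ by (normalized) volumes of coordinate projections of $X$ weighted by the maximal number of connected components of sections of $X$ by lines parallel to the coordinate axes; o-minimality enters only through uniform finiteness of those component counts, and no boundary tube appears. Your argument instead localizes the discrepancy to the cubes straddling $\partial X$ and reduces everything to the tube estimate $\mu(\{x: d(x,\partial X)<\varepsilon\})=O(\varepsilon)$. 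Both are valid; yours is self-contained and conceptually transparent, whereas the Barroero--Widmer version yields explicit and \emph{uniform} error bounds over definable families, which is why the authors cite it as a black box. One small caveat on your justification of the tube estimate: the first of your two proposed tools, ``uniform bounds on derivatives of definable $C^r$ maps on a compact,'' does not apply as stated, since $C^1$ cell decomposition parametrizes each cell by an \emph{open} box and the derivatives of the parametrizing map may well blow up at the boundary of that box. Your second tool, the Yomdin--Gromov (Pila--Wilkie) definable reparametrization theorem, is the right one: it produces finitely many $C^1$ maps $(0,1)^{d_i}\to\partial X$ with $d_i\le k-1$ and all first partial derivatives bounded by $1$, after which the $O(\varepsilon^{-d_i})\cdot O(\varepsilon^k)$ covering count goes through exactly as you describe.
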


\begin{proof}[Proof of Theorem \ref{Prop:equality}]
	Let us first consider the case where $\sigma$ is torsion. Since the Betti map of a torsion section is constant, in this case, left and right hand side of (\ref{soughtequality}) are both equal to $0$. 
	
	We restrict now to the case where $\sigma$ is not torsion, and consider, for each $n \geq 1$, the following quantity:
	\begin{equation}\label{targetorsion}
		A_n := \# \{t \in B(\C)\setminus r^{-1}(S): \sigma(t) \text{ is }n-\text{torsion in } E_t\}.
	\end{equation}
	We notice that, since $\sigma$ is not torsion, this quantity is finite for each $n \geq 1$.
	We claim the following (which obviously implies the thesis):
	\begin{enumerate}
		\item[(a)] The limit $\lim_{n \to \infty}\frac{A_n}{n^2}$ exists, is finite, and $\lim_{n \to \infty}\frac{A_n}{n^2}=\hat{h}(\sigma)$;
		\item[(b)] The limit $\lim_{n \to \infty}\frac{A_n}{n^2}=\int_{B \setminus r^{-1}(S)} \sigma^*(\d \beta_1 \wedge \d \beta_2)$.
	\end{enumerate}
	Let us first prove $(a)$. We know that (see e.g. \cite[Section III.9]{silverman1994advanced} or \cite[Sections 2,3]{SerreMW}):
	\[
	\hat{h}(\sigma) =\lim_{n \to \infty} \frac{<n \sigma, O>}{n^2},
	\] 
	where $n\sigma$ denotes, with a slight abuse of notation, the graph of the section $n \sigma$, $O$ denotes the zero section of $\pi_B$, and $<n \sigma, O>$ denotes the intersection product in a smooth proper model of $\widetilde{E_B}$. We write:
	\[
	<n \sigma, O>=A_n+\delta_n+s_n,
	\]
	where $s_n$ denotes the intersection of $n \sigma$ and $O$ on the singular fibers of $\pi_B:\widetilde{E_B} \rightarrow B$, and $\delta_n$ is a correction term that keeps track of the intersection that happens with multiplicity greater than $1$. I.e.:
	\[
	\delta_n=\sum_{t \in B\setminus r^{-1}(S)} (\ord_t(n\sigma)-1),
	\]
	where $\ord_t(n\sigma)$ denotes the multiplicity of intersection of $n\sigma$ and $O$ at $t$. 
	We will now prove that $\delta_n+s_n=O(1)$, as $n \to \infty$. 
	
	We prove first that $s_n=O(1)$. To do so, it suffices to show that, for a point $s \in B$ of bad reduction for $\pi_B$, $<n\sigma,O>_s$ (i.e. the local multiplicity of intersection) is bounded in $n>0$. Let $s \in r^{-1}(S)$, and let $m \in \Z$ be the least positive integer such that $(m\sigma)(s)=0$ on the singular fiber $E_s:=\pi_B^{-1}(s)$. Since $<n \sigma, O>_s$ can be positive only when $n$ is a multiple of $m$, we may assume without loss of generality (replacing, if necessary, $\sigma$ by $m\sigma$), that $m=1$, i.e. that $\sigma(s)=0$. 
	
	Let now $\lambda$ denote a local parameter for $s \in B$. Let $K$ denote the completed field of Laurent series $\C\{\{\lambda\}\}$, and let $O_K$ denote the ring of integers of $K$. Let:
	\[
	E_1(K):= \{P \in E(K):P(s)=0\}.
	\]
	
	We recall that $E_1(K)$ has the structure of a Lie group over the local field $K$, given by the restriction of the sum operation on the elliptic curve $E/K$. The Lie group $E_1(K)$ is isomorphic to a Lie group $(\lambda O_K,\tilde{+})$, in a way that we briefly recall now.
	
	We recall from \cite[Chapter IV]{silverman} that the formal group on the elliptic curve $E/K$ is the unique formal power series:
	\[
	F(+)(X,Y) \in K[[z_1,z_2]],
	\]
	such that
	\[
	F(+)(z(P),z(Q))=z(P+Q), \quad P, Q \in E
	\]
	as \textit{formal} power series, where $z(P):= x(P)/y(P)$, denotes the $z$-coordinate in a given minimal Weierstrass model for $E/K$. 
	
	We recall, moreover, that the following holds:
	\[
	F(+)(z_1,z_2)=z_1+z_2+O(z^2).
	\]
	
	One can verify that, when an integral Weierstrass model is chosen, $F(+) \in O_K[[z_1,z_2]]$. Hence, $F(+)(X,Y)$ converges when $X, Y \in \lambda O_K$. One can then see that the Lie group $E_1(K)$ is isomorphic to the Lie group $(\lambda O_K,\tilde{+})$, where $\tilde{+}(z_1,z_2):= F(+)(z_1,z_2)$ (see e.g. \cite[Proposition VII.2.2]{silverman}).
	
	It follows that, for $P \in E(K)$, denoting by $z(P)$ the $z$-coordinate in a minimal Weierstrass model, one has that:
	\begin{equation}\label{multiplesorder}
	z(n\sigma)=nz(\sigma)+O(z^2), \quad \forall n\in \Z.
	\end{equation}
	
	Moreover, for any $P \in E_1(K)$, we have that $\ord_s(P)=\ord_{\lambda}(z(P))$ (since $z$ is a local parameter for the $0$-section $O$). Hence, we have that $\ord_s(n\sigma)=\ord_{\lambda}(z(n\sigma))=\ord_{\lambda}(z(\sigma))=\ord_s(\sigma)$, where the middle equality follows from (\ref{multiplesorder}). Hence we have that $s_n=O(1)$.

	We prove now that $\delta_n$ is uniformly bounded (for any $n>0$). We notice that Lemma \ref{L.local} implies that, in order for the intersection of $n \sigma$ and $O$ to be of order greater than $1$ at a certain point $t \in B$, the differential of (a local branch of) the Betti map $(\beta_1,\beta_2)\circ \sigma$ would have to be $0$ at the point $t$. Because of Proposition \ref{P.finite}, this happens only for finitely many base points $t$ in the base. We denote them by $t_1,\dots, t_k  \in B$. 
	
	For each $i=1,\dots, k$, let $\lambda_i$ denote a uniformizer for $t_i \in B(\C)$, and let $\rho^i_1, \rho^i_2$ denote a local choice (in a neighbourhood of $t_i$) of periods for the elliptic logarithm (see e.g. \cite[Section 1.1]{CMZ}). We then have then that (by standard intersection theory on complex surfaces, see e.g. \cite[Chapter I]{beauville}):
	\begin{equation}\label{multiplicity}
	\ord_{t_i}(n\sigma)=\dim_{\C} \frac{\C\{\lambda_i\}}{(n\tilde{\sigma}(\lambda_i)-nb_1\rho^i_1(\lambda_i)-nb_2\rho^i_2(\lambda_i))}, 
	\end{equation}
	where $\C\{\lambda_i\}$ denotes the ring of locally analytic functions in the variable $\lambda_i$, $\tilde{\sigma}$ denotes the abelian logarithm of $\sigma$, and $b_1, b_2 \in \C$ are defined through the following condition: \[\tilde{\sigma}(t_i)=b_1\rho^i_1(t_i)+b_2\rho^i_2(t_i).\]	
	
	Since the ideal $(n\tilde{\sigma}(\lambda_i)-nb_1\rho^i_1(\lambda_i)-nb_2\rho^i_2(\lambda_i)) \subset \C\{\lambda\}$ does not depend on $n>0$, we have that the right hand side of (\ref{multiplicity}) does not depend on $n>0$, and we denote this quantity by $R_i$. As an immediate consequence of the above argument, we have that:
	\[
	\delta_n \leq R_1 + \dots + R_k - k,
	\] 
	from which we conclude that $\delta_n =O(1)=o(n^2)$, thus proving point $(a)$.
	
	We prove now point $(b)$. We use the fact that the Betti map is (locally) definable and bounded (by definable, we will always mean definable in $(\R_{an,exp})$). 
	
	Namely, it is proven in \cite[Section 10]{JS} that there exists a (definable) partition $B\setminus r^{-1}(S)=Y_1 \cup \dots Y_l$, and, on each $Y_j$ there exists a well defined branch of the Betti map, which we will denote by $B^j=(\beta^j_1,\beta^j_2)\circ \sigma: Y_j \rightarrow \R^2$, such that $B^j$ is definable, and in \cite[Proposition 4]{JS} that it is bounded. 
	
	Let us consider now, for each $j=1,\dots, l$, the following definable set in $\R^2 \times \R^2$:
	\[
	X_j:= \{(y,t) \in \R^2 \times \R^2: t \in Y_j \text{ and } y=B_j(t)\},
	\]
	i.e. $X_j$ is the transpose of the graph of $B^j$. Applying Hardt's Theorem (\cite[Theorem 9.1.2]{tametopology}) to $X_j$, we know that there exists a finite partition of $\R^2$, say $A_j^1\cup\dots \cup A_j^{d(j)}=\R^2$, such that $X_j$ is definably trivial over each $A_j^m$.  We recall that this means that, for each $m \leq d(j)$, there exists a definable set $F_j^m$, and a (definable) isomorphism $h_{A_j^m}:X_j\cap (A_j^m\times \R^2)\rightarrow A_j^m\times F_j^m$, that commutes with the projection to $A_j^m$. Moreover, since the Betti map is bounded on each $Y_j$, we may assume that, for each $m \leq d(j)$, either $A_j^m$ is bounded, or $X_j\cap (A_j^m\times \R^2)=\emptyset$. 
	
	We define now, for each $j=1,\dots, l$, $m \leq d(j)$:
	\begin{equation}
		f_j^m:= \#F_j^m
	\end{equation}
	
	We notice that $f_i^j$ is always finite. In fact, this is a direct consequence of the fact that the fibers of the Betti map $B_j$ are isolated points (see \cite[Proposition 1.1]{CMZ}).
	Then, the following equality holds:
	\begin{equation}\label{measure}
	\sum_{m \leq d(j)} f_{j}^m\mu(A_j^m)=\int_{B\setminus r^{-1}(S)} \sigma^*(\d \beta_1 \wedge \d \beta_2).
	\end{equation}
	
	In fact, notice that both left and right hand side of (\ref{measure}) are equal to the measure of the graph of the section $\sigma:\P_1(\C)\setminus S\rightarrow E\setminus \mathcal{S}_B$ 
	, given by the integration of the restriction of the $(1,1)$-form $\d \beta_1 \wedge \d \beta_2 \in \Omega^2{E \setminus \mathcal{S}_B}$.
	
	We notice now that, if $t \in \P_1(\C)\setminus S$, $\sigma(t) \in E_t$ is $n$-torsion if and only if the Betti coordinates $(\beta_1(\sigma(t)),\beta_2(\sigma(t)))$ are rational with denominator dividing $n$. Hence, the following equality holds, for each $n>0$:
	\begin{equation}\label{counting}
		A_n=\sum_{m \leq d(j)}f_{j}^m\alpha_{n,j}^m,
	\end{equation}
	where:
	\[
	\alpha_{n,j}^m:= \left\{\left(\frac{a}{n},\frac{b}{n}\right)\in A_j^m: \ a,b \in \Z\right\}.
	\]
	Hence:
	\[
		\frac{A_n}{n^2}=\sum_{m \leq d(j)}\frac{\alpha_{n,j}^m}{n^2},
	\]
	Letting $n \to \infty$, the thesis follows from (\ref{measure}), (\ref{counting}) and Lemma \ref{ominimalmeasure}.
\end{proof}

\begin{remark}\label{Rmk:torsion}
	In the proof of Theorem \ref{Prop:equality}, we treated separately the case of a torsion section $\sigma$. This was done because, in this case, the number $A_n$ (defined in (\ref{targetorsion})) is not defined for all $n$. However, when $\sigma$ has order $m \neq 1$ (i.e. $\sigma \neq O$), the number $A_n$ is still defined for all $n$ coprime to $m$. Therefore, for such $\sigma$, one could use the same argument that works for a non-torsion section, changing the limits over $n$ into limits over $n$ coprime to $m$.
\end{remark}

\begin{remark}
	To prove Theorem \ref{Prop:equality}, one may also use a different argument. Namely, one first proves equality (\ref{soughtequality}) when $r:B \rightarrow \P_1$ is defined over $\bar{\Q}$ using a height argument, and then uses a continuity argument to prove it for every $r$. We give here a sketch of these two steps.

	
	Assume that $r:B \rightarrow \P_1$ is defined over a number field $K$. One can prove in a straightforward way that: 
	\begin{equation}\label{Eq:approxsoughtequality}
	\lim_{n \to \infty}\frac{A^{\epsilon}_n}{n^2}=\int_{B \setminus r^{-1}(S_{\epsilon})} \sigma^*(\d \beta_1 \wedge \d \beta_2),
	\end{equation}
	where $S_{\epsilon}$ is a neighborhood of radius\footnote{With respect to some choice of a metric, which is irrelevant for our purposes. For instance, one may choose the Fubini-Study metric on $\P_1(\C)$.} $\epsilon$ of $S$, and 
	\[
	A^{\epsilon}_n:=\# \{t \in B(\C)\setminus r^{-1}(S_{\epsilon}): \sigma(t) \text{ is }n-\text{torsion in } E_t\}.
	\]
	
	Now, one proves that:
	
	\begin{equation}\label{Bound:Uniform}
		\left| \frac{A^{\epsilon}_n}{A_n}-1 \right|\leq \frac{C}{|\log \epsilon|}, \text{ where } C \in \R_+.
	\end{equation}

	Partitioning $A_n$ in Galois orbits, this becomes a consequence of the fact that the points $t \in \cup_{n \in \N}A_n$ are of bounded height in $\P_1(\bar{\Q})$ \footnote{This is a direct consequence of the well-known result of Tate \cite{Tate} that, if $\pi:E\rightarrow B$ is an elliptic fibration, and $P:B \rightarrow E$ is a non-torsion section, then the function $t \mapsto \hat{h}_{E_t}(P_t)$, is, up to a bounded constant, a Weil height on $B$.}, 
	and the fact\footnote{One way to prove this fact is to use the following easy result, which can be found, for instance, in \cite[Remark 3.10(ii)]{LectureNotesZannier}. Let $\xi \in L$, where $L\subset \C$ is a Galois number field of degree $d$ over $\Q$, then:
	\[
	\sum_{g \in \operatorname{Gal}(L/\Q)}|\log|\xi^g||\leq 2d\hat{h}(\xi).
	\]} that points $x \in \P_1(\bar{\Q})$ of height $\leq C'$ have at most $\frac{C'}{|\log \epsilon|}[K(x):K][K:\Q]$ conjugates of absolute value $< \epsilon$.
	
	By letting $\epsilon \to 0$ in (\ref{Eq:approxsoughtequality}) and using (\ref{Bound:Uniform}) to employ a uniform convergence argument, one gets the sought equality (\ref{soughtequality}) for $r$.
	
	When $r$ is not defined over a number field, let $K_0 \subset \C$ denote its minimal field of definition, and let $K=K_0\bar{\Q} \subset \C$. The field $K$ has finite transcendence degree, hence there exists an integral algebraic variety $X/\bar{\Q}$ such that $\bar{\Q}(X)\cong K$. By construction, up to restricting $X$ to a Zariski open subset, we may assume that there exists an algebraic family of morphisms
	\[
	r_t:B_t \rightarrow \P_1, \ t \in X,
	\]
	and sections:
	\[
	\sigma_t:B_t \rightarrow E\times_{(\pi,r_t)}B_t, \ t \in X,
	\]
	such that there exists $t_0 \in X(\C)$, such that $B_{t_0}=B$, $r_{t_0}=r$ and $\sigma_t=\sigma_{t_0}$. 
	
	We want to show now that, for any $t \in X(\C)$ (and hence, in particular, for $t=t_0$), the following equality holds:
	\begin{equation}\label{soughtequalityt}
		\hat{h}(\sigma_t) = \int_{B_t \setminus r_t^{-1}(S)} \sigma^*(\d \beta_1^t \wedge \d \beta_2^t),
	\end{equation}
	
	where, for a point $P \in B_t$, $\beta_1^t$, $\beta_2^t$ denote the Betti coordinates of $\sigma_t(P)$.
	Now, we know that, for each $t \in X(\bar{\Q})$, (\ref{soughtequalityt}) is true by the argument presented above in this Remark. Since $X(\bar{\Q})$ is dense (in the euclidean topology) in $X(\C)$, it is hence sufficient to show that both right and left hand side of (\ref{soughtequalityt}) are continuous in $t \in X(\C)$. 
	
	For the left hand side, this is an immediate consequence of the fact that the height may be expressed through an explicit intersection formula on a smooth proper model (see \cite[Section 11.8]{ellipticsurfaces}), hence a standard flatness argument tells us that it is constant (hence, continuous) for $t$ in a nonempty Zariski-open subset of $X$ (that automatically contains $t_0$ since this is, by construction, a generic point).
	
	For the right hand side, this follows from a dominated convergence argument, which proceeds as follows. Since the $(1,1)$-form $\d \beta_1 \wedge \d \beta_2$ diverges only when $\lambda$ approaches $0,1$ or $\infty$, it is sufficient to give a bound for $\d \beta_1 \wedge \d \beta_2$ in small disks around these three points, locally uniformly in $t \in X(\C)$. Without loss of generality, one may do so only for a small disk around $0$. Here, one may use results of Jones and Schmidt \cite{JS} to obtain the following bound in a circle $|\lambda| < \epsilon$, locally uniformly in $t \in X(\C)$:
	\begin{equation}\label{bound}
	\d \beta_1^t \wedge \d \beta_2^t= \O\left(\frac{1}{|\lambda|^2|\log \lambda|^4}\right)\d \lambda \wedge \d \bar{\lambda}. 
	\end{equation}
	Since the right hand side of (\ref{bound}) is an integrable $2$-form in the circle $|\lambda|<\epsilon$, this allows a dominated convergence argument to be employed to show that the right hand side of (\ref{soughtequalityt}) is continuous, hence reproving Theorem \ref{Prop:equality}.
\end{remark}

%
%

\section{Addendum: Comparison with a measure coming from dynamics}


In this addendum we compare the measure $\sigma^*(\d \beta_1 \wedge \d \beta_2)$ with a measure appearing in the following theorem of DeMarco and Mavraki \cite[Section 3]{DMM}:

\begin{theorem}[DeMarco, Mavraki]\label{Theo:demarcomavraki}
	Let $\pi:E \rightarrow B$ be an elliptic surface and $P : B \rightarrow E$ a non-torsion section, both defined over $\Q$. Let $S \subset E$ be the union of the finitely many singular fibers in $E$. There is a positive, closed $(1,1)$-current $T$ on $E(\C) \setminus S$ with locally continuous potentials such that ${T}|_{E_t}$ is the Haar measure on each smooth fiber, and $P^*T$ is equal to a measure $\mu_{P}$, that satisfy the following property. For any infinite non-repeating sequence of $t_n \in B(\bar{\Q})$, such that $\hat{h}_{E_{t_n}}(P_{t_n}) \rightarrow 0$ as $n \to \infty$, the discrete measures
	\[
	\frac{1}{\#\operatorname{Gal}(\bar{\Q}/\Q)t_n}\sum_{t \in \operatorname{Gal}(\bar{\Q}/\Q)t_n}\delta_{t_n}
	\]
	converge weakly on $B(\C)$ to $\mu_P$. \footnote{DeMarco and Mavraki \cite{DMM} proved this result also in the non-archimedean setting.}
\end{theorem}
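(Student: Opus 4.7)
The plan is to derive Theorem \ref{Theo:demarcomavraki} from the Yuan--Szpiro--Ullmo--Zhang arithmetic equidistribution theorem, applied to a suitably chosen semipositive adelic metrized line bundle $\overline{L}_P$ on a smooth projective completion $\overline{B}$ of $B$. The first step is to interpret the fiberwise N\'eron--Tate height as a Weil height on $\overline{B}$. By Tate's theorem, for a non-torsion section $P$ defined over a number field $K$, one has $\hat{h}_{E_t}(P_t)=h_{D_P}(t)+O(1)$ for an explicit $\Q$-divisor $D_P$ on $\overline{B}$: concretely, one chooses a regular proper arithmetic model of $\pi:E\to B$ over $\operatorname{Spec}\O_K$, extends $P$ and the zero section $O$ to integral sections $\overline{P},\overline{O}$, forms the Arakelov intersection class $\langle\overline{P},\overline{O}\rangle$ corrected by fibral components so as to be orthogonal to them, and restricts to the generic fiber. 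Its generic degree equals $\hat{h}(P)$, the canonical height of $P$ viewed as a point of $E(K(B))$, which is strictly positive because $P$ is non-torsion.

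The second step is to refine $L_P:=\O_{\overline{B}}(D_P)$ to a semipositive adelic metrized line bundle $\overline{L}_P$, and then to apply Yuan's theorem. At each place $v$ of $K$, Tate's dynamical averaging on each fiber $E_t(\overline{K_v})$ produces a canonical Green's function $g_v(\cdot,t)$ satisfying $g_v(nx,t)=n^2 g_v(x,t)$; evaluated at $P_t$ this yields, together with the fibral corrections from Step~1, the $v$-adic local component of $h_{\overline{L}_P}$. Semipositivity follows from the fact that this metric is a uniform limit of smooth semipositive metrics coming from $[n]^*$-pullbacks. By the Chambert-Loir construction, the archimedean curvature $c_1(\overline{L}_P)_\infty$ on $B(\C)$ is the Monge--Amp\`ere of the plurisubharmonic potential attached to the canonical Green's function, which by direct inspection coincides with $P^*T$, i.e.\ with $\mu_P$. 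Yuan's equidistribution theorem applied to the curve $\overline{B}$ with the semipositive adelic bundle $\overline{L}_P$ of positive self-intersection $\hat{h}(P)$ then yields that any non-repeating sequence $\{t_n\}\subset B(\overline{\Q})$ with $h_{\overline{L}_P}(t_n)\to 0$ has its Galois orbits weakly equidistributing to the normalized curvature measure, which up to the chosen normalization is $\mu_P$.

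The main obstacle is the construction of the semipositive adelic metric across the bad reduction locus $S$. The fiberwise canonical Green's functions blow up as $t$ tends to a singular fiber, and gluing them with the intersection-theoretic correction terms into a genuinely continuous semipositive adelic metric on all of $\overline{B}$ requires a case-by-case analysis of the singular fibers via Kodaira's classification at archimedean places, and an analogous analysis of the Berkovich dual graphs together with their Chambert-Loir measures at non-archimedean places. One must further verify that the fibral corrections chosen in Step~1 are precisely those needed to make the local heights globally compatible into a semipositive adelic metric; once this is achieved, the remaining appeal to Yuan's theorem is formal, and the archimedean identification of the curvature with $P^*T$ gives the stated measure.
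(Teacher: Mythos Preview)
The paper does not give its own proof of this statement: Theorem~\ref{Theo:demarcomavraki} is quoted from DeMarco--Mavraki \cite[Section 3]{DMM} and used as a black box. The paper's contribution in this addendum is the identification of the current $T$ appearing in that theorem with the $(1,1)$-form $\d\beta_1\wedge\d\beta_2$ (Corollary~\ref{Cor:dynamicalforT}), not a reproof of the equidistribution itself.

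That said, your outline is essentially the strategy DeMarco and Mavraki themselves follow. They do interpret the fiberwise N\'eron--Tate height as an adelic height on the base via Tate's limiting construction, build the semipositive adelic metrized line bundle from the local canonical heights, and then invoke the Yuan (and Thuillier, for curves) equidistribution machinery. Your identification of the main obstacle is also accurate: the technical heart of \cite{DMM} is precisely the verification that the metric extends across the bad fibers to a genuinely semipositive adelic metric, which they carry out by controlling the local canonical heights near singular fibers (archimedean and non-archimedean) and matching them against the fibral correction terms. Once that is done, Yuan's theorem applies directly and the archimedean curvature is $P^*T=\mu_P$ as you say. So your proposal is a faithful high-level sketch of the original proof rather than an alternative to anything in the present paper.
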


In particular, we will prove that the current $T$ is equal to the $(1,1)$-current defined by the $(1,1)$-form $\d \beta_1 \wedge \d \beta_2$ on $E \setminus \mathcal{S}$ (we warn the reader that, whereas in the last section the form $\d \beta_1(\lambda) \wedge \d \beta_2(\lambda)$ we worked with was defined on the basis of the elliptic fibration $B$, the $(1,1)$-form $\d \beta_1 \wedge \d \beta_2$ we are working with now is define on the whole fibration space $E \setminus \mathcal{S}$). A direct consequence of this is that the pullback measure $\sigma^*(\d \beta_1 \wedge \d \beta_2)$ is characterized by the arithmetic condition appearing in Theorem \ref{Theo:demarcomavraki}.

The restriction of the current $T$ to the open set $E \setminus (\mathcal{S} \cup O)$ (where we are denoting by $O$, with a slight abuse of notation, the image of the zero section of $\pi:E \rightarrow B$), can be written down explicitly as follows (\cite[Section 3.3]{DMM}):
\begin{equation}\label{Expr:11current}
	T=\frac{1}{2\pi i}\ \d \d^c {H_N},
\end{equation}
where $H_N$ denotes the N\'eron local (archimedean) height function. We recall that the following formula holds \cite[p. 466]{silverman1994advanced}:
\begin{equation}\label{Expr:altezzalocale}
	H_N=-\log |e^{-\frac{1}{2}z\eta_{\lambda}(z)}\sigma_{\lambda}(z)\Delta(\Lambda_{\lambda})^{\frac{1}{12}}|.
\end{equation}
Here $\Lambda_{\lambda}$ denotes a lattice in $\C$ such that $\C/\Lambda_{\lambda} \cong E_{\lambda} := \pi^{-1}(\lambda)$, $z$ denotes the complex variable of $\C/\Lambda_{\lambda}$ and $\eta_{\lambda}$ and $\sigma_{\lambda}$ indicate the semiperiod function $\eta$ and, resp., the function $\sigma$ (as defined in \cite[VI.3.1,I.5.4]{silverman1994advanced}) associated to the lattice $\Lambda_{\lambda}$. Since $\sigma_{\lambda}(z)\Delta(\Lambda_{\lambda})^{\frac{1}{12}}$ is a holomorphic function in both variables $\lambda$ and $z$, this gives the following expression for $T$:
\begin{equation}\label{Expr:11current2}
	T=\frac{1}{4\pi i}\ \d \d^c(\Re (z\eta_{\lambda}(z))).
\end{equation}



We shall prove that the current $T$ matches the $2$-form $\d \beta_1 \wedge \d \beta_2$ in two different ways: through a direct calculation (of which we give a sketch in Remark \ref{Rmk:calculationforT}) and through a dynamical argument (in Corollary \ref{Cor:dynamicalforT}).

We notice that both $T$ and $\d \beta_1 \wedge \d \beta_2$ restrict to the Haar measure on the fibers, normalized in such a way that the area of each fiber is $1$. 

\begin{remark}\label{Rmk:calculationforT}
	Let us give a sketch of a calculation that shows that $T= \d \beta_1 \wedge \d \beta_2$. This proves also \eqref{riscrittura}.
	One first checks (using (\ref{Expr:11current2})) that:
	\begin{equation}\label{Expr:Traw}
	T=\frac{1}{4\pi i}\partial \bar{\partial} \left[ \begin{pmatrix}
	z \\ \bar{z}
	\end{pmatrix}^\intercal
	\begin{pmatrix}
	\eta_1 & \eta_2 \\
	\bar{\eta_1} & \bar{\eta_2}
	\end{pmatrix}
	\begin{pmatrix}
	\rho_1 & \rho_2 \\
	\bar{\rho_1} & \bar{\rho_2}
	\end{pmatrix}^{-1}
	\begin{pmatrix}
	z \\ \bar{z}
	\end{pmatrix}
	\right],
	\end{equation}
	
	where $\rho_i, \eta_i, i=1,2$ denote, respectively, the periods and the quasi-periods of $E_{\lambda}$, and
	
	\begin{equation}\label{Beta}
	\begin{pmatrix}
	\d \beta_1 \\ \d \beta_2
	\end{pmatrix}=
	A^{-1}\left(\d(A)A^{-1}\begin{pmatrix}
	z \\ \bar{z}
	\end{pmatrix}+\begin{pmatrix}
	\d z \\ \d \bar{z}
	\end{pmatrix}\right),  \text{ where } \ A:= \begin{pmatrix}
	\rho_1 & \rho_2 \\
	\bar{\rho_1} & \bar{\rho_2}
	\end{pmatrix}.
	\end{equation}
	Let $\gamma_{\lambda}$ be the $1$-form $\d \lambda/2\lambda$. Using the relations $\frac{\d \rho_i}{\d \lambda}=\frac{1}{2\lambda} \eta_i$, we see that:
	\[
	\d (A)A^{-1}= \begin{pmatrix}
	\gamma_{\lambda} & 0 \\ 0 & \overline{\gamma_{\lambda}}
	\end{pmatrix}\begin{pmatrix}
	\eta_1 & \eta_2 \\
	\bar{\eta_1} & \bar{\eta_2}
	\end{pmatrix}
	\begin{pmatrix}
	\rho_1 & \rho_2 \\
	\bar{\rho_1} & \bar{\rho_2}
	\end{pmatrix}^{-1}.
	\] 
	To simplify the notation, we define:
	\[ 
	C:= \begin{pmatrix}
	\ C_1 \ \ \\  \ C_2 \ \
	\end{pmatrix}:= \begin{pmatrix}
	C_{11} & C_{12} \\ C_{21} & C_{22}
	\end{pmatrix}
	:= \begin{pmatrix}
	\eta_1 & \eta_2 \\
	\bar{\eta_1} & \bar{\eta_2}
	\end{pmatrix}
	\begin{pmatrix}
	\rho_1 & \rho_2 \\
	\bar{\rho_1} & \bar{\rho_2}
	\end{pmatrix}^{-1}
	\]
	\[
	=\frac{1}{\det (A)}
	\begin{pmatrix}
	\eta_1 \bar{\rho_2}-\eta_2\bar{\rho_1} & 2\pi i \\ 2 \pi i& -\bar{\eta_1}\rho_2+\bar{\eta_2}\rho_1
	\end{pmatrix}.\footnote{We have used here the Legendre relation $\rho_2\eta_1-\rho_1\eta_2= \pm 2 \pi i$}
	\]
	We wish to express now both $\d \beta_1 \wedge \d \beta_2$ and $T$ as linear combinations (with coefficients in $\mathcal{C}^{(\infty)}(\pi^{-1}(U))$, $U \subset \P_1 \setminus \{0,1,\infty\}$ being an open simply connected domain) of the forms $\gamma_{\lambda} \wedge \overline{\gamma_{\lambda}}, \gamma_{\lambda} \wedge \d \bar{z},\d z \wedge \overline{\gamma_{\lambda}},\d z \wedge \d \bar{z}$. We note that a (smooth) $(1,1)$-form in $\Omega^{1,1}(\pi^{-1}(U))$ can be written uniquely as a linear combination of these forms, since the two $1$-forms $\gamma_{\lambda}$ and $\d z$ span the cotangent bundle over each point in $\pi^{-1}(U)$. Hence it will suffice to check that the two expressions for $\d \beta_1 \wedge \d \beta_2$ and $T$ are the same.
	
	We first do this for $\d \beta_1 \wedge \d \beta_2$.
	
	Namely, using (\ref{Beta}), one deduces that:
	\begin{equation}\label{Betaform}
	\d \beta_1 \wedge \d \beta_2=\frac{1}{\det (A)}\left[(C_{11}z+C_{12}\bar{z}){\gamma_{\lambda}}+\d z\right]\wedge\left[(C_{21}z+C_{22}\bar{z}){\overline{\gamma_{\lambda}}}+\d \bar{z}\right].
	\end{equation}
	
	We turn now to $T$.
		
	One can verify that:
	\[
	\partial C_2=\frac{-2\pi i{\gamma_{\lambda}}}{\det(A)}C_1, \quad \bar{\partial} C_1=\frac{-2\pi i\overline{\gamma_{\lambda}}}{\det(A)}C_2,
	\]
	\[
	\partial \bar{\partial} C_1=\frac{2\pi i}{\det (A)}[-C_{21}C_1-C_{11}C_2]\gamma_{\lambda}\wedge \overline{\gamma_{\lambda}}, 
	\]
	\[  \bar{\partial} \partial C_2=\frac{2\pi i}{\det (A)}[C_{12}C_2+C_{22}C_1]\gamma_{\lambda}\wedge \overline{\gamma_{\lambda}}.
	\]
	
	Using the Leibniz rule in (\ref{Expr:Traw}), and the above expressions, one gets:
	\[
	T=\frac{1}{4\pi i}\cdot \frac{1}{\det (A)}\left[ 4 \pi i\d z \wedge \d \bar{z}+2 \begin{pmatrix}
	\d z \\ \d \bar{z}
	\end{pmatrix}\wedge \begin{pmatrix}
	\  - 2 \pi iC_2 \overline{\gamma_{\lambda}} \ \ \\  \ { 2 \pi i }C_1 \gamma_{\lambda} \ \
	\end{pmatrix}\begin{pmatrix}
	 z \\  \bar{z}
	\end{pmatrix}\right.
	\]
	\begin{equation}\label{Expr:T}
		\left.+{2 \pi i}\begin{pmatrix}
		z \\ \bar{z}
		\end{pmatrix}\begin{pmatrix}
		\ C_{21}C_1+C_{11}C_2 \ \  \\ C_{12}C_2+C_{22}C_1
		\end{pmatrix}\begin{pmatrix}
		z \\ \bar{z}
		\end{pmatrix}\gamma_{\lambda}\wedge \overline{\gamma_{\lambda}}\right].
	\end{equation}
	
	An easy term-by-term comparison reveals that the expressions (\ref{Expr:T}) and (\ref{Betaform}) are the same.
\end{remark}

We focus now on giving a more conceptual proof of the equality $T=\d \beta_1 \wedge \d \beta_2$.

\begin{proposition}\label{Prop:uniqueform}
	Let $D \subset \P_1\setminus \{0,1,\infty\}$ be an open simply connected domain. The $(1,1)$-form $\d \beta_1 \wedge \d \beta_2$, is, up to scalar multiplication, the unique closed $2$-form in $\Omega_0^2(\pi^{-1}(D))$\footnote{We are using the notation $\Omega_0^2$ to denote continuous $2$-forms. } that satisfies $[2]^*\omega=4\omega$, where  $[2]:\pi^{-1}(D) \rightarrow \pi^{-1}(D)$ denotes the endomorphism of multiplication by $2$ on the fibers.
\end{proposition}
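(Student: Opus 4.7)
The plan is to use coordinates adapted to the Betti map. Since $D$ is simply connected, the Betti coordinates $(\beta_1,\beta_2):\pi^{-1}(D)\to (\R/\Z)^2$ are globally defined real-analytic functions, and together with the real and imaginary parts of $\lambda$ they yield real-analytic coordinates on the total space $\pi^{-1}(D)$. Consequently, the six complex-valued $2$-forms
\begin{equation*}
d\lambda\wedge d\bar\lambda,\quad d\lambda\wedge d\beta_i,\quad d\bar\lambda \wedge d\beta_i\ (i=1,2),\quad d\beta_1\wedge d\beta_2
\end{equation*}
span the bundle of complex $2$-forms pointwise, so any continuous $\omega \in \Omega_0^2(\pi^{-1}(D))$ decomposes uniquely as
\begin{equation*}
\omega = A\,d\lambda\wedge d\bar\lambda + \sum_{i=1,2}(B_i\,d\lambda\wedge d\beta_i + C_i\,d\bar\lambda\wedge d\beta_i) + F\,d\beta_1\wedge d\beta_2,
\end{equation*}
with continuous coefficient functions $A, B_i, C_i, F$.

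Next, the endomorphism $[2]$ preserves fibers, hence $[2]^* d\lambda = d\lambda$ and $[2]^* d\bar\lambda = d\bar\lambda$; on each fiber it sends $\beta_i\mapsto 2\beta_i \pmod{\Z}$, so $[2]^* d\beta_i = 2\,d\beta_i$. Expanding the relation $[2]^*\omega = 4\omega$ in the above frame produces the termwise functional equations
\begin{equation*}
A\circ [2] = 4A,\quad B_i\circ[2] = 2B_i,\quad C_i\circ[2] = 2C_i,\quad F\circ[2] = F.
\end{equation*}
For the first five, iteration gives $A([2]^n p)=4^n A(p)$ and analogous $2^n$-growth for $B_i, C_i$ at any $p\in \pi^{-1}(D)$; but the forward orbit $\{[2]^n p\}$ stays in the compact fiber $E_{\pi(p)}$, on which the coefficient is continuous and hence bounded, so the exponentially growing left-hand sides force $A\equiv B_i\equiv C_i\equiv 0$.

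It remains to analyze the continuous function $F$ satisfying $F\circ[2] = F$. I would argue that $F|_{E_\lambda}$ is constant on each fiber via a Fourier expansion in the Betti coordinates: the $[2]$-invariance sends the $(m,n)$-Fourier coefficient of $F|_{E_\lambda}$ to $c_{m/2,n/2}$ when $(m,n)\in 2\Z^2$ and to $0$ otherwise, so iterating forces $c_{m,n}=0$ for all $(m,n)\ne(0,0)$ (equivalently, invoke topological transitivity of the doubling map on the real $2$-torus). Thus $F = F(\lambda)$ depends only on the base, and the hypothesis $d\omega=0$ (read in the sense of currents if $\omega$ is only continuous) becomes $dF\wedge d\beta_1\wedge d\beta_2 = 0$, whence $dF = 0$; by connectedness of $D$, $F$ is a constant, and $\omega$ is a scalar multiple of $d\beta_1\wedge d\beta_2$. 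I expect the only delicate point to be the global well-definedness of the frame $\{d\beta_1, d\beta_2\}$ on all of $\pi^{-1}(D)$ and the legitimacy of the termwise identification under $[2]^*$, both of which rest crucially on the simple-connectedness hypothesis on $D$.
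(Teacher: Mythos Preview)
Your proof is correct and follows essentially the same route as the paper's: decompose $\omega$ in the frame $\{d\lambda,d\bar\lambda,d\beta_1,d\beta_2\}$, use the weights of $[2]^*$ together with boundedness on the compact fibers to kill all coefficients except the one in front of $d\beta_1\wedge d\beta_2$, then use $[2]$-invariance to make that coefficient constant along fibers and closedness to make it constant on the base. Your version is slightly more explicit in two places---you spell out the ergodic/Fourier reason why a continuous $[2]$-invariant function on the torus is constant, and you flag that $d\omega=0$ should be read distributionally when $\omega$ is merely continuous---but these are refinements of the same argument rather than a different approach.
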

\begin{proof}
	Let $\omega \in \Omega_0^2(\pi^{-1}(D))$ be a $2$-form such that $[2]^*\omega=4 \omega$.
	
	Let:
	\[
	\omega=\sum_{i,j}\alpha_{ij} \gamma_i\wedge \gamma_j, 
	\quad \text{where } i,j \in \{1,2,\lambda,\bar{\lambda}\} \text{, } \ \begin{cases}
	\gamma_i=\d \beta_i \ \ i=1,2 \\
	\gamma_i=\d \lambda \  \ i=\lambda \\
	\gamma_i= \d \bar{\lambda} \ \ i=\bar{\lambda}
	\end{cases},
	\]
	and $\alpha_{ij} \in \mathcal{C}^0(\pi^{-1}(D))$.
	
	
	We notice that: 
	\[
	\frac{1}{4}[2]^*\omega=\sum_{ij}\alpha_{ij}([2]P) 2^{\delta{i}+\delta{j}-2}\gamma_i \wedge \gamma_j,
	\]
	where $\delta_i=1$ when $i=1,2$, and $\delta_i=0$ otherwise.
	We know that $\frac{1}{4}[2]^*\omega=\omega$, and, hence:
	\begin{equation}\label{Eq:equivariantforms}
	\alpha_{ij}(P)=2^{\delta{i}+\delta{j}-2}\alpha_{ij}([2]P), \quad \text{for all } i,j \in \{1,2,\lambda,\bar{\lambda}\} \text{ and } P \in \pi^{-1}(D').
	\end{equation}
	However, for each $x \in D$, restricting both hand sides of equation (\ref{Eq:equivariantforms}) to the fiber $\pi^{-1}(x)$, and then taking the $\infty$-norm yields:
	\[
	\max_{P \in \pi^{-1}(x)} |\alpha_{ij}(P)|=2^{\delta{i}+\delta{j}-2}\max_{P \in \pi^{-1}(x)} |\alpha_{ij}(P)|\leq 2^{-1}\max_{P \in \pi^{-1}(x)} |\alpha_{ij}(P)| \ \text{if } \{i,j\}\neq \{1,2\}.
	\]
	Hence, for $\{i,j\}\neq \{1,2\}$, $\alpha_{ij}\equiv 0$. Therefore, we have that:
	\[
	\omega=\alpha_{12}\d \beta_1 \wedge \d \beta_2,
	\]
	\[
	\alpha_{12}([2]P)=\alpha_{12}(P) \quad \forall P \in \pi^{-1}(D').
	\]
	
	Since the function $\alpha_{12}$ is continuous, this implies that $\alpha_{12}$ is constant on the fibers of $\pi$, and hence it depends just on $\lambda$ and $\bar{\lambda}$. Since $\omega$ is closed by hypothesis, it follows that $\alpha_{12}$ is constant in $\lambda$ and $\bar{\lambda}$ as well
	. Hence $\omega = c \ \d \beta_1 \wedge \d \beta_2$, with $c \in \C$.
\end{proof}

\begin{remark}\label{Rmk:hypothesis}
	In Proposition \ref{Prop:uniqueform}, one can replace the \textit{closed} hypothesis with the hypothesis that the restriction of $\omega$ to the fibers of $\pi$ is the normalized Haar measure. We note that the two different hypothesis do not \textit{a priori} imply each other. On the other hand, the proof that was presented here works in both cases. In fact, the only point in which we used the closed assumption was to deduce that the function $\alpha_{12}$ is constant. However, this is automatic if $\omega$ restricts to the normalized Haar measure on the fibers.
\end{remark}

\begin{remark}
	In Proposition \ref{Prop:uniqueform}, the hypothesis that $\omega$ is continuous on $\pi^{-1}(D)$ is crucial. The $(1,1)$-form $\d z \wedge \d \bar{z}$, which is not a constant multiple of $\d \beta_1 \wedge \d \beta_2$, satisfies all the hypothesis of the proposition, except that it is not a well-defined continuous $2$-form on $\pi^{-1}(D)$. This last fact may be easily seen by noticing that summing a period $\rho(\lambda)$ to $z$ changes the $2$-form $\d z \wedge \d \bar{z}$ by a (non-zero) term $\rho'(\lambda)\overline{\rho'(\lambda)}\d \lambda \wedge \d \bar{\lambda}+\overline{\rho'(\lambda)} \d z \wedge  \d \bar{\lambda}- \rho'(\lambda) \d \bar{z} \wedge \d \lambda $. 
\end{remark}

\begin{corollary}\label{Cor:dynamicalforT}
	The restriction to $E \setminus \mathcal{S}$ of the $(1,1)$-current $T=\frac{1}{2\pi i}\d \d^c {H_N}$ is equal to the current associated to the $(1,1)$-form $\d \beta_1 \wedge \d \beta_2$.
\end{corollary}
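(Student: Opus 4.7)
The plan is to invoke Proposition \ref{Prop:uniqueform} (together with Remark \ref{Rmk:hypothesis}) on each chart $\pi^{-1}(D)$ obtained from a simply connected domain $D \subset \P_1 \setminus \{0,1,\infty\}$. To that end I would check three properties of $T|_{\pi^{-1}(D)}$: that it is represented by a smooth $(1,1)$-form, that its restriction to each smooth fiber is the normalized Haar measure, and that it satisfies the functional equation $[2]^* T = 4T$. The proposition then forces $T|_{\pi^{-1}(D)} = c_D\, \d \beta_1 \wedge \d \beta_2$ for some scalar $c_D \in \C$, and comparing the total masses on a single fiber (both equal to $1$) pins down $c_D = 1$. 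Since such charts cover $E \setminus \mathcal{S}$, the identity of currents holds globally.

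The fiberwise Haar property is already recorded in the text preceding Remark \ref{Rmk:calculationforT}. The $[2]$-equivariance is a one-line calculation from \eqref{Expr:11current2}: since $\eta_\lambda$ is the $\R$-linear extension of the quasi-period homomorphism, $\eta_\lambda(2z) = 2\eta_\lambda(z)$, so $(2z)\eta_\lambda(2z) = 4\,z\eta_\lambda(z)$, whence $[2]^*\Re(z\eta_\lambda(z)) = 4\,\Re(z\eta_\lambda(z))$; applying $\tfrac{1}{4\pi i}\d\d^c$ gives $[2]^* T = 4T$.

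The subtlest step, and the one I would expect to be the main obstacle, is establishing that $T$ really is represented by a smooth $(1,1)$-form on the entirety of $\pi^{-1}(D)$, including across the zero section $O$. The identity \eqref{Expr:11current} asserts $T = \tfrac{1}{2\pi i}\d\d^c H_N$ only on $E \setminus (\mathcal{S} \cup O)$, and $H_N$ blows up like $-\log|\sigma_\lambda(z)|$ along $O$, so na\"ively one would expect a Poincar\'e--Lelong contribution supported on $O$. The way out is the reformulation \eqref{Expr:11current2}: since $\sigma_\lambda(z)\Delta(\Lambda_\lambda)^{1/12}$ is holomorphic in $(\lambda,z)$, its log-modulus differs from a smooth potential only by a current supported on $O$; because the De Marco--Mavraki current $T$ is specified to have \emph{continuous} potentials, that singular contribution must be absent from $T$, and the smooth potential $\tfrac12\Re(z\eta_\lambda(z))$ represents $T$ on all of $\pi^{-1}(D)$. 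Once this smoothness is in hand, Proposition \ref{Prop:uniqueform} finishes the argument exactly as outlined above.
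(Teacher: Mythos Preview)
Your proposal is correct and follows the paper's overall plan: verify that $T$ is represented by a continuous form on $\pi^{-1}(D)$ satisfying $[2]^*T=4T$, apply Proposition~\ref{Prop:uniqueform} (via Remark~\ref{Rmk:hypothesis}), and pin down the scalar using the fiberwise Haar property. The two sub-arguments, however, are handled differently. For $[2]^*T=4T$, the paper invokes the duplication relation $H_N([2]P)=4H_N(P)-\log|2y|+\tfrac14\log|\Delta_\lambda|$ and kills the holomorphic corrections with $\d\d^c$; you instead read it off directly from \eqref{Expr:11current2} and the $\R$-linearity $\eta_\lambda(2z)=2\eta_\lambda(z)$. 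For smoothness across $O$, the paper bootstraps: knowing $T$ is smooth off $O$ and that $[2]^*T=4T$, it looks near a nontrivial $2$-torsion section $\sigma_2$ (where $4T$ is already smooth, hence so is $[2]^*T$), and then transports smoothness to $O$ via the local biholomorphism $[2]$ from a neighborhood of $\sigma_2$ to one of $O$. Your route is more direct: you exhibit the smooth local potential $\tfrac12\Re(z\eta_\lambda(z))$ extending across $O$, so the only possible discrepancy between $T$ and this smooth form is a current supported on $O$; the continuous-potential hypothesis on $T$ then forces that discrepancy to vanish. This last step is a removable-singularities argument (a continuous function that is pluriharmonic off a complex hypersurface is pluriharmonic everywhere, since such hypersurfaces are pluripolar), and it would be worth stating explicitly rather than leaving it implicit in the phrase ``that singular contribution must be absent.'' Either way, your approach avoids the bootstrap and is arguably cleaner.
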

\begin{proof}
	Let $D \subset \P_1\setminus S$ be an open relatively compact simply connected domain. We notice that $T=\frac{1}{2\pi i}\d \d^c {H_N}$ is obviously closed. Moreover, we have that:
	\[
	([2]^*T)(P)=\frac{1}{2\pi i}\d \d^c (H_N([2]P)), \ \forall\  P \in \pi^{-1}(D).
	\]
	We have the following well-known equality (see e.g. \cite[Theorem VI.1.1]{silverman1994advanced}):
	\[
	H_N([2]P)=4H_N(P)-\log |2y|+\frac{1}{4}\log |\Delta_\lambda|,
	\]
	where a Weierstrass form for the elliptic curve $E_\lambda$ is assumed to have been fixed. Since we are working in the Legendre family, we may, of course, choose the Legendre form.
	
	Since $\d \d^c \log |f|=0$, for any holomorphic function $f$, this implies that:
	\[
	([2]^*T)(P)=\frac{1}{2\pi i}\d \d^c (H_N([2]P)))=4\frac{1}{2\pi i}\d \d^c H_N(P)=4T, \ \forall\  P \in \pi^{-1}(D).
	\]
	
	We prove now that ${T}|_{E\setminus \mathcal{S}}$ is smooth. We already know that $T$ is smooth away from the zero-section $O$. Let $\sigma_2:D \rightarrow E$ denote a section of $\pi$ of order $2$ (i.e. $[2]\sigma_2=O$, $\sigma_2 \neq O$). By restricting the equality $([2]^*T)(P)=4T(P)$ to a neighborhood of the section $\sigma_2$, we get a smooth $(1,1)$-current on the right hand side, hence the left hand side has to be smooth as well. Since the map $[2]:E\setminus \mathcal{S}  \rightarrow E\setminus \mathcal{S}$ defines a biholomorphism between a neighborhood of $\sigma_2$ and a neighborhood of $O$ (since we are restricting to the good reduction locus), this in turn tells us that the restriction of $T$ to a neighborhood of $O$ is smooth. Hence the restriction of $T$ to $E \setminus \mathcal{S}$ is smooth, and therefore it is represented by a $(1,1)$-form, which, with a slight abuse of notation, we will still denote by $T$.
	
	Hence, by Proposition \ref{Prop:uniqueform}\footnote{Here, instead of using Proposition \ref{Prop:uniqueform}, we could use its modified version, as in Remark \ref{Rmk:hypothesis}.}, $T=c \  \d \beta_1 \wedge \d \beta_2$, where $c \in \C$ is a constant. Since, as remarked at the beginning of this subsection, both $T$ and $\d \beta_1 \wedge \d \beta_2$ restrict to the normalized Haar measure on the fibers of $\pi$, $c=1$, as we wanted to prove.
\end{proof}

\subsection{An alternative proof of Theorem \ref{Prop:equality}}

As mentioned in the beginning of subsection \ref{Hic}, we sketch here another argument, using the work of DeMarco and Mavraki, that reproves Theorem \ref{Prop:equality}.

Let $D_{E}(\sigma):= \sum_{\gamma \in B(\bar{K})} \hat{\lambda}_{E, { ord }_{\gamma}}(\sigma) \cdot(\gamma)$, where \(\hat{\lambda}_{E, { ord }_{\gamma}}(\sigma)\) is the local canonical height of the point corresponding to \(\sigma\) on the elliptic curve \(E\) over
\(k=\C(B)\) at the place \( ord _{\gamma}\) (i.e. the place corresponding to the point $\gamma \in B$), for each \(\gamma \in B(\C) .\) The degree of \(D_{E}(\sigma)\) is equal to \(\hat{h}_{E}(\sigma) .\)  Then, DeMarco and Mavraki \cite[sec. 3,4]{DMM} prove that (keeping the notation above): \[\c_1(D_E(\sigma))=\sigma^*T=\sigma^*{\d \beta_1 \wedge \d \beta_2},\] where the last equality is Corollary \ref{Cor:dynamicalforT}, i.e. the comparison result proven in this section. Hence, Theorem \ref{Prop:equality} becomes a consequence of Wirtinger's formula applied to the divisor $D_E(\sigma)$. 



\begin{remark}\label{Rmk:Higher_dimension}
	In higher dimension (i.e. for a fibration $\mathcal{A}\rightarrow S$ in abelian varieties, where $\dim \mathcal{A} = 2\cdot \dim (S)=2g >1$) the situation is slightly different. For instance, one could not expect a formula as (\ref{soughtequality}) because $\hat{h}([n] \sigma)$ is always quadratic in $n \in \Z$, while $\int_{B \setminus r^{-1}(S)} ([n] \sigma)^*(\d \beta_1 \wedge \dots \wedge \d \beta_{2g})$ has degree $2g>2$ in $n \in \Z$.
\end{remark}

\begin{section}{Quasi-integral points}   \label{Sec4}

We start by interpreting Theorem \ref{Thm:Theorem2.6} in terms of points on the elliptic curve that are almost integral. But from a certain point of view the result is not optimal, and the main purpose of this section is to remedy this defect.
	
	\subsection{Introduction}
	
	Theorem \ref{Thm:Theorem2.6} may be reformulated in terms of heights on ${\C}(B)$. We may identify the section $\sigma$ with a point $(\xi,\eta)$ on $E({\C}(B))$. The arguments in section \ref{cmb} will make it clear that $m_\sigma(b)=-v(\xi)/2$, at least if $-v(\xi) \geq 0$ is large enough, where $v$ is the valuation corresponding to $b$. 
	
	For the Legendre model we will also see that
	\begin{equation}\label{456*a}
	h(\Xi(\sigma)) \leq 4h(\xi)+c_0
	\end{equation}
	for the natural height $h(\xi)=\sum_v\max\{0,-v(\xi)\}$ on ${\C}(B)$ and some $c_0$ (here absolute). So for this model we deduce from Theorem \ref{Thm:Theorem2.6} that $|\xi|_v=\exp(-v(\xi))$ satisfies
	\begin{equation}\label{456**a}
	|\xi|_v \leq e^{2c_0}H(\xi)^8
	\end{equation}
	with
	\begin{equation}\label{4562.5}
	H(\xi)=\exp(h(\xi))=\prod_v\max\{1,|\xi|_v\}.
	\end{equation}
	But of course (\ref{456**a}) is worse than the trivial
	\begin{equation}\label{456t}
	|\xi|_v \leq H(\xi).
	\end{equation}
	And we will also see that the 4 in (\ref{456*a}) cannot be avoided.
	
	Further we will see (for general models) that for sections $\sigma=n\sigma_0$ with $\sigma_0$ fixed, then thanks to $m_\sigma(b)=m_{\sigma_0}(b)$, Theorem \ref{Thm:Theorem2.6} gives bounds for $m_{\sigma}(b)$ independent of $n$; and in fact similar remarks hold for linear combinations like $n\sigma_0+m\tau_0$. Of course $E({\C}(B))$ is finitely generated so every $(\xi,\eta)$ corresponds to such a linear combination; but as finding a basis for the Mordell-Weil group remains an ineffective procedure, this does not provide an effective non-trivial improvement of (\ref{456**a}) in general.
	
	The main object of the present section is to provide such an improvement. We shall drop the geometric terminology and replace ${\C}(B)$ by a function field $\cal K$ of transcendence degree 1 over an algebraically closed field $k$ of zero characteristic.
	
	Thus let $E$ be an elliptic curve over $\cal K$, defined by say
	$$y^2=x^3+ax^2+bx+c$$
	(now without danger of confusing a coefficient with a point of some $B$). As in section 1, we assume (just for convenience) that the cubic factorizes completely over $\cal K$.
	
	Now an argument of Manin \cite{M} (see also Voloch \cite{JFV}) using formal groups shows that (\ref{456t}) can be replaced by
	\begin{equation}\label{4562}
	|\xi|_v \leq C_1
	\end{equation}
	where $C_1$ depends only on $E$ and $v$. However (\ref{4562}) is again not effective, and not just because of Mordell-Weil.
	
	In fact it is already implicit in the literature that an effective bound
	\begin{equation}\label{456tt}
	|\xi|_v \leq C_2H(\xi)^\theta
	\end{equation}
	holds for some absolute $\theta<1$ with $C_2$ depending only on $E$ and $\cal K$. Take for simplicity ${\cal K}$ as $\C(t)$ and suppose we have absolute constants $\gamma, \delta$ such that
	\begin{equation}\label{INT}
	\deg X \leq \gamma\max\{\deg A,\deg B, \deg C\}+\delta
	\end{equation}
	for all ``integral'' $X,Y$ in $\C[t]$ with $Y^2=X^3+AX^2+BX+C$. Then writing $\xi=X/Z^2,\eta=Y/Z^3$ for our point $(\xi,\eta)$ of $E(\C(t))$, we deduce easily (\ref{456tt}) for $\theta=1-{1 \over 3\gamma}$.
	
	Now (\ref{INT}) has been known for some time, and for example Theorem 6 of Mason \cite{RCM} (p.30) leads to $\theta={155\over156}$ (for general $\cal K$). But this procedure is wasteful, and a more detailed direct analysis yields $\theta={7 \over 8}$. Here the arguments use the $abc$ inequality for function fields, whereas our arguments use a refined version (due to Wang) involving carefully chosen $abcd\ldots$.
	
	Here is our improvement of (\ref{456t}) and (\ref{456tt}). To highlight the effectivity we give a completely explicit bound depending for example on a height
	$$H(E)=\prod_v\max\{1,|a|_v^6,|b|_v^3,|c|_v^2\}.$$ 
	Crucial for certain applications will be the fact that it no longer depends on the valuation $v$ in (\ref{456t}). From now on all valuations will be supposed to have value group $\bf Z$ when written additively.
	\begin{theorem}\label{qi}
		Given an elliptic curve $E$ over a function field $\cal K$ as above, and any $\e>0$, there is an effective constant $C=C(E,\cal K,\e)$, depending only on $E,\cal K,\e$, such that
		$$|\xi|_v \leq CH(\xi)^\e$$
		for any $(\xi,\eta)$ on $E(\cal K)$ and any valuation $v$ on $\cal K$. In fact if $\e \leq 1/16$ then we may take $C=\left(e^gH(E)\right)^\rho$
		where $g$ is the genus of $\cal K$, and $\rho=2^{10000/\e^2}$.
	\end{theorem}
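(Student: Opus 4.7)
The plan is to combine the Siegel--Mahler method of division in the Mordell--Weil group with the effective function-field Roth inequality established earlier in this section (adapted from Wang). Heuristically, a very negative $v(\xi)$ means the point $P=(\xi,\eta)$ is $v$-adically close to the origin $O$ of $E$; after dividing $P$ by a large integer $n$ in a suitably enlarged field $\mathcal{L}$, the resulting point $Q$ (with $nQ=P$) must be $w$-adically close to some $n$-torsion point of $E$, for a place $w$ of $\mathcal{L}$ above $v$. This provides a $w$-adic approximation of the algebraic $x$-coordinate of that torsion point by $x(Q)$, whose height is only of order $h(\xi)/n^2$. The effective Roth bound then forbids an approximation that is too sharp, and translating back yields $|\xi|_v\leq C H(\xi)^{\e}$.

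\textbf{Key steps.} Fix an integer $n$ (to be chosen of order $1/\sqrt{\e}$) and pass to the algebraic extension $\mathcal{L}=\mathcal{K}(E[n],Q)$; standard bounds give $[\mathcal{L}:\mathcal{K}]\ll n^6$, while Riemann--Hurwitz controls $g_\mathcal{L}$ explicitly in $g$ and $n$. Fix a place $w|v$ of $\mathcal{L}$. Since $[n]:E\to E$ is \'etale in characteristic $0$, there is a unique $T_j\in E[n]$ with $Q$ $w$-close to $T_j$; expanding $[n]$ in the analytic local parameter $t=-x/y$ at $O$ one gets
\[ |x(Q)-x(T_j)|_w \;\asymp\; |\xi|_v^{-1/2}, \]
with constants explicit in $H(E)$ and $n$. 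The canonical height scales as $\hat h(Q)=\hat h(P)/n^2$, so $H_\mathcal{L}(x(Q))\leq H(\xi)^{1/n^2}\cdot C_1(E,n)$. Applying the effective Roth inequality to the approximation of the algebraic $x(T_j)\in\overline{\mathcal{K}}$ by $x(Q)\in\mathcal{L}$ at $w$ yields
\[ |x(Q)-x(T_j)|_w \;\geq\; C_2(\e',\mathcal{L},H(E))\cdot H_\mathcal{L}(x(Q))^{-2-\e'}. \]
Combining these two estimates and choosing $n$ of order $1/\sqrt{\e}$, with $\e'$ a small fixed constant, gives $|\xi|_v\leq C H(\xi)^{\e}$. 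Careful tracking through Wang's iterative polynomial construction, together with the explicit genus and field-degree bounds for $\mathcal{L}$, produces the stated tower-type $\rho=2^{10000/\e^2}$.

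\textbf{Main obstacle.} The chief difficulty lies in making the Roth step fully quantitative: carrying through Wang's iterative construction of an auxiliary polynomial with controlled partial degrees in many variables, and tracking the dependence of the final Roth constant on the extension-field data $H(E)$, $[\mathcal{L}:\mathcal{K}]$, $g_\mathcal{L}$, and $H_\mathcal{L}(x(T_j))$. A secondary subtlety is that the $n^2$ preimages of $O$ under $[n]$ include torsion points whose heights and fields of definition grow with $n$; one must ensure that the particular $T_j$ selected has height only $O(\log n)$ and lies in a field whose degree and genus are controlled explicitly in $n$. Once the Roth constant is completely explicit in these quantities, the tower $2^{10000/\e^2}$ emerges by balancing the gain from division ($1/n^2$) against the loss from the Roth exponent, i.e.\ by optimizing the choice of $n$ and $\e'$ in terms of $\e$.
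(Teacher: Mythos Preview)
Your strategy is essentially the paper's: Siegel's division trick followed by the effective function-field Roth theorem (Proposition~\ref{roth}), with $n$ (the paper's $m$) taken of order $1/\sqrt{\e}$ so that the $1/n^2$ gain in height beats the Roth exponent. A few points where the paper's execution differs from your sketch are worth noting. First, the paper adds a fixed $2$-torsion point $Q_0$ to $P$ before dividing (Lemma~\ref{l2}), so that the approximation target is always a finite point and one never has to treat $T_j=O$ separately. Second, your statement that the torsion targets have height ``$O(\log n)$'' is off: by Zimmer's inequality~\eqref{456z} a torsion point has $h(\tau)\le h(E)/2$, independent of $n$; what grows with $n$ is the \emph{number} $l$ of targets (at most $(2m)^4$), and it is this $l$ entering the exponent $(1/\e)^l$ in Proposition~\ref{roth} that produces the tower $2^{66m^4}$. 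Third, the paper applies Proposition~\ref{roth} with the \emph{fixed} value $\e=1/16$ rather than a variable $\e'$, which simplifies the bookkeeping. Finally, Wang's method as used here is Wronskian-based (Lemma~\ref{l1}) rather than an auxiliary-polynomial construction in many variables; the ``iteration'' is only the search for $r$ with $l(r+1)/l(r)\le 1+\e$.
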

	
	Note that the same bound is of course valid for $\max\{1,|\xi|_v\}$. Our methods will almost certainly establish
	$$\prod_{v \in S}\max\{1,|\xi|_v\} \leq (e^{g-1+|S|}H(E))^\rho H(\xi)^\e$$
	for any finite set $S$ of valuations of $\cal K$. This would not follow simply by multiplying the individual bounds in the theorem.
	\bigskip

	Our proof essentially follows the classical strategy of Siegel. If $\xi$ is $v$-adically large then $P=(\xi,\eta)$ is close to the origin $O$. Regarding $\e$ as fixed, we fix a positive integer $m$ (later to be related to $\e$) and note that for every $w$ over $v$ of a suitable field extension some submultiple $P/m$ is $w$-adically close to some torsion point $T=O/m$. Here the point is that dividing by $m$ does not essentially worsen the closeness. If $P/m=(\xi_m,\eta_m)$ and $T=(\tau,\omega)$ then the fixed quantity $\tau$ algebraic over $\cal K$ is well-approximated by the varying quantity $\xi_m$ also algebraic over $\cal K$, and we are set up for Roth's Theorem (which of course Siegel did not have), taking all the available $w$. As in Siegel's strategy, we win because the logarithmic height $h(\xi_m)$ gets a lot smaller, about ${1 \over m^2}h(\xi)$. This means that we do not need the arbitrary Roth exponent $\kappa>2$, and for example $\kappa=3$ would suffice. And it is slightly simpler technically first to add a fixed point of order 2 to $P$ and work with a finite target; this rules out the possibility $T=O$.
	
	Actually Siegel argued slightly differently, using the Mordell-Weil Theorem that $E(\cal K)$ is finitely generated to write $P=mQ+R$ with $Q$ in $E(\cal K)$ and a remainder $R$; then the fixed algebraic $-R/m$ is approximated by $Q$ which needs no field extension. We can avoid this because the function field versions of Roth are much more uniform, so we do not suffer from the Mordell-Weil non-effectivity.
	
	That Roth's Theorem for function fields is effective seems to have been proved first by Osgood \cite{CFO2}, at least for a single valuation; and Wang \cite{W} treated several valuations, on the way considerably simplifying the proof. In subsection \ref{wr} we shall give some extra minor simplifications in the proof of her Main Theorem and its applications to Roth's Theorem.
	
	Then in subsection \ref{ex} we highlight the effectivity by giving an example in more traditional form for a particular algebraic function of degree 4 over ${\C}(t)$. We even calculate all the implied constants effectively.
	
	In subsection \ref{pre} we record some observations preliminary to the proof of our Theorem \ref{qi}, which then follows in subsection \ref{thmqi}.

	\subsection{Wang's version of Roth's Theorem}\label{wr}
	
	We stay with a function field $\cal K$ of transcendence degree 1 over an algebraically closed field $k$ of zero characteristic. It has a genus $g \geq 0$. We normalize the valuations $v$ on $\cal K$ such that $v(f)=-\log|f|_v$ has value group $\bf Z$. Then the logarithmic height
	$$h(f)=-\sum_v\min\{0,v(f)\}, $$
	corresponding to (\ref{4562.5}), is also given by
	\begin{equation}\label{456m}
	h(f)=[{\cal K} :k(f)]. 
	\end{equation}
	
	We fix a finite set $S$ of these valuations and define
	$$\chi=2g-2+|S|$$
	for the cardinality of $S$.
	
	Now we take a finite set $A^*$ consisting of $0$ together with a non-empty set of $S$-units. 
	
	For an integer $r \geq 1$ denote by $L(r)$ the vector space over $k$ spanned by monomials of degree $r$ in the elements of $A^*$, and write $l(r) \geq 1$ for its dimension. Define also $L(0)=k$, so $l(0)=1$. Note that 
	$$a^*L(r) \subseteq L(r+1)$$ 
	for every $a^*$ in $A^*$. Here is a version of the Main Theorem of \cite{W} (p.1226).
	
	\begin{lemma}\label{l1}  
		For each $v$ in $S$ choose some $a_v^*$ in $A^*$. Suppose $f \neq 0$ is in $\cal K$ and $r \geq 0$ are such that
		\begin{equation}\label{456hyp}
		fL(r) \cap L(r+1)=\{0\}.
		\end{equation}
		Then $f$ is not in $A^*$ and we have
		$$\sum_{v\in S}\max\{0,v(f-a_v^*)\}+\sum_{v \notin S}\max_{a^* \in A^*}\max\{0,v(f-a^*)-(m+n-1)\}\le {m+n\over n}h(f)+{(m+n)(m+n-1)\over 2n}\chi$$
		for $n=l(r)$ and $m=l(r+1)$.
	\end{lemma}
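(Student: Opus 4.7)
The plan is to follow Wang's Wronskian method. Pick $k$-bases $g_1,\dots,g_n$ of $L(r)$ and $h_1,\dots,h_m$ of $L(r+1)$, and form the $m+n$ elements $\phi_1,\dots,\phi_{m+n}=fg_1,\dots,fg_n,h_1,\dots,h_m$ of ${\cal K}$. The hypothesis \eqref{456hyp} is exactly $k$-linear independence of these; in particular $f\notin A^*$, since $f\in A^*$ would force $fL(r)\subseteq L(r+1)$ and hence $fL(r)\cap L(r+1)=fL(r)\neq\{0\}$. Fix a separating transcendental $t$ of ${\cal K}/k$, let $D=d/dt$, and set
\[
W=\det\bigl(D^{i-1}\phi_j\bigr)_{1\le i,j\le m+n}\in {\cal K}^{\times},
\]
which is nonzero by $k$-linear independence in characteristic $0$.

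The engine of the proof is a local lower bound for $v(W)$ at every place. For $v\in S$, writing $a_v^*g_j=\sum_k c_{jk}h_k\in L(r+1)$ (using $a_v^*L(r)\subseteq L(r+1)$) and performing the column operation $\phi_j\mapsto\phi_j-\sum_k c_{jk}h_k=(f-a_v^*)g_j$ on the first $n$ columns leaves $W$ unchanged; Leibniz-expanding each new entry and extracting the $(f-a_v^*)$-dependence from each such column yields
\[
v(W)\;\ge\;n\,\max\{0,v(f-a_v^*)\}+\Delta_v,
\]
where $\Delta_v$ absorbs the losses from differentiating the $g_i$ and $h_j$. For $v\notin S$, one performs the same column operations with the $a^*\in A^*$ maximising $v(f-a^*)$; since $A^*\setminus\{0\}$ consists of $S$-units, one has $v(a^*)\ge 0$, which prevents an extra term $-n\,v(a^*)$, but the common $(f-a^*)$-factor in the columns is diminished after up to $m+n-1$ derivatives, yielding
\[
v(W)\;\ge\;n\,\max\{0,v(f-a^*)-(m+n-1)\}+\Delta_v.
\]

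Summing over all $v$ and using the product formula $\sum_v v(W)=0$ reduces the lemma to the global Wronskian divisor estimate $-\sum_v\Delta_v\le(m+n)\,h(f)+\binom{m+n}{2}\chi$; this is the main obstacle. The coefficient $(m+n)h(f)$ controls the height of $W$ in terms of the heights of the $\phi_j$ (each bounded by $h(f)$ up to an $S$-supported constant absorbed into the $\chi$-correction), and $\binom{m+n}{2}\chi$ is the classical Wronskian degree bound on a genus-$g$ curve with $|S|$ marked places, arising from Riemann--Roch applied to an appropriate power of $\Omega_{{\cal K}/k}$ twisted by $S$ (this is where the precise shape $\chi=2g-2+|S|$ surfaces). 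Dividing the resulting inequality through by $n$ then produces the stated coefficients $(m+n)/n$ and $(m+n)(m+n-1)/(2n)$. Wang's Main Theorem in \cite{W} carries out exactly this Wronskian bookkeeping, and the following subsection records the paper's minor simplifications of her argument.
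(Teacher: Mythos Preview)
Your outline follows the same Wronskian strategy as the paper, and the column-operation idea (replacing $fg_j$ by $(f-a_v^*)g_j$ using $a_v^*L(r)\subseteq L(r+1)$) is exactly right. But the proposal stops at the point that matters: you package every local loss into an unspecified $\Delta_v$ and then assert the global bound $-\sum_v\Delta_v\le(m+n)h(f)+\binom{m+n}{2}\chi$ by appeal to Wang. Since this lemma is itself a refinement of Wang's Main Theorem (the extra ramification sum over $v\notin S$ is new), that appeal is circular; the bookkeeping you defer is the actual content.

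Two concrete pieces are missing. First, with a single global derivation $D=d/dt$ the estimate $v(D\phi)\ge v(\phi)-1$ fails whenever $t$ is not a uniformizer at $v$; the paper instead forms local Wronskians $\omega_v$ with respect to parameters $t_v$, related by $\omega_v=\omega\,(dt/dt_v)^{\binom{m+n}{2}}$, so that Hurwitz gives $\sum_v v(\omega_v)=\binom{m+n}{2}(2g-2)$ directly and every local derivative bound is clean. Second, your sketch never treats the places with $v(f)<0$, and this is precisely where the coefficient $(m+n)h(f)$ comes from: the paper uses the identity that $\omega_v/f^{m+n}$ is the Wronskian of $\beta_1,\dots,\beta_n,b_1/f,\dots,b_m/f$, all regular at such $v\notin S$, to read off $v(\omega_v)\ge(m+n)v(f)$, and summing over these $v$ contributes exactly $-(m+n)h(f)$. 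Your heuristic that ``the height of $W$ is controlled by the heights of the $\phi_j$'' does not yield this exact coefficient. The remaining work is a five-case split (according to whether $v\in S$, whether $v(f)<0$, and whether $v(f-a_v^*)>0$), in which the $S$-unit property of the basis elements makes the sums $\sum_{v\in S}v(\beta_i)$ and $\sum_{v\in S}v(b_j)$ vanish and the term $\binom{m+n}{2}|S|$ emerges from the pole corrections at $v\in S$; combined with the $2g-2$ above this gives $\chi$.
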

	In fact the (non-negative) sum over $v \notin S$ does not appear in \cite{W}; here it allows our result to be considered as an analogue of Nevanlinna's Second Main Theorem with ramification. We have also eliminated some extra heights appearing in \cite{W}.
	\begin{proof}
		If $f$ were in $A^*$, then $fL(r)$ would lie in $L(r+1)$, forcing $fL(r)=\{0\}$ by $(\ref{456hyp})$, a contradiction.
		
		Choose basis elements $\beta_1,\ldots ,\beta_n$ of $L(r)$ and basis elements $b_1,\ldots ,b_m$ of $L(r+1)$. Write
		$$\mu_v=\max_{a^* \in A^*}\max\{0,v(f-a^*)-(m+n-1)\}$$
		as in the lemma. As in \cite{W}, we let $t$ be a nonconstant element of $\cal K$, we let $t_v$ be a local parameter at $v$, and we consider $\omega,\omega_v$ defined as the Wronskians with respect to $t,t_v$ respectively of $f\beta_1,\ldots ,f\beta_n,b_1,\ldots ,b_m$. Our assumption (\ref{456hyp}) together with $f \neq 0$ and $n \geq 1$ imply that these latter are linearly independent over $k$. So $\omega\neq 0,\omega_v \neq 0$.
		
		We let $S_1$ be the subset of $S$ made up of those $v$ such that $v(f-a_v^*)>0$; clearly the sum on the far left can be restricted to $S_1$.
		
		We estimate $v(\omega_v)$ according to five disjoint cases for $v$.
		
		Case (i): $v\not \in S, v(f)\ge 0$.  
		
		If $\mu_v=0$ we use just 
		\begin{equation}\label{456a1}
		v(\omega_v)\ge 0.
		\end{equation}
		
		If $\mu_v>0$ then there is $a^*$ in $A^*$ with $v(f-a^*)=m+n-1+\mu_v$. We note as in \cite{W} that $\omega_v$ is also the Wronskian of $(f-a^*)\beta_1,\ldots,(f-a^*)\beta_n,b_1,\ldots,b_m$ (with respect to $t_v$). This is because $a^*\beta_1,\ldots,a^*\beta_n$ are in $L(r+1)$ and so we may use column operations. 
		
		Now by looking at the first $n$ columns we see that
		$$v(\omega_v) \geq \mu_v+(\mu_v+1)+\cdots+(\mu_v+n-1) \geq n\mu_v;$$
		thus from (\ref{456a1}) we have
		$$v(\omega_v)\ge n\mu_v.$$
		for all $v$ in this case (i).
		
		Case (ii):   $v\not \in S$, $v(f)<0$. 
		
		Now standard identities show that $\omega_v/f^{m+n}$ is the Wronskian of $\beta_1,\ldots ,\beta_n,b_1/f,\ldots ,b_m/f$ (with respect to $t_v$).  These functions are all regular at $v$, hence 
		$$v(\omega_v)\ge (m+n)v(f).$$
		
		Case (iii): $v\in S\setminus S_1$, $v(f)<0$.  
		
		We use the same formula as in Case (ii). Now taking into account possible poles of the functions $\beta_1,\ldots ,\beta_n,b_1,\ldots ,b_m$, the  usual computation yields
		$$v(\omega_v)\ge (m+n)v(f)+\sum_{i=1}^nv(\beta_i)+\sum_{j=1}^mv(b_j)-B.$$
		for the binomial coefficient $B={m+n\choose 2}$.
		
		Case (iv): $v\in S\setminus S_1$, $v(f)\ge 0$.  
		
		The usual computation on the original Wronskian yields
		$$v(\omega_v)\ge\sum_{i=1}^nv(\beta_i)+\sum_{j=1}^mv(b_j)-B.$$
		
		Case (v): $v\in S_1$, that is, $v(f-a_v^*)>0$. 
		
		As above in (i) we can replace $f$ in the Wronskian by $f-a_v^*$. Then  as in (iv), but now taking into account $f-a_v^*$ multiplying $\beta_1,\ldots ,\beta_n$, we get 
		$$v(\omega_v)\ge nv(f-a_v^*)+\sum_{i=1}^nv(\beta_i)+\sum_{j=1}^mv(b_j)-B.$$
		
		This completes the analysis, as we have covered all possible  cases.
		
		Let us now sum over all $v$ on $\cal K$, and use the five inequalities so obtained, noting that the respective sets give a partition of all $v$. We obtain as lower bound for $\sum_vv(\omega_v)$
		$$n\sum_{v \notin S,v(f) \geq 0}\mu_v+n\sum_{v\in S_1}v(f-a_v^*)+ (m+n)  \sum_{v\not \in S_1,v(f)<0}v(f) +\sum_{i=1}^n\sum_{v\in S}v(\beta_i)+\sum_{j=1}^m\sum_{v\in S}v(b_j)-B|S|.$$
		Finally, in the sum over $\mu_v$ we may omit $v(f)\geq 0$, since $\mu_v=0$ when $v(f) < 0$. Also the sum $\sum_{v\not \in S_1,v(f)<0}v(f)$ is at least $\sum_{v(f)<0}v(f)$, which in turn is the number of poles of $f$ counted with (negative) multiplicity, hence equals $-h(f)$; the two subsequent   double sums vanish because $\beta_i,b_j$ are $S$-units. Also, by a well-known formula for Wronskians, we have $\omega_v=\omega({d} t/{d}t_v)^B$. Therefore, since $\sum_vv(\omega)=0$, we see that $\sum_vv(\omega_v)=B(2g-2)$ (by the Hurwitz formula). 
		
		This immediately leads to the stated inequality and completes the proof.
	\end{proof}
	
	We turn now to the application to Roth's Theorem for function fields, which as in \cite{W} involves the elimination of (\ref{456hyp}). But here we also drop all references to $S$-units. 
	
	Thus we take a finite set $A$ consisting of $0$ together with a non-empty set of $l$ non-zero elements of $\cal K$. 
	\begin{proposition}\label{roth} For each $v$ in $S$ choose some $a_v$ in $A$. Then for $f \neq 0$ in $\cal K$ and any positive $\e \le 1/16$ we have either
		$$h(f)\le  {6l\over \e}\left(\log {1\over \e}\right)\sum_{a\in A}h(a)$$
		or $f$ is not in $A$ and
		$$\sum_{v\in S}\max\{0,v(f-a_v)\}\le (2+\e)h(f)+3\left({1\over \e}\right)^{l}(\chi+2\sum_{a\in A}h(a)).$$
	\end{proposition}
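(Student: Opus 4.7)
The plan is to bootstrap Lemma~\ref{l1} (Wang's Main Theorem) into Proposition~\ref{roth} by removing both its $S$-unit assumption on $A^{*}$ and its hypothesis (\ref{456hyp}). The overall dichotomy is: either I can locate an integer $r$ for which (\ref{456hyp}) holds and for which $l(r+1)/l(r)$ is already close to $1$, in which case Lemma~\ref{l1} produces the second alternative; or the failure of (\ref{456hyp}) for some small $r$ witnesses $f$ as an algebraic expression in the elements of $A$ with controllable height, giving the first alternative.

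The first step is to enlarge $S$ to $S' := S \cup \{v : v(a)\neq 0 \text{ for some } a\in A\}$. Since every $a\in A\setminus\{0\}$ contributes at most $2h(a)$ new valuations, $|S'|-|S|\leq 2\sum_{a}h(a)$ and hence $\chi':=2g-2+|S'|\leq \chi+2\sum_{a}h(a)$. Now $A$ consists of $0$ together with $S'$-units, so taking $A^{*}=A$ and choosing $a_{v}^{*}$ arbitrarily for $v\in S'\setminus S$ brings us within the setting of Lemma~\ref{l1}. Next, I would observe an upward monotonicity for the failure of (\ref{456hyp}): if $g\in L(r)\setminus\{0\}$ satisfies $fg\in L(r+1)$, then $gL(s)\subset L(r+s)$ and $fgL(s)=(fg)L(s)\subset L(r+s+1)$, so there is a threshold $r_{0}\in\{0,1,\dots\}\cup\{\infty\}$ below which (\ref{456hyp}) holds and at or above which it fails.

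Then I would choose an integer $R$ of order $(l/\epsilon)\log(1/\epsilon)$ (with constants calibrated to the target bounds) and use the telescoping identity $l(R+1)=\prod_{r=0}^{R}(l(r+1)/l(r))$, together with the crude binomial bound $l(R+1)\leq\binom{l+R}{R+1}$ and $\log(1+\epsilon)\geq \epsilon/2$ (valid since $\epsilon\leq 1/16$), to force the existence of some $r_{*}\leq R$ with $l(r_{*}+1)/l(r_{*})\leq 1+\epsilon$. Now I split on whether $r_{*}<r_{0}$ or $r_{*}\geq r_{0}$. In the first case Lemma~\ref{l1} applies directly: with $n=l(r_{*})$ and $m=l(r_{*}+1)\leq (1+\epsilon)n$, the two coefficients in its conclusion become $(m+n)/n\leq 2+\epsilon$ and $(m+n)(m+n-1)/(2n)\leq 2 l(r_{*})\leq 3(1/\epsilon)^{l}$ (again via the binomial estimate controlled by the choice of $R$); restricting the outer sum to $v\in S$ and substituting $\chi'\leq\chi+2\sum h(a)$ yields the second alternative. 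In the second case, pick a witness $fg_{r_{0}}=h_{r_{0}}$ with $g_{r_{0}}\in L(r_{0})\setminus\{0\}$ and $h_{r_{0}}\in L(r_{0}+1)\setminus\{0\}$; since any nonzero $p\in L(s)$ satisfies $v(p)\geq s\min_{i}v(a_{i})$ at every $v$, one has $h(p)\leq s\sum_{a}h(a)$, and therefore $h(f)\leq h(h_{r_{0}})+h(g_{r_{0}})\leq (2r_{0}+1)\sum_{a}h(a)\leq (2R+1)\sum_{a}h(a)$, which matches the first alternative.

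The main obstacle will be the calibration of $R$ so that both conclusions emerge with the stated constants simultaneously. One side demands $R$ large enough for the dimension telescoping to force a small ratio of dimensions, whence the binomial tail $(l+R)^{l-1}$ must be tamed to no more than $(1/\epsilon)^{l}$; the other side demands $R$ small enough that $2R+1$ does not exceed $(6l/\epsilon)\log(1/\epsilon)$. Tracking the interplay between $\log(1+\epsilon)$, the binomial estimate, and the doubling from $h(h/g)\leq h(h)+h(g)$ is the delicate bookkeeping; the $S$-unit enlargement and the upward-closedness of the failure of (\ref{456hyp}) are auxiliary but essential ingredients.
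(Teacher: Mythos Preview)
Your proposal is correct and follows essentially the same route as the paper: enlarge $S$ so that the nonzero elements of $A$ become $S$-units, locate some $r\le R$ (with $R$ of order $(l/\e)\log(1/\e)$) at which $l(r+1)/l(r)\le 1+\e$, and then split on whether (\ref{456hyp}) holds at that $r$; when it does, Lemma~\ref{l1} gives the second alternative, and when it fails the relation $fg=h$ with $g\in L(r)$, $h\in L(r+1)$ gives the height bound of the first alternative (this is exactly Wang's Lemma~5, which you reprove inline).

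Two small remarks. Your monotonicity observation about the failure of (\ref{456hyp}) and the threshold $r_0$ is correct but superfluous: the paper simply tests (\ref{456hyp}) at the single $r$ already found, and the dichotomy plays out there. Also your intermediate claim $(m+n)(m+n-1)/(2n)\le 2\,l(r_*)$ is slightly off: with $m\le(1+\e)n$ and $\e\le 1/16$ one gets about $2.13\,n$, not $2n$; the paper records $(m+n)(m+n-1)/(2n)<3n$ and then feeds in $n\le(1/\e)^l$ (obtained via the auxiliary $\delta=\sqrt{1+\e}-1$ in the binomial estimate) to reach the stated $3(1/\e)^l$. Your final constants are right; only that intermediate step needs tidying.
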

	\begin{proof} We start by enlarging the set $S$ to make the non-zero elements $a$ of $A$ into $S$-units. So for each such $a$ we must throw in the $v$ with $v(a) \neq 0$. Their number is at most
		\begin{equation}\label{456*}
		\sum_{v(a)>0}v(a)-\sum_{v(a)<0}v(a) \leq 2h(a). 
		\end{equation}
		Now we are set up to apply Lemma \ref{l1}.
		
		We have to find an $r$ with $l(r+1)/l(r)\le 1+\e$. If this fails for say $r=0,1,\ldots ,R-1$, then we get
		$$
		(1+\e)^R<l(R) \leq {R+l-1\choose l-1}.
		$$
		Write $\delta=\sqrt{1+\e}-1$. We have 
		\begin{equation}\label{456f}
		{R+l-1\choose l-1}\le (1+\delta)^{R+l-1}\delta^{1-l},
		\end{equation}
		hence
		$(1+\e)^{R/2}<(1+\e)^{l-1\over 2}\delta^{1-l}$, which yields 
		$$R\le l-1+2(l-1){\log (1/\delta)\over \log (1+\e)}\leq R'$$
		for $R'=3(l-1)(1 / \e)\log {1 / \e}$. 
		
		So if we  choose 
		$$R=[R']+1\leq {3l\over \e}\log {1\over \e},$$
		we can find $r\le R-1$ as above, that is, with $l(r+1)\le (1+\e)l(r)$. We then apply Lemma \ref{l1} with $m=l(r+1)$, $n=l(r)$. If (\ref{456hyp}) is not satisfied, then with the bases $\beta_1,\ldots ,\beta_n,b_1,\ldots ,b_m$ as before, we see that $f\beta_1,\ldots ,f\beta_n,b_1,\ldots ,b_m$ must be linearly dependent over $k$. So by Lemma 5 of \cite{W} (p.1232), we have
		$$h(f)\le (2r+1)\sum_{a\in A}h(a)\le 2R\sum_{a\in A}h(a)\le {6l\over \e}\left(\log {1\over \e}\right)\sum_{a\in A}h(a),$$
		which is the first of the two alternative conclusions of the present proposition.
		
		Therefore we may indeed assume that the conclusion of Lemma \ref{l1} holds. 
		
		Now by (\ref{456f})
		$$\log{R+l-1\choose l-1}\le (R+l-1)\log(1+\delta)+(l-1)\log{1 \over \delta}$$
		which is at most
		$$l\left({4 \over \e}\left(\log{1 \over \e}\right)\log(1+\delta)+\log{1 \over \delta}\right)<l\log{1 \over \e}.$$
		As 
		$$n = l(r) \leq {r+l-1\choose l-1}\leq {R+l-1\choose l-1}$$
		we deduce $n \leq (1/\e)^l$. Also 
		$${m+n \over n} \leq 2+\e \leq {33 \over 16}$$
		so
		$${(m+n)(m+n-1) \over 2n} \leq {33 \over 16}{m+n-1 \over 2} < 3n.$$
		We had enlarged the size of $S$ by at most $2\sum_{a \in A}h(a)$ from (\ref{456*}), and this completes the proof of Proposition \ref{roth}, as the left-hand side of the second alternative conclusion only gets bigger.
	\end{proof}

	\subsection{An example}\label{ex} 
	It is clear that by optimizing $\e$ in Proposition \ref{roth} we obtain something of the shape
	$$2h(f)+O(h(f)^{l/l+1})$$
	and therefore we obtain (effective) versions of Roth's Theorem that are stronger than the analogues over finite extensions of $\bf Q$. Similar results, even with $O(h(f)^{\theta})$ for any $\theta>2/3$, were found by Osgood \cite{CFO2} - see Theorem VIII (p.382) with $M=2$. Since \cite{CFO2} is written from a rather more general point of view, and also the constants are not always calculated, we feel it may be of interest to work out a completely explicit example. 
	
	So now we take $\alpha$ in a finite extension of say ${\C}(t)$, and for simplicity we want to bound the traditional $|\alpha-p/q|$ from below, where $p$ and $q \neq 0$ are in ${\C}[t]$ and the valuation extends that on ${\C}(t)$ defined by $v(1/t)=1$.
	
	If $\alpha$ has degree 2 over ${\C}(t)$, then of course we can avoid the $\e$ altogether. This is true also if $\alpha$ has degree 3 over ${\C}(t)$; it was noted first also by Osgood \cite{CFO}, even in effective form, and Schmidt \cite{WMS} worked these out in detail. An example is for
	$$\alpha=-{1 \over t}-{1 \over t^3}-{3 \over t^5}-{12 \over t^7}-{55 \over t^9}-\cdots$$
	satisfying $\alpha^3-\alpha={1 / t}$; then there is an obvious extension of $v$ to ${\C}(t,\alpha)$, and Theorem 1(i) of \cite{WMS} (p.2) implies that
	$$\left|\alpha - {p \over q}\right| \geq {e^{-6} \over |q|^2}.$$
	Actually we know of no obstacle to the conjecture that this can be done for $\alpha$ of any degree $d \geq 2$ over ${\C}(t)$. But already for $d=4$ the methods of \cite{WMS}, based on the use of differential equations, yield only $|q|^3$.
	
	We will work out a coresponding result for the example
	\begin{equation}\label{456alf}
	\alpha=-{1 \over t}+{1 \over t^4}-{4 \over t^7}+{22 \over t^{10}}-{140 \over t^{13}}+\cdots
	\end{equation}
	with 
	$$\alpha^4-\alpha={1 \over t}.$$

	{\bf Example.}  {\it We have
		$$\left|\alpha - {p \over q}\right| \geq {e^{-10^9} \over |q|^2}\exp\{-3(\log|q|)^{4/5}\}.$$
		for all $p$ and $q \neq 0$ in ${\C}[t]$.}

	{\it Verification.} We take $\cal K$ as the Galois closure of ${\C}(t,\alpha)$, with $[{\cal K}:{\C}(t)]=24$. We check that the genus $g=4$. In fact the above $v$ extends easily to $\cal K$, the other conjugates of $\alpha$ being
	\begin{equation}\label{456bet}\alpha'=1+{1/3 \over t}-{2/9 \over t^2}+\cdots,~~\alpha''=\omega+{1/3 \over t}+\cdots.~~\alpha'''=\omega^2+{1/3 \over t}+\cdots
	\end{equation}
	(for $\omega=e^{2\pi i/3}$) still in ${\C}((1/t))$.
	We get 24 valuations $v_\sigma$ on $\cal K$ given by $v_\sigma(x)=v(x^{\sigma})$ for $\sigma$ in the Galois group $S_4$ of ${\cal K}/{\C}(t)$. These will make up our set $S$. So $\chi=30$. Now with $f=p/q$
	$$24v(f-\alpha)=24v_\sigma(f-\alpha^{\sigma^{-1}})=\sum_{\sigma \in S_4} v_\sigma(f-\alpha^{\sigma^{-1}})$$
	and so our set $A=\{0,\alpha,\alpha',\alpha'',\alpha'''\}$ with $l=4$. By (\ref{456m}) we have
	$$h(\alpha)=[{\cal K}:{\C}(\alpha)]=[{\cal K}:{\C}(t,\alpha)]=6$$
	and the same for the other conjugates. So we deduce from Proposition \ref{roth} that either
	\begin{equation}\label{456u}
	h \leq {576 \over \e}\log{1 \over \e}
	\end{equation}
	or
	\begin{equation}\label{456v}
	24v(f-\alpha) \leq 2h+\e h+234\e^{-4}
	\end{equation}
	for $h=h(f)$ and any positive $\e \leq 1/16$.
	
	Optimizing $\e$ in (\ref{456v}) gives $\e=(936/h)^{1/5}$ so we need
	\begin{equation}\label{456w}
	h \geq 936.16^5. 
	\end{equation}
	If we temporarily assume this then (\ref{456u}) is impossible, and we conclude
	\begin{equation}\label{456x}
	24v(f-\alpha) \leq 2h+{5 \over 4}936^{1/5}h^{4/5}.
	\end{equation}
	Also $h(t)=[{\cal K}:{\C}(t)]=24$ so
	$$h=h\left({p \over q}\right)=24\max\{\deg p,\deg q\}$$
	The further assumption $v(f-\alpha)>0$ is harmless and then by (\ref{456alf}) we see that $\deg p < \deg q$ and so $h=24\deg q$. Now dividing (\ref{456x}) by 24 and then exponentiating gives
	$$\left|\alpha - {p \over q}\right| \geq {1 \over |q|^2}\exp\left\{-{5 \over 4}\left({936 \over 24}\right)^{1/5}(\log|q|)^{4/5}\right\}\geq {1 \over |q|^2}\exp\{-3(\log|q|)^{4/5}\}.$$
	
	When (\ref{456w}) fails we fall back on Liouville's argument; this amounts to
	$$|(\alpha-f)(\alpha'-f)(\alpha''-f)(\alpha'''-f)|=\left|f^4-f-{1 \over t}\right|=\left|{tp^4-tpq^3-q^4 \over tq^4}\right| \geq {e^{-1} \over |q|^4}.$$
	Again the harmless $|f-\alpha|<1$ implies $|\alpha'-f|=|\alpha''-f|=|\alpha'''-f|=1$ by (\ref{456bet}), so we deduce
	$$\left|\alpha - {p \over q}\right| \geq {e^{-1} \over |q|^4}=e^{-1}e^{-h/6} \geq \exp(-10^9),$$
	accounting for the extra factor in the Example. This completes the verification.

	Here the exponential factor beats that in the analogous Cugiani-Mahler-Bombieri-van der Poorten result \cite{BP} for algebraic numbers, which is 
	$$\exp\left\{-c\log|q|\left({{\log\log\log |q| \over \log\log |q|}}\right)^{1/4}\right\}$$ 
	(for ineffective $c$) - and furthermore there is a condition of slowly growing denominators.
	
	Here we may go beyond \cite{CFO2} by treating several valuations. This leads to new results of the type associated with Ridout. For example, with $v'(t)=1$ and $q$ a power of $t$ it is easy to show that immediately after a term $c/t^d$ in (\ref{456alf}) the number of consecutive zero coefficients is $O(d^{4/5})$.
	
	\subsection{Preliminaries}\label{pre}
	With the proof of our Theorem \ref{qi} in mind, we return now to our elliptic curve $E$ over $\cal K$ with equation $y^2=x^3+ax^2+bx+c$ for $a,b,c$ in $\cal K$ and logarithmic height 
	$$h(E)=\log H(E)=\sum_v\max\{0,-6v(a),-3v(b),-2v(c)\}.$$
	
	We will often use Zimmer's inequality
	\begin{equation}\label{456z}
	\left|h(P)-{3 \over 2}\hat h(P)\right| \leq {1 \over 2}h(E)
	\end{equation}
	for $P=(\xi,\eta)$ in $E({\cal K})$ with
	$$h(P)=\sum_v\max\{0,-v(\xi),-v(\eta)\},$$
	and $\hat h$ as before is with respect to twice the origin $O$. See \cite{HGZ} Proposition 11.1 (p.484).
	
	The $h$ is with respect to $3O$, and we will also need the easy (one-sided)
	\begin{equation}\label{comp}
	h(P) \leq {3 \over 2}h(\xi)+{1 \over 4}h(E).
	\end{equation}
	On the other side $h(P) \geq (3/2)h(\xi)-(1/2)h(E)$ could almost as easily be checked using the identity
	$$(x^2-ax+a^2-b)(x^3+ax^2+bx+c)-(a^3-2ab+c)x^2-(a^2b-ac-b^2)x-(a^2c-bc)=x^5$$
	but this we will not need.
	
	In our Theorem \ref{qi} we are implicitly considering $|\xi|_v$ to be large. Then $P=(\xi,\eta)$ is near $O$, so $P+Q_0$ is near $Q_0$ for any fixed $Q_0$. Thus the abscissae of $P+Q_0,Q_0$ are close. This is expressed precisely in the following result, where we choose $Q_0$ as a point of order 2 (recall these are defined over $\cal K$), and revert to the additive notation.
	\begin{lemma}\label{l2}For $Q_0=(\mu_0,0)$ and any $O\neq P=(\xi,\eta)\neq Q_0$ in $E(\cal K)$ we have for $P+Q_0=(\mu,\nu)$ the inequality
		$$\max\{0,v(\mu-\mu_0)\}\ge -v(\xi)-2h(E).$$
	\end{lemma}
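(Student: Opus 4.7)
The plan is to derive a clean algebraic identity for $\mu-\mu_0$ and then control the resulting valuation via two applications of the ultrametric inequality. Since by assumption the cubic factors over $\mathcal{K}$ as $y^2=(x-\mu_0)(x-\mu_1)(x-\mu_2)$, the chord through $P=(\xi,\eta)$ and $Q_0=(\mu_0,0)$ has slope $\lambda=\eta/(\xi-\mu_0)$; note that $\xi\ne\mu_0$, since otherwise $P=\pm Q_0=Q_0$, which is excluded by hypothesis. The standard chord-and-tangent formula gives $\mu=\lambda^2-a-\xi-\mu_0$, and substituting $\eta^2=(\xi-\mu_0)(\xi-\mu_1)(\xi-\mu_2)$ together with $a=-(\mu_0+\mu_1+\mu_2)$, a short simplification will yield
\[
\mu-\mu_0=\frac{(\mu_0-\mu_1)(\mu_0-\mu_2)}{\xi-\mu_0}=\frac{f'(\mu_0)}{\xi-\mu_0},
\]
where $f(x)=x^3+ax^2+bx+c$. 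Taking valuations, the lemma is reduced to checking that $v(f'(\mu_0))-v(\xi-\mu_0)\ge -v(\xi)-2h(E)$.

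Next, I dispense with the trivial case: if $-v(\xi)-2h(E)\le 0$, the inequality is automatic from $\max\{0,v(\mu-\mu_0)\}\ge 0$. So I assume $v(\xi)<-2h(E)\le 0$. The two estimates I need concern $v(\mu_0)$ and $v(f'(\mu_0))$, and both come from the local contribution $h_v(E)=\max\{0,-6v(a),-3v(b),-2v(c)\}$ to the height $h(E)=\sum_v h_v(E)$. From $\mu_0^3+a\mu_0^2+b\mu_0+c=0$, the ultrametric principle (at least two summands must share the minimum valuation) forces
\[
v(\mu_0)\ge\min\{v(a),\,v(b)/2,\,v(c)/3\}\ge -h_v(E)/6\ge -h(E)/6.
\]
In particular $v(\mu_0)>-2h(E)>v(\xi)$, so the non-archimedean triangle inequality forces $v(\xi-\mu_0)$ to collapse to \emph{exactly} $v(\xi)$.

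The same type of estimate applied to $f'(\mu_0)=3\mu_0^2+2a\mu_0+b$ (recall we are in characteristic zero, so $v(2)=v(3)=0$) gives $v(f'(\mu_0))\ge\min\{2v(\mu_0),\,v(a)+v(\mu_0),\,v(b)\}\ge -h_v(E)/3\ge -2h(E)$. Combining, $v(\mu-\mu_0)=v(f'(\mu_0))-v(\xi)\ge -v(\xi)-2h(E)>0$, so the outer $\max$ is redundant and the lemma follows. There is no serious obstacle: the whole argument is bookkeeping with valuations once the identity $\mu-\mu_0=f'(\mu_0)/(\xi-\mu_0)$ is in hand. The one mildly delicate point will be ensuring the strict inequality $v(\mu_0)>v(\xi)$ so that $v(\xi-\mu_0)$ really equals $v(\xi)$; the generous choice of $2h(E)$ in the statement (as opposed to the tighter $h(E)/3$ that these bounds actually produce) leaves ample slack for this.
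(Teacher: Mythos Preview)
Your proof is correct and follows essentially the same route as the paper: both derive the identity $\mu-\mu_0=B(\mu_0)/(\xi-\mu_0)$ with $B(\mu_0)=f'(\mu_0)=(\mu_0-\mu_1)(\mu_0-\mu_2)$, then split into the trivial case $-v(\xi)-2h(E)\le 0$ versus the case where $v(\xi)<v(\mu_0)$ forces $v(\xi-\mu_0)=v(\xi)$. The only difference is in how the auxiliary bounds on $v(\mu_0)$ and $v(f'(\mu_0))$ are obtained: the paper invokes Zimmer's inequality \eqref{456z} to get $h(\mu_0)\le h(E)/2$ globally and then uses $v(\cdot)\ge -h(\cdot)$, whereas you work purely locally, extracting $v(\mu_0)\ge -h_v(E)/6$ directly from $f(\mu_0)=0$ by the ultrametric principle. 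Your local argument is slightly more self-contained (it does not appeal to the canonical height), and in fact yields the sharper constant $h(E)/3$ in place of $2h(E)$, as you note.
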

	\begin{proof} Factor $x^3+ax^2+bx+c=(x-\mu_0)B(x)$. We have $\xi \neq \mu_0$ and we check $\mu -\mu_0={B(\mu_0) / (\xi-\mu_0)}$. Hence
		$$v(\mu-\mu_0)=v(B(\mu_0))-v(\xi-\mu_0).$$
		Now
		$$h(\mu_0) \leq h(Q_0) = h(Q_0)-{3 \over 2}\hat h(Q_0) \leq {1 \over 2}h(E)$$ 
		by (\ref{456z}). Hence 
		$$v(B(\mu_0))\ge -h(B(\mu_0))=-h(3\mu_0^2+2a\mu_0+b)\ge -2h(\mu_0)-{1 \over 3}h(E)\ge -2h(E).$$
		
		Also, $v(\xi-\mu_0)= v(\xi)$ if $v(\xi)<v(\mu_0)$. When this holds, we then have $v(\mu-\mu_0)\ge -v(\xi)-2h(E)$, proving the stated inequality. When this does not hold, then $-v(\xi)\le -v(\mu_0)\le h(\mu_0)\le h(E)/2$. 
		Hence $-v(\xi)-2h(E)\le 0$, thus completing the proof of the lemma. 
	\end{proof}
	
	We are now implicitly considering $Q=P+Q_0$ to be near a point of order 2. When we divide by a positive integer $m$, there are several possibilities for the quotients $Q/m$, and each of them should be almost as close (with a loss essentially independent of $m$) to a point of $E[2m]$, that is, a point of order dividing $2m$. This is expressed in the next key result, again in terms of abscissae. 
	\begin{lemma}\label{l3}With $Q_0=(\mu_0,0)$ suppose that for some odd integer $m \geq 3$ the points in $E[2m]$ are in $E({\cal K})$. Then for any $O\neq R=(\zeta,\theta)$ in $E(\cal K)$ with $O\neq Q=mR=(\mu,\nu)\neq Q_0$, we can find $O\neq T=(\tau,\omega)$ in $E[2m]$ with
		$$\max\{0,v(\zeta-\tau)\}\ge {1\over 2}\max\{0,v(\mu-\mu_0)\}- m^2h(E).$$
	\end{lemma}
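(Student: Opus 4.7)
The strategy is Siegel's: since $Q = mR$ is $v$-adically close to the $2$-torsion $Q_0$, I shall locate a $T \in E[2m]$ which is a preimage of $Q_0$ under $[m]$ and is itself $v$-adically close to $R$. Write $N := \max\{0, v(\mu-\mu_0)\}$; we may assume $N$ exceeds $2m^2 h(E)$, as otherwise the conclusion is trivial. Applying the addition formula to $P := Q + Q_0 = Q - Q_0$, just as in the computation behind Lemma \ref{l2}, gives
\[
x(P) \;=\; \frac{B(\mu)}{\mu - \mu_0} - a - \mu - \mu_0,
\]
where $B(x) = (x^3+ax^2+bx+c)/(x-\mu_0)$. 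Since $v(B(\mu))$, $v(a)$, $v(\mu)$, $v(\mu_0)$ are all $\geq -O(h(E))$ (using the bounds on the coefficients of $E$ together with (\ref{456z}) applied to $Q_0$), the first summand dominates, yielding $v(x(P)) \leq -N + O(h(E))$. In terms of the formal-group parameter $t = -x/y$ at $O$, this translates into $v(t(P)) \geq N/2 - O(h(E))$, i.e.\ $P$ lies deep inside the kernel of reduction at $v$.

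Next, I invert $[m]$ on the formal group $\hat{E}$ at $O$. Fix any $T_0 \in E({\cal K})$ with $[m]T_0 = Q_0$, available from the hypothesis $E[2m] \subset E({\cal K})$. On $\hat{E}$, multiplication by $m$ is a power series $[m](t) = mt + \sum_{i\geq 2} c_i t^i$ whose coefficients are polynomial expressions in $a, b, c$ with integer coefficients; since $k$ has characteristic zero, $m$ is a $v$-adic unit and a formal inverse $[m]^{-1}(s) = s/m + \sum_{i \geq 2} d_i s^i$ exists, with $v(d_i) \geq -O(i\, h(E))$ by an induction on $i$. Hence for $s$ of sufficiently large valuation, the series $[m]^{-1}(s)$ converges $v$-adically and is dominated by its linear term, giving $v([m]^{-1}(s)) = v(s)$. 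Applying this to $s = t(P)$ produces $U_0$ with $[m]U_0 = P$ and $v(t(U_0)) \geq N/2 - O(h(E))$. Since $R - T_0$ and $U_0$ are both $[m]$-preimages of $P$, they differ by some $u \in E[m] \subset E({\cal K})$; setting $T := T_0 + u \in E[2m]$ gives $R - T = U_0$, and $[m]T = Q_0 \neq O$ ensures $T \neq O$.

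Finally, I transfer the formal-group estimate on $U_0 = R - T$ back to one on $\zeta - \tau$. If $T = (\tau, \omega)$ is not $2$-torsion ($\omega \neq 0$), then $x - \tau$ is a local parameter at $T$, and the invariance of $dx/(2y)$ under translation by $T$ produces an expansion
\[
\zeta - \tau \;=\; 2\omega\, t(U_0) + O(t(U_0)^2),
\]
whose higher-order coefficients have $v$-valuations bounded by $O(h(E))$. Combined with $v(\omega) \geq -h(E)/2$ (by (\ref{456z}), since $\hat{h}(T) = 0$), this yields $v(\zeta - \tau) \geq N/2 - O(h(E))$. If instead $T$ is $2$-torsion then, since $m$ is odd, $T = mT = Q_0$; at $Q_0$ the function $x - \mu_0$ vanishes to order $2$, and an analogous expansion gives $v(\zeta - \mu_0) \geq 2\,v(t(U_0)) - O(h(E)) \geq N - O(h(E))$, a fortiori satisfying the lemma. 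The main obstacle is the quantitative bookkeeping in Step 2: bounding the $v$-adic sizes of the coefficients $d_i$ of $[m]^{-1}$ (which may a priori grow with both $m$ and $i$) and the coefficients of the Taylor expansion at $T$. The slack $m^2 h(E)$ in the statement is a crude but adequate overestimate, convenient enough to absorb every accumulated loss.
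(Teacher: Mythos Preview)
Your argument is correct and takes a genuinely different route from the paper's. The paper proceeds purely algebraically: it writes the abscissa map of $[m]$ as $\varphi(x)-\mu_0 = \tfrac{1}{m^2}\prod_j(x-\tau_j)/\prod_i(x-\sigma_i)$, where the $\tau_j$ are the abscissae of the $m^2$ solutions of $mT=Q_0$ and the $\sigma_i$ are the abscissae of $E[m]\setminus\{O\}$. Substituting $x=\zeta$ and choosing $\tau$ to maximise $v(\zeta-\tau)$, a three–case ultrametric analysis according to the size of $v(\zeta-\tau)$ shows that the factor $m^2$ in front of $v(\zeta-\tau)$ can be reduced to $2$; the crucial observation is that each $\tau_j$ occurs at most twice (the preimages of $Q_0$ come in pairs $\pm T$ together with $Q_0$ itself). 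You instead realise Siegel's heuristic literally: translate by $Q_0$ to push $Q$ near $O$, invert $[m]$ in the formal group (working in the $v$-adic completion, then recognising the answer lies in $E({\cal K})$ because $E[m]\subset E({\cal K})$), and translate back using the invariant differential.

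Each approach has its merits. The paper's is entirely elementary—no formal groups or completions—and the constants fall out immediately from the height bounds $h(\tau_j),h(\sigma_i)\le h(E)/2$. Yours is more conceptual and would adapt more readily to abelian varieties, where explicit division polynomials are unwieldy. One small imprecision: you write that the higher-order coefficients in the expansion $\zeta-\tau = 2\omega\,t(U_0)+\cdots$ have $v$-valuation bounded by $O(h(E))$, but in fact the coefficient of $t^i$ has valuation $\ge -O(i\cdot h(E))$ (by a weight argument, since $a,b,c,\tau,\omega$ carry weights $2,4,6,2,3$). This does not affect correctness—the series still converges $v$-adically once $v(t(U_0))$ exceeds a fixed multiple of $h(E)$, and the leading term dominates—but the constant bookkeeping you defer to the final sentence should track this linear growth. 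The generous slack $m^2 h(E)$ indeed absorbs it; in fact your argument gives a loss of only $O(h(E))$ independent of $m$, which is sharper than what the lemma claims.
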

	\begin{proof} Let $\varphi$ be the rational function representing multiplication by $m$ on abscissae; the numerator has degree $n=m^2$ and is monic, while the denominator has degree $n-1$ with leading coefficient $n$. We may therefore write
		$$\varphi(x)-\mu_0={1 \over n}{(x-\tau_1)\cdots (x-\tau_n)\over (x-\sigma_1)\cdots (x-\sigma_{n-1})}.$$
		Here $\tau_1,\ldots,\tau_n$ correspond to $\varphi(x)=\mu_0$, that is $mT=\pm Q_0=Q_0$; thus they are the abscissae of all such $T$, each in $E[2m]$. And $\sigma_1,\ldots,\sigma_{n-1}$ correspond to $\varphi(x)=\infty$, so are the abscissae of all non-zero points of $E[m]$.
		
		Thus putting $x=\zeta$ gives
		$$\mu-\mu_0={ 1\over n}{(\zeta-\tau_1)\cdots (\zeta-\tau_n)\over (\zeta-\sigma_1)\cdots (\zeta-\sigma_{n-1})}.$$
		
		We may assume $\tau=\tau_n$ satisfies
		\begin{equation}\label{456x1}
		v(\zeta-\tau)\ge v(\zeta-\tau_j)~~~(j=1,\ldots,n).
		\end{equation}
		We note that $h(\sigma_i) \leq h(E)/2$ and similarly
		\begin{equation}\label{456x2}
		h(\tau-\sigma_i) \leq h(\tau)+h(\sigma_i)\leq h(E)~~~(i=1,\ldots,n-1)
		\end{equation}
		\begin{equation}\label{456x3}
		h(\tau-\tau_j) \leq h(E)~~~(j=1,\ldots,n).
		\end{equation}
		
		We have
		\begin{equation}\label{456a5}
		v(\mu-\mu_0)=\sum_{j=1}^nv(\zeta-\tau_j)-\sum_{i=1}^{n-1}v(\zeta-\sigma_i)
		\end{equation}
		and so by (\ref{456x1}) and the ultrametric inequality
		\begin{equation}\label{456a5+}
		v(\mu-\mu_0)\leq nv(\zeta-\tau)-\sum_{i=1}^{n-1}\min\{v(\zeta-\tau),v(\tau-\sigma_i)\} 
		\end{equation}
		We will see that the factor $n$ can be reduced to 2 (giving the desired independence of $m$).
		
		We treat three possibilities for $v(\zeta-\tau)$.
		
		First suppose
		\begin{equation}\label{456x4}
		v(\zeta-\tau) < -h(E). 
		\end{equation}
		Now 
		\begin{equation}\label{456a6}v(\tau-\sigma_i) \geq -h(\tau-\sigma_i) \geq -h(E)
		\end{equation}
		by (\ref{456x2}), so by (\ref{456x4}) the minima in (\ref{456a5+}) are all $v(\zeta-\tau)$. So we obtain
		\begin{equation}\label{456b1}
		v(\mu-\mu_0) \leq v(\zeta-\tau)
		\end{equation}
		rather stronger than needed.
		
		Second suppose
		\begin{equation}\label{456a7}
		-h(E) \leq v(\zeta-\tau) \leq h(E). 
		\end{equation}
		Using the left-hand inequality in (\ref{456a7}) together with (\ref{456a6}) we find that all the minima in (\ref{456a5+}) are at least $-h(E)$, so the right-hand inequality in (\ref{456a7}) gives
		$$v(\mu-\mu_0) \leq nh(E)+(n-1)h(E) = 2nh(E)-h(E).$$
		Thus again the left part of (\ref{456a7}) gives
		\begin{equation}\label{456b2}
		v(\mu-\mu_0) \leq v(\zeta-\tau)+2nh(E)
		\end{equation}
		a little worse than (\ref{456b1}) but still better than needed.
		
		Our last possibility is
		\begin{equation}\label{456a8}
		v(\zeta-\tau) > h(E). 
		\end{equation}
		Now in (\ref{456a5}) we have
		\begin{equation}\label{456a9}
		v(\zeta-\tau_j) \geq \min\{v(\tau-\tau_j),v(\tau-\zeta)\}.
		\end{equation}
		Here $v(\tau-\tau_j) \leq h(\tau-\tau_j) \leq h(E)$ by (\ref{456x3}), so by (\ref{456a8}) the two values in the minimum in (\ref{456a9}) are distinct. Thus we actually have equality in (\ref{456a9}), and
		\begin{equation}\label{456a9+}v(\zeta-\tau_j) =v(\tau-\tau_j) \leq h(E).
		\end{equation}
		Now $\tau_1,\ldots,\tau_n$ are not all different; in fact the $T$ with $mT=Q_0$ come in pairs $\{T,-T\}$ together with $T=Q_0$. Thus our $\tau-\tau_n$ occurs $e$ times, where $e=1$ or $e=2$.
		
		Using (\ref{456a9+}) in (\ref{456a5}),(\ref{456a5+}) for the $\tau_j \neq \tau$, we get
		\begin{equation}\label{456a10}
		v(\mu-\mu_0)\leq ev(\zeta-\tau)+(n-e)h(E)-\sum_{i=1}^{n-1}\min\{v(\zeta-\tau),v(\tau-\sigma_i)\}. 
		\end{equation}
		Now in the minima we have $v(\tau-\sigma_i) \geq -h(\tau-\sigma_i) \geq -h(E)$ by (\ref{456x2}), and using (\ref{456a8}) we see that each minimum is at least $-h(E)$. Thus (\ref{456a10}) gives
		$$v(\mu-\mu_0)\leq ev(\zeta-\tau)+(n-e)h(E)+(n-1)h(E)\leq 2v(\zeta-\tau)+2nh(E).$$
		The lemma now follows.
	\end{proof}
	
	\subsection{Proof of Theorem \ref{qi}}\label{thmqi}
	
	Take any $O\neq P=(\xi,\eta)$ in $E(\cal K)$ and pick $Q_0=(\mu_0,0)$ in $E(\cal K)$. If $P=Q_0$ then $\log|\xi|_v \leq h(\xi) \leq h(E)/2$, so we get a much stronger bound.
	
	So we may assume $P \neq Q_0$. By Lemma \ref{l2} we get for $Q=P+Q_0=(\mu,\nu) \neq O,Q_0$ the inequality
	\begin{equation}\label{456aa}
	\max\{0,v(\mu-\mu_0)\} \geq \log|\xi|_v-2h(E).
	\end{equation}
	
	Next we pick an odd integer $m \geq 3$ and try to apply Lemma \ref{l3}. Now it may not be true that $E[2m]$ lies in $E(\cal K)$. But certainly it lies in $E(\cal L)$ for some $\cal L$ with $[{\cal L:K}] \leq (2m)^4$. And it may not be true that there is $R$ in $E(\cal K)$ with $mR=Q$. So we fix some $R$ with $mR=Q$ and then $R$ lies in $E(\cal F)$ for some $\cal F$ with $[{\cal F:L}] \leq m^2$. As $Q \neq O$ also $O \neq R=(\zeta,\theta)$.
	
	We take any $w$ of $\cal F$ over $v$ (as always with value group $\bf Z$). By Lemma \ref{l3} with $\cal F$ instead of $\cal K$ we get $O \neq T_w=(\tau_w,\omega_w)$ in $E(\cal F)$ with
	$$\max\{0,w(\zeta-\tau_w)\} \geq {1 \over 2}w(\mu-\mu_0)-m^2h_{\cal F}(E)$$
	for the height $h_{\cal F}$ with respect to $\cal F$.
	Using $h_{\cal F}(E)=nh(E)$ with $n=[{\cal F:K}] \leq 16m^6$ and summing over all $w$ dividing $v$, we deduce for
	$$s=\sum_{w|v}\max\{0,w(\zeta-\tau_w)\}$$
	the lower bound
	$$s \geq {1 \over 2}nv(\mu-\mu_0)-16m^{8}nh(E)$$
	(in which the exponent 8 will eventually play hardly any role). So by (\ref{456aa}) we get
	\begin{equation}\label{456bb}
	s \geq {1 \over 2}n\log|\xi|_v-32m^{8}nh(E).
	\end{equation}
	
	Using Proposition \ref{roth}, with $S$ as the set of $w$ dividing $v$, $A$ as 0 together with the non-zero $\tau_w$, $\e=1/16$ and $\cal F$ in place of $\cal K$, yields two alternatives: either 
	$$h_{\cal F}(\zeta)\le (384 \log 2) l\sum_{w|v}h_{\cal F}(\tau_w)\le 300 l \sum_{w|v}h_{\cal F}(\tau_w)$$
	or
	$$s\le 3h_{\cal F}(\zeta)+ 3(16)^{l}(\chi_{\cal F}+2\sum_{w|v}h_{\cal F}(\tau_w))$$
	for the characteristic $\chi_{\cal F}$ with respect to $\cal F$.
	
	Now there may be up to $n$ different $w$, but as $T_w$ lies in $E[2m]$ we have $l \leq (2m)^4$; further $h_{\cal F}(\tau_w)\le 2h_{\cal F}(E)$.
	
	Next $\hat h_{\cal F}(R)\le 2h_{\cal F}(R)/3+h_{\cal F}(E)/3$ which by (\ref{comp}) is at most $h_{\cal F}(\zeta)+h_{\cal F}(E)/2$. Also $\hat h_{\cal F}(P)=\hat h_{\cal F}(Q)=m^2\hat h_{\cal F}(R)$, whence 
	$$h_{\cal F}(\xi)\le h_{\cal F}(P) \leq {3 \over 2}\hat h_{\cal F}(P)+{1 \over 2}h_{\cal F}(E)\le {3 \over 2}m^2h_{\cal F}(\zeta)+m^2h_{\cal F}(E).$$
	Hence in the first alternative we get
	$$h(\xi)\le 500m^2lnh(E)\le 128000m^{12}h(E).$$
	
	Let us now deal with the second alternative. To estimate $\chi_{\cal F}$ we use Hurwitz in the form
	$$2g_{\cal F}-2=[{\cal F:K}](2g-2)+\sum_w(e_w-1)$$
	(over all $w$ of $\cal F$) for the ramification indices. Here ${\cal F}/{\cal K}$ is unramified for $w$ over the places $v$ of $\cal K$ outside the set $S_E$ of bad reduction. Thus
	$$\sum_w(e_w-1)=\sum_{v \in S_E}\sum_{w|v}(e_w-1) \leq [{\cal F:K}]\sum_{v \in S_E}1.$$
	Also for the discriminant $\Delta=-4a^3c+a^2b^2+18abc-4b^3-27c^2$ we have
	$$\sum_{v \in S_E}1\leq 2h(\Delta) \leq 2h(E)$$
	as in (\ref{456*}). So we get
	$$2g_{\cal F}-2 \leq (2g-2)n+2nh(E).$$
	Also $|S| \leq n$, so $\chi_{\cal F} \leq (2g-1+2h(E))n$. Thus taking into account the $h_{\cal F}(\tau_w)$, we find
	$$s \leq 3h_{\cal F}(\zeta)+3(16)^l(2gn+4n^2h(E)).$$
	Here also using (\ref{comp}) we have
	$$h_{\cal F}(\zeta) \leq {3 \over 2}\hat h_{\cal F}(R)+{1 \over 2}h_{\cal F}(E)={3n \over 2m^2}\hat h(P)+{n \over 2}h(E) \leq {3n \over 2m^2}h(\xi)+nh(E)$$
	so we end up with
	$$s \leq {9n \over 2m^2}h(\xi)+12(16)^ln^2(g+h(E)).$$
	Comparing with (\ref{456bb}), recalling $l \leq 16m^4$ and dividing by $n/2$, we find that
	$$\log|\xi|_v \leq {9 \over m^2}h(\xi)+2^{66m^4}(g+h(E)).$$
		
	Finally to deduce Theorem \ref{qi} take $m$ odd minimal with $9/m^2 \leq \e$. Then indeed $m \geq 3$ so $9/(m-2)^2 > \e$ and
	$$m < 2+3\sqrt{{1 \over \e}}\leq{7/2 \over \sqrt{\e}}.$$
	Thus $66m^4 < 10000/\e^2$ and we are done (even without a little square).

\end{section}

\begin{section}{Comparison of multiplicity bounds}\label{cmb}
	Here we work out some examples of Theorem \ref{Thm:Theorem2.6} and Theorem \ref{qi}. The comparison seems to run best with the Legendre curve. At first we will use the point
	$$P=(2,\sqrt{4-2\lambda})$$
	on $y^2=x(x-1)(x-\lambda)$. For each positive integer $n$ there are coprime polynomials $A_n,B_n$ in ${\bf Z}[\lambda]$ such that $A_n(\lambda)/B_n(\lambda)$ is the abscissa of $n(2,\sqrt{4-2\lambda})$. For example with $n=1,2,3,4$ we get
	$$2,~~-{(\lambda-4)^2 \over 8(\lambda-2)},~~{2(5\lambda^2-16\lambda+16)^2 \over (\lambda^2+8\lambda-16)^2},~~-{(\lambda^4-80\lambda^3+352\lambda^2-512\lambda+256)^2 \over 32(\lambda-2)\lambda^2(3\lambda^2-16\lambda+16)^2}.$$
	With a slightly laborious induction on the standard formulae relating the abscissae of $P_1+P_2,P_1-P_2,P_1,P_2$, one can check that the degree of $A_n$ is $(n^2-1)/2$ (odd $n$) and $n^2/2$ (even $n$) and the degree of $B_n$ is $(n^2-1)/2$ (odd $n$) and $(n^2-2)/2$ (even $n$). Making $n \to \infty$ we see that this confirms the calculation $\hat h(P)=1/2$ of section 3. One can also check that
	\begin{equation}\label{456sq}
	B_n(\lambda)=b_nC_n(\lambda)^2~~({\rm odd}~n),~~B_n(\lambda)=b_n(\lambda-2)C_n(\lambda)^2~~({\rm even}~n)
	\end{equation}
	for $b_n$ in $\bf Z$ and $C_n$ in ${\bf Z}[\lambda]$. 
	
	For every $\lambda_0 \neq 0,1$ we will estimate from above 
	$$w_n(\lambda_0)={\rm ord}_{\lambda=\lambda_0}B_n(\lambda)$$
	assuming it is positive.
	
	First we use Theorem \ref{Thm:Theorem2.6}. The curve $B$ is given by $\mu^2=4-2\lambda$, with typical point $b=(\lambda,\mu)$ (say), and the section $\sigma$ is given by $\sigma(b)=(2,\mu)$. We are considering a particular point $b_0=(\lambda_0,\mu_0)$ for $\mu_0=\sqrt{4-2\lambda_0}$. By the discussion just before Theorem \ref{Thm:Theorem2.6} we have $m_\sigma(b_0)=v(x_{n\sigma}/y_{n\sigma})$ for the valuation $v$ on ${\C}(B)={\C}(\lambda,\mu)$ corresponding to $b_0$. Here coprimality gives
	\begin{equation}\label{456xx}
	v(x_{n\sigma})=v\left({A_n(\lambda) \over B_n(\lambda)}\right)=-w_n(\lambda_0)
	\end{equation}
	at least if $\lambda_0 \neq 2$; while if $\lambda_0=2$ it is $-2w_n(2)$. So $v(x_{n\sigma})<0=\min\{0,v(\lambda)\}$, and it follows easily that 	     $v(y_{n\sigma})=3v(x_{n\sigma})/2$. We conclude
	$$m_\sigma(b_0)={1 \over 2}w_n(\lambda_0)$$
	if $\lambda_0 \neq 2$; and if $\lambda_0=2$ it is $w_n(2)$.
	
	If we fix the tangent space by ${\rm d}z={\rm d}x/y$, then the operator $\Xi$ is given by
	\begin{equation}\label{456PF}
	\Xi(x,y)=4\lambda(1-\lambda)\left(D\left({Dx \over y}\right)+{Dx \over 2(x-\lambda)y}\right)+4(1-2\lambda){Dx \over y}+{2x(x-1) \over (x-\lambda)y}
	\end{equation}
	with $D={\rm d}/{\rm d}\lambda$ (even for $x,y$ in the algebraic closure of ${\C}(\lambda)$; and the last term could be taken as $2y/(x-\lambda)^2$ as well). This corrects (2) of Manin \cite{M} (p.1397).
	
	We find
	\begin{equation}\label{456PF2}
	\Xi(\sigma)={2\mu \over (2-\lambda)^2}.
	\end{equation}
	So $v(\Xi(\sigma))=0$
	if $\lambda_0 \neq 2$ and $v(\Xi(\sigma))=-3$ if $\lambda_0 = 2$. We deduce $m_\sigma(b_0) \leq 2$
	for all $b_0$; and that there are at most finitely many $b_0$ with $m_\sigma(b_0) \geq 2$ (that is, $m_\sigma(b_0) = 2$).
	
	It follows that
	\begin{equation}\label{456yy}
	w_n(\lambda_0) \leq 4
	\end{equation}
	if $\lambda_0 \neq 2$, and $w_n(2) \leq 2$; and that there are at most finitely many $\lambda_0$ with $w_m(\lambda_0) \geq 4$ (that is, $w_n(\lambda_0) = 4$).
	
	In turn this implies that the $C_n$ in (\ref{456sq}) are squarefree apart from at most finitely many squared factors $(\lambda-\lambda_0)^2$. We found no such $\lambda_0$ for $n=1,2,\ldots,20$.
	
	Next we use Theorem \ref{qi} for ${\cal K}={\C}(B)={\C}(\lambda,\mu)$ and $P=(\xi,\eta)=n(2,\mu)$, with $v$ corresponding to $(\lambda_0,\mu_0)$ as above. We find that
	$$v(\xi)=-w_n(\lambda_0)$$
	if $\lambda_0 \neq 2$ as in (\ref{456xx}). Also the logarithmic height of $\xi$ with respect to ${\C}(\lambda)$ is at most $n^2/2$, so with respect to ${\C}(\lambda,\mu)$ we get $h(\xi) \leq n^2$.
	
	As the genus $g=0$ and $h(E)=12$ we obtain
	$$w_n(\lambda_0) \leq \e n^2+12(2^{10000/\e^2})$$
	whenever $0<\e \leq 1/16$. Choosing say
	$$\e=\left({10000\log 2 \over \log n}\right)^{1/2}$$
	for $n \geq 2^{2560000}$, we get
	$$w_n(\lambda_0) \leq 84{n^2 \over (\log n)^{1/2}}+12n \leq 100{n^2 \over (\log n)^{1/2}}.$$
	This is a lot worse than (\ref{456yy}). However we looked at only the points $nP$ on $E({\cal K})$, whereas this group contains ${\bf Z}P+E[2]$. It is also conceivable that the rank exceeds 1, say $E({\cal K})={\bf Z}P+{\bf Z}Q+\cdots$, and then similar arguments would apply to $mQ$ or even $nP+mQ$, giving an explicit $o(n^2+m^2)$. But as mentioned, the determination of $E({\cal K})$ is not yet an effective procedure.
	
	We can allow ourselves an extra $Q$ simply by increasing the field to get
	$$P=(2,\sqrt{4-2\lambda}),~~~Q=(3,\sqrt{18-6\lambda}).$$
	Now $B$ is given by $\mu^2=4-2\lambda,~\nu^2=18-6\lambda$, and we have a second section $\tau$, and we apply Theorem \ref{Thm:Theorem2.6} to $n\sigma+m\tau$. Here $\Xi(\tau)={2\nu / (3-\lambda)^2}$ as in (\ref{456PF2}), and so $\Xi(n\sigma+m\tau)=\beta$ for
	$$\beta={2n\mu \over (2-\lambda)^2}+{2m\nu \over (3-\lambda)^2}.$$
	It is not hard to show that the order at any point $b$ of $B$ is bounded independently of $b,n,m$. In fact  one finds for ``generic'' $n,m$ that $h(\beta)=12$ so $h(\beta)\leq12$ for all $n,m$. Thus $v(\beta) \leq 12$ too, and the corresponding order of the abscissa (now a rational function of $\lambda,\mu,\nu$) is at most 14.
	
	Theorem \ref{qi} gives as above an explicit estimate $o(n^2+m^2)$; but again it can be applied to the full $E({\cal K})={\bf Z}P+{\bf Z}Q+\cdots$, whose generators may now be rather difficult to find.
	
	We finish by proving (\ref{456*a}) and that the 4 is best possible. From
	$$\Xi(x,-y)=\Xi(-(x,y))=-\Xi(x,y)$$
	(or (\ref{456PF}) directly) we see that $\Xi(x,y)$ is an odd function, and we calculate it as $\Upsilon/y^3$, where
	\begin{equation}\label{456de}
	\Upsilon=f_0D^2x+f(Dx)^2+f_1Dx+f_2 
	\end{equation}
	and $f_0,f,f_1,f_2$ are polynomials in $x$ of degrees 3,2,3,4 respectively, with coefficients in ${\bf Z}[\lambda]$. Now (\ref{456*a}) is clear (and the right-hand side of (\ref{456de}) provides a differential equation vanishing at the abscissae of all points of finite order at least 3).
	
	To see that the factor 4 is best possible, we write
	$$\Xi(x,y)^2={\Upsilon^2 \over x^3(x-1)^3(x-\lambda)^3}$$
	and we calculate this for abscissae $x=\xi=\lambda^d+6\lambda+70 ~(d \geq 2)$ and corresponding ordinate $y=\eta$. We find
	\begin{equation}\label{456rat}
	{P_d(\lambda) \over (\lambda^d+6\lambda+70)^3(\lambda^d+6\lambda+69)^3(\lambda^d+5\lambda+70)^3} 
	\end{equation}
	where $P_d$ is in ${\bf Z}[\lambda]$ with leading term $4(d-1)^4\lambda^{8d}$. Thus a lower bound
	$$h(\Xi(\xi,\eta)) \geq 4dh(\lambda)-O(1) \geq 4h(\xi)-O(1)$$
	will follow as soon as the numerator and denominator in (\ref{456rat}) have no common zeroes.
	
	Now we find that the numerator
	$$P_d(\lambda)={P(d,\lambda,\lambda^d) \over \lambda^2}$$
	for a fixed polynomial $P$. If this had a zero in common with say $\lambda^d+6\lambda+70$ in the denominator, then so would $P(d,\lambda,-6\lambda-70)$. This is in ${\bf Z}[\lambda]$ with leading term
	$$5184(d-1)^4\lambda^8.$$
	So the zero $\lambda$ would have to be of degree at most 8 over $\bf Q$. However by Eisenstein with prime 2 we see that it has degree $d$. Therefore when $d \geq 9$ there is no such common zero.
	
	Similar arguments work with the other factors $\lambda^d+6\lambda+69$ and $\lambda^d+5\lambda+70$ in the denominator, for which the Eisenstein primes 3 and 5 suffice. We get leading terms
	$$5184(d-1)^4\lambda^8,~~2500(d-1)^4\lambda^{10}$$
	but in the second case there is a factor $\lambda^2$. So again when $d \geq 9$ there is no common zero.
	
\end{section}

\bigskip

\end{document}